\newcommand{\Blue}{\textcolor{black}}
\newcommand{\Green}{\textcolor{black}}
\newtheorem{theorem}{Theorem}[section]
\newtheorem{lemma}[theorem]{Lemma}
\newtheorem{corollary}[theorem]{Corollary}
\newtheorem{proposition}[theorem]{Proposition}
\newtheorem{observation}[theorem]{Observation}
\newcommand{\cC}{{\mathcal C}}
\newcommand{\cF}{{\mathcal F}}
\newcommand{\cL}{{\mathcal L}}
\newcommand{\cN}{{\mathcal N}}
\newcommand{\cP}{{\mathcal P}}
\newcommand{\cR}{{\mathcal R}}
\newcommand{\cS}{{\mathcal S}}
\newcommand{\cT}{{\mathcal T}} 
\newcommand{\rSPR}{\mathrm{rSPR}}
\title[]{Hypercubes and \Blue{Hamilton} Cycles of Display Sets of Rooted Phylogenetic Networks}
\author{Janosch D\"ocker, Simone Linz, and Charles Semple} 
\thanks{We thank the New Zealand Marsden Fund for their financial support.}
\address{Department of Computer Science, University of T\"ubingen, Germany}
\curraddr{School of Computer Science, University of Auckland, Auckland, New Zealand}
\email{janosch.doecker@auckland.ac.nz}
\address{School of Computer Science, University of Auckland, Auckland, New Zealand}
\email{s.linz@auckland.ac.nz}
\address{School of Mathematics and Statistics, University of Canterbury, Christchurch, New Zealand}
\email{charles.semple@canterbury.ac.nz}
\keywords{Display set, Gray code, hypercube, \Blue{Hamilton} cycle, phylogenetic network, subtree prune and regraft.}
\date{\today}
\begin{document}

\begin{abstract}
In the context of reconstructing phylogenetic networks from a collection of phylogenetic trees, several characterisations and subsequently algorithms have been established to reconstruct a phylogenetic network that collectively embeds all trees in the input in some minimum way. For many instances however, the resulting  network also embeds additional phylogenetic trees that are not part of the input. However, little is known about these inferred trees. In this paper, we explore the relationships among all phylogenetic trees that are embedded in a given phylogenetic network. First, we investigate some combinatorial properties  of the collection $\cP$ of all rooted binary phylogenetic trees that are embedded in a rooted binary phylogenetic network $\cN$. To this end, we associated a particular graph $G$, which we call rSPR graph, with the elements in $\cP$ and show that, if $|\cP|=2^k$, where $k$ is the number of vertices with in-degree two in $\cN$, then $G$ has a \Blue{Hamilton} cycle. Second, by exploiting rSPR graphs and properties of hypercubes, we turn to the well-studied class of rooted binary level-$1$ networks and give necessary and sufficient conditions for when a set of rooted binary phylogenetic trees can be embedded in a level-$1$ network without inferring any additional trees. Lastly, we show how these conditions translate into a polynomial-time algorithm to reconstruct such a network if it exists.
\end{abstract}

\maketitle

\section{Introduction}
Phylogenetic networks, which are used to represent treelike and non-treelike ancestral relationships between a set of present-day species, generalise  phylogenetic trees by allowing for cycles in the underlying graph. In the case of rooted phylogenetic networks, vertices with in-degree at least two, called {\it reticulations}, represent non-treelike events such as hybridisation or lateral gene transfer that cannot be represented by a single rooted phylogenetic tree, whereas vertices with in-degree one represent treelike speciation events. Software to reconstruct phylogenetic networks frequently uses molecular sequence data or a collection of conflicting phylogenetic trees as input~\cite{allman19,barrat22, huson12, iersel22, solis16}. If a phylogenetic network $\cN$ is reconstructed from a set of phylogenetic trees such that $\cN$ embeds each tree of the input, then it may be necessary for $\cN$ to not only embed the input, but also a number of additional phylogenetic trees. For example, referring to the two rooted phylogenetic trees $\cT_1$ and $\cT_2$ that are shown in Figure~\ref{fig:honeycomb}, every rooted phylogenetic network that embeds $\cT_1$ and $\cT_2$ has at least two reticulations and embeds at least one tree that is distinct from $\cT_1$ and $\cT_2$.  \Blue{In Figure~\ref{fig:honeycomb} and, in fact, in all figures of this paper, arcs of rooted phylogenetic networks are directed down the page and arrowheads are omitted.} Clearly, if the input consists of all rooted binary phylogenetic trees on a fixed leaf set, then no rooted phylogenetic network that embeds each tree in the input infers any additional tree. Such networks are called universal networks and exist, for example, for the class of tree-based networks~\cite{francis15,zhang16}. However, for more structurally restricted network classes such as level-$1$ or tree-child networks whose number of reticulations is bounded linearly in the number of leaves~\cite{mcdiarmid15}, no universal network exist. Moreover, in practice, one is often interested in a  subset of all rooted binary phylogenetic trees on a fixed leaf set. It is consequently more realistic to ask which collections of phylogenetic trees can be embedded in a network without inferring any additional tree? In this paper, we approach this  question from two angles.

First, we investigate the relationships among all rooted binary phylogenetic trees that are embedded in a given rooted binary phylogenetic network $\cN$. We refer to the set of all such trees as the {\it display set} (formally defined in the next section) of $\cN$. It is well-known that the size of the display set of a rooted binary phylogenetic network with exactly $k$ reticulations is at most $2^k$ and that this bound is sharp, such as for normal networks~\cite{iersel10,willson12}. However, not all rooted phylogenetic networks with $k$ reticulations have a display set of size $2^k$. An example is shown in Figure~\ref{fig:honeycomb}. To explore the relationships among the elements of a display set $\cP$ of a rooted phylogenetic network, we associate an undirected graph with $\cP$.  Referring to this graph as the {\it rSPR graph} of $\cP$, its vertex set is $\cP$ and two vertices are connected by an edge precisely if they are one rooted subtree prune and regraft (rSPR) operation~\cite{hein96} apart.~The rSPR operation induces a metric on the space of all rooted binary phylogenetic trees with a fixed leaf set. It is used to compare pairs of phylogenetic trees and to search for an optimum tree in tree space~\cite{allen01,bordewich05,goloboff08,stjohn17}. We show that the rSPR graph of a display set of a rooted binary phylogenetic network is always connected. In turn this implies that, if the rSPR graph of an arbitrary collection of rooted binary phylogenetic trees is not connected, then any rooted phylogenetic network that embeds each tree in the collection infers additional phylogenetic trees. Moreover, if $\cP$ is the display set of a rooted binary phylogenetic network with $k$ reticulations and has size $2^k$, then its rSPR graph $G$ has a \Blue{Hamilton} cycle. Hence,  in the spirit of~\cite{gordon13}, it is possible to systematically traverse $G$, thereby visiting each element in $\cP$ exactly once.

Second, we turn to level-$1$ networks that are phylogenetic networks whose underlying cycles do not intersect. We characterise when a set $\cP$ of rooted binary phylogenetic trees is the display set of a rooted binary level-$1$ network, in which case it is possible to reconstruct such a network that embeds each tree in $\cP$ and does not infer any additional tree. Our characterisation again employs rSPR graphs and establishes necessary and sufficient conditions for $\cP$  to be the display set of  a rooted binary level-$1$ network. To provide a flavour of the characterisation, a necessary condition is that the rSPR graph of $\cP$ is isomorphic to the $k$-dimensional hypercube for some non-negative integer $k$. Although hypercubes have previously been used in research on phylogenetic trees (e.g. in the context of Buneman graphs and maximum parsimony~\cite[Section 5.5]{semple03} as well as in developing a  lower bound on the minimum number of reticulations needed to explain a collection of conflicting phylogenetic trees~\cite{wu10}), their application to studying the display set of a phylogenetic network is new to this paper. Subsequent to the characterisation, we present a polynomial-time algorithm to decide whether or not $\cP$ is the display set of a rooted binary level-$1$ network and, if so, to reconstruct such a network. This algorithm can also be easily modified to enumerate all rooted binary level-$1$ network whose display set is $\cP$. Relatedly, there exists earlier work~\cite{huynh05,simpson} on reconstructing a (single) rooted binary level-$1$ network whose number of reticulations is minimised over all rooted binary level-$1$ networks whose display set is a superset of $\cP$, where the focus in~\cite{huynh05} is on the case $|\cP|=2$. Although these earlier algorithms can potentially be exploited further to also decide if there exists a rooted binary level-$1$ network whose display set is $\cP$, the purpose of the present paper is to demonstrate the applicability of rSPR graphs to studying display sets of rooted binary phylogenetic networks. Indeed, we expect that rSPR graphs will be used in the future to investigate related questions that go beyond level-$1$ networks.

The remainder of the paper is organised as follows. Section~\ref{sec:prelim} provides definitions and terminology that is used in the subsequent sections. We then establish basic properties of rSPR graphs in Section~\ref{sec:properties} and, in particular, hamiltonicity of any rSPR graph with $2^k$ vertices whose underlying collection of rooted binary phylogenetic trees is the display set of a rooted phylogenetic network with $k$ reticulations in Section~\ref{sec:maximum}. In Section~\ref{sec:level-1}, we use rSPR graphs to characterise when a set $\cP$ of rooted binary phylogenetic trees is the display set of a rooted level-$1$ network. Lastly, in Section~\ref{sec:level-1-algo}, we show that it takes polynomial time to decide if the necessary and sufficient conditions established in Section~\ref{sec:level-1} are satisfied and, if so, to reconstruct a level-$1$ network whose display set is $\cP$.

\begin{figure}[t]
\center
\scalebox{1}{\input{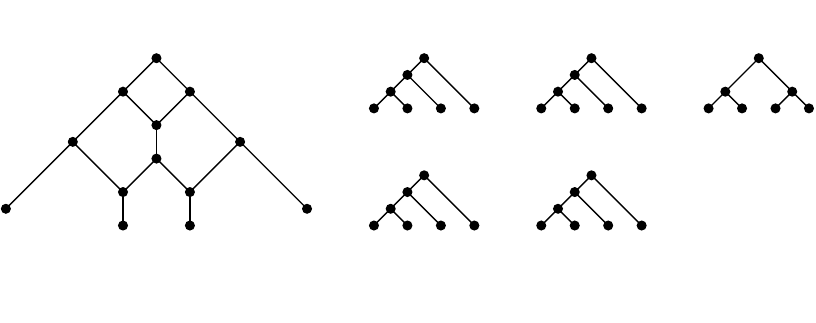_t}}
\caption{A phylogenetic network with three reticulations whose display set consists of the five phylogenetic trees shown on the right.}
\label{fig:honeycomb}
\end{figure}

\section{Preliminaries}\label{sec:prelim}

This section gives definitions and terminology on phylogenetic trees and networks as well as on hypercubes that is used in the following sections. Throughout this paper, $X$ denotes a non-empty finite set.

\noindent {\bf Phylogenetic networks.} A {\em rooted binary phylogenetic network $\cN$ on $X$} is a rooted acyclic directed graph with no parallel arcs or loops that satisfies the following three properties:
\begin{enumerate}[(i)]
\item the (unique) root has in-degree zero and out-degree two;
\item a vertex of out-degree zero has in-degree one, and the set of vertices with out-degree zero is $X$; and
\item all other vertices either have in-degree one and out-degree two, or in-degree two and out-degree one.
\end{enumerate}
For technical reasons, if $|X|=1$, then we additionally allow $\cN$ to consist of the single vertex in $X$. Let $v$ be a vertex of $\cN$. If $v$ has out-degree zero, then $v$ is  called a {\em leaf}, and  $X$ is referred to as the {\em leaf set} of $\cN$. Furthermore, if $v$ has in-degree one and out-degree two, $v$ is referred to as a {\em tree vertex}, and if it has in-degree two and out-degree one, $v$ is referred to as a {\em reticulation}. For an arc $(u,v)$ of $\cN$, we say that $u$ is a {\it parent} of $v$ and, equivalently, that $v$ is a {\it child} of $u$. Also, if $v$ is a reticulation, then $(u,v)$ is called a {\it reticulation arc}. Lastly, if $u$ is a vertex of $\cN$, then $C_{\cN}(u)$ denotes the subset of $X$ whose elements $x$ have the property that there is a directed path from $u$ to $x$ in $\cN$. Such a subset of $X$ is referred to as a {\em cluster} of $\cN$ and we denote the set of clusters of $\cN$ by $C(\cN)$. Note that each element in $X$ is a cluster of $\cN$. If there is no ambiguity, we sometimes refer to $C_{\cN}(u)$ as $C(u)$.

We next consider different classes of phylogenetic networks that are well known in the literature. For an excellent overview on the different classes, we refer the interested reader to Kong et al.~\cite{kong} (and references therein). Let $\cN$ be a rooted binary phylogenetic network on $X$, and let $e=(u,v)$ be a reticulation arc of $\cN$. Then $e$ is called a {\it shortcut} of $\cN$ if there exists a directed path from $u$ to $v$ that avoids $e$. Now, if each non-leaf vertex of $\cN$ has a child that is a tree vertex or leaf, then $\cN$ is a {\it tree-child} network. Moreover, if $\cN$ is tree-child and does not contain a shortcut, then $\cN$ is a {\it normal} network. With a view towards the underlying cycles of $\cN$, we say that $\cN$ is a {\it level-$1$} network if no two underlying cycles of $\cN$ have a common vertex. Lastly, a {\em rooted binary phylogenetic $X$-tree} is a rooted binary phylogenetic network on $X$ with no reticulations. In what follows, we will refer to a rooted binary phylogenetic network and a rooted binary phylogenetic tree as a {\it phylogenetic network} and a {\it phylogenetic tree}, respectively, since all such networks and trees in this paper are rooted and binary.

We next define three types of subtrees of a phylogenetic $X$-tree $\cT$. Let $V$ be a subset of the vertices of $\cT$. First, we write $\cT(V)$ to denote the minimal rooted subtree of $\cT$ that connects all elements in $V$. Second, the {\it restriction of $\cT$ to $V$}, denoted by $\cT|V$, is the rooted phylogenetic tree obtained from $\cT(V)$ by suppressing each vertex  with in-degree one and out-degree one. In what follows,  $V$ is typically a subset of $X$. Third, a subtree of $\cT$ is called {\it pendant} if it can be detached from $\cT$ by deleting a single arc.

Now, let $\cN$ be a phylogenetic network on $X$ with $k$ reticulations, and let $\cT$ be a phylogenetic $X$-tree. We say that $\cT$ is {\it displayed} by $\cN$ if  there exists a subgraph of $\cN$ that is a subdivision of $\cT$. For a set $\cP$ of phylogenetic $X$-trees, we say that $\cN$ {\it displays} $\cP$ if each element in $\cP$ is displayed by $\cN$. Moreover the set of all phylogenetic $X$-trees that are displayed by $\cN$ is called the {\it display set} of $\cN$ and denoted by $T(\cN)$. Since the in-degree of each reticulation is two, it immediately follows that $|T(\cN)|\leq 2^k$. Furthermore, we say that $T(\cN)$ is {\it maximum} if $|T(\cN)|=2^k$. The class of phylogenetic networks whose display set is maximum strictly contains the class of normal networks~\cite{iersel10,willson12}. However, not every phylogenetic network $\cN$ with $k$ reticulations has a display set of size $2^k$. An example is shown in Figure~\ref{fig:honeycomb}. Moreover, a sufficient but not necessary condition for a phylogenetic network with $k$ reticulations to display strictly less than $2^k$ phylogenetic trees is the existence of an arc that is incident with two reticulations.

Let $\cN$ and $\cN'$ be two phylogenetic networks on $X$ with vertex and arc sets $V$ and $E$, and $V'$ and $E'$, respectively. Then $\cN$ and $\cN'$ are {\it isomorphic} if there is a bijection $\psi :V\rightarrow V'$ such that $\psi(x)=x$ for all $x\in X$, and $(u,v) \in E$ if and only if $(\psi(u),\psi(v)) \in E'$ for all $u,v\in V$. If $\cN$ and $\cN'$ are isomorphic, we write $\cN\cong \cN'$ and, otherwise, we write $\cN\ncong \cN'$.

\noindent {\bf rSPR and agreement forests.} Let $\cT$ be a phylogenetic $X$-tree. For the purposes of the upcoming definitions, we view the root of $\cT$ as a vertex $\rho$ adjoined to the original root by a pendant arc. Furthermore, we regard $\rho$ as part of the label set of $\cT$, that is, $\cL(\cT )=X\cup\{\rho\}$. Let $e=(u,v)$ be an arc of $\cT$ that is not incident with $\rho$. Let $\cT'$ be a phylogenetic $X$-tree obtained from $\cT$ by deleting $e$ and reattaching the resulting rooted subtree that contains $v$ via a new arc $f$ in the following way: Subdivide an arc of the component that contains $\rho$ with a new vertex $u'$, join $u'$ and $v$ with $f$, and suppress $u$. We say that $\cT'$ has been obtained from $\cT$ by a {\it rooted subtree prune and regraft} (rSPR) operation. The {\it rSPR distance}  between any two phylogenetic $X$-trees $\cT$ and $\cT'$, denoted by $d_\rSPR(\cT,\cT')$, is the minimum number of $\rSPR$ operations that transform $\cT$ into $\cT'$. It is well known that  $\cT'$ can always be obtained from $\cT$ by a sequence of single rSPR operations and, so, the distance is well defined.

Now, let $\cT$ and $\cT'$ be two phylogenetic $X$-trees.  An {\it agreement forest} $\cF=\{\cL_\rho,\cL_1,\ldots,\cL_k\}$ for $\cT$ and $\cT'$ is a partition of $X\cup\{\rho\}$ such that $\rho\in \cL_\rho$ and the following two properties are satisfied.

\begin{enumerate}[(i)]
\item  For each $i\in \{\rho,1,2,\ldots,k\}$, we have $\cT|\cL_i\cong \cT'|\cL_i$.
\item The trees in $\{\cT(\cL_i): i\in \{\rho,1,2,\ldots,k\}\}$ and 
$\{\cT'(\cL_i): i\in \{\rho,1,2,\ldots,k\}\}$ are vertex-disjoint subtrees of $\cT$ and $\cT'$, respectively. 
\end{enumerate}
An agreement forest for $\cT$ and $\cT'$ is a {\em maximum agreement forest} if it has the smallest number of elements amongst all agreement forests for $\cT$ and $\cT'$. The following theorem links the rSPR distance between two phylogenetic trees and the size of a maximum agreement forest for the same trees. 

\begin{theorem}\cite{bordewich05}
Let $\cT$ and $\cT'$ be two rooted phylogenetic $X$-trees, and let $\cF$ be a maximum agreement forest for $\cT$ and $\cT'$. Then $d_\rSPR(\cT,\cT')=|\cF|-1$.
\end{theorem}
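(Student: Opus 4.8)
The plan is to prove the two inequalities $d_\rSPR(\cT,\cT')\le |\cF|-1$ and $d_\rSPR(\cT,\cT')\ge |\cF|-1$ separately. Throughout I write $m(\cS,\cS')$ for the number of blocks in a maximum agreement forest for two phylogenetic $X$-trees $\cS$ and $\cS'$, so that the claim becomes $d_\rSPR(\cT,\cT')=m(\cT,\cT')-1$, where $m(\cT,\cT')=|\cF|$.

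For the upper bound I would argue by induction on $k=|\cF|-1$ that $\cT$ can be transformed into $\cT'$ using at most $k$ rSPR operations. When $k=0$, the single block $\cL_\rho=X\cup\{\rho\}$ forces $\cT|\cL_\rho\cong\cT'|\cL_\rho$, hence $\cT\cong\cT'$ and the distance is $0$. For the inductive step I would use the agreement forest to locate a block $\cL_j$ with $j\ne\rho$ that is \emph{ready to be moved}: a block such that pruning the pendant subtree of $\cT$ with leaf set $\cL_j$ and regrafting it at the position dictated by $\cT'$ is a single legal rSPR operation. The point is that after this operation $\cL_j$ can be merged with an adjacent block, producing a pair $\cT_1,\cT'$ admitting an agreement forest with only $k$ blocks, whence $m(\cT_1,\cT')\le k$; induction then gives $d_\rSPR(\cT,\cT')\le 1+(k-1)=k$. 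The content here is confirming that such a movable block always exists and that the regraft reduces the block count while leaving the remaining blocks of $\cF$ intact as an agreement forest.

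For the lower bound the crux is a key lemma: if $\cT_2$ is obtained from $\cT_1$ by a single rSPR operation, then
\[
|m(\cT_1,\cT')-m(\cT_2,\cT')|\le 1.
\]
Granting this, I fix a shortest sequence $\cT=\cT_0,\cT_1,\dots,\cT_d=\cT'$ of rSPR operations with $d=d_\rSPR(\cT,\cT')$ and telescope:
\[
m(\cT,\cT')-1=m(\cT_0,\cT')-m(\cT_d,\cT')=\sum_{i=0}^{d-1}\bigl(m(\cT_i,\cT')-m(\cT_{i+1},\cT')\bigr)\le d,
\]
using that $m(\cT',\cT')=1$ (the whole label set is a single block) and that each summand is at most $1$. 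This yields $d_\rSPR(\cT,\cT')\ge m(\cT,\cT')-1=|\cF|-1$. To prove the key lemma I would start from a maximum agreement forest $\cF'$ for $(\cT_2,\cT')$ and reconstruct an agreement forest for $(\cT_1,\cT')$. Since $\cT_1$ and $\cT_2$ differ only in the attachment point of one pendant subtree, say on leaf set $Y$, the blocks of $\cF'$ are almost compatible with $\cT_1$ as well; the only obstruction is the single block of $\cF'$ that straddles the old and the new attachment location. Splitting that one block into two repairs the vertex-disjointness condition~(ii) for $\cT_1$ while preserving condition~(i), giving an agreement forest for $(\cT_1,\cT')$ with at most $m(\cT_2,\cT')+1$ blocks; by symmetry the same bounds $m(\cT_2,\cT')$ in terms of $m(\cT_1,\cT')$.

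The main obstacle is this key lemma, and within it the verification that a single additional cut always suffices: one must analyse how the regrafted subtree on leaf set $Y$ sits relative to the components of $\cF'$ and confirm that splitting exactly one component restores property~(ii) for $\cT_1$ without violating property~(i). Some care is also needed in the upper bound to guarantee that a movable block exists at each stage and that the regraft is a genuine rSPR operation rather than an edge move producing a non-binary vertex, but I expect the induction there to be routine once the correct block is selected.
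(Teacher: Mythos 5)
The paper does not prove this statement; it is imported verbatim from the cited reference (Bordewich and Semple, 2005), so there is no in-paper proof to compare against. Your outline is, in substance, the standard argument from that reference: the upper bound by induction, merging one block of the agreement forest per rSPR move, and the lower bound by telescoping the lemma that a single rSPR changes the maximum-agreement-forest size by at most one (splitting the unique block that meets both the moved leaf set $Y$ and its complement --- uniqueness holds because any such block's induced subtree must contain the root of the pruned subtree). The one point worth stressing is in your upper bound: a block $\cL_j$ of an agreement forest need not be the leaf set of a pendant subtree of $\cT$ (its induced subtree $\cT(\cL_j)$ can contain vertices with descendants outside $\cL_j$), so the phrase ``the pendant subtree of $\cT$ with leaf set $\cL_j$'' presupposes exactly what must be shown; the existence of a block that \emph{is} a cluster of $\cT$ follows by taking the block whose induced subtree has root at maximum depth in $\cT$ and using vertex-disjointness, after which the target of the regraft is the attachment position of that block, in $\cT'$, relative to the first other block encountered above it. With that verification supplied, the plan goes through.
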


Let $\cP$ be a set of phylogenetic $X$-trees. The {\it rSPR graph of $\cP$} is the graph $G=(V,E)$ with $V=\cP$ and for which $\{\cT,\cT'\}$ is an edge in $E$ precisely if $d_\rSPR(\cT,\cT')=1$.
Let $\cN$ be a phylogenetic network. For ease of reading, we often refer to the rSPR graph of $T(\cN)$ as the {\it rSPR graph of $\cN$}. To illustrate, Figure~\ref{fig:rSPR-graph} shows two phylogenetic networks with their rSPR graphs.

\begin{figure}[t]
\center
\scalebox{1}{\input{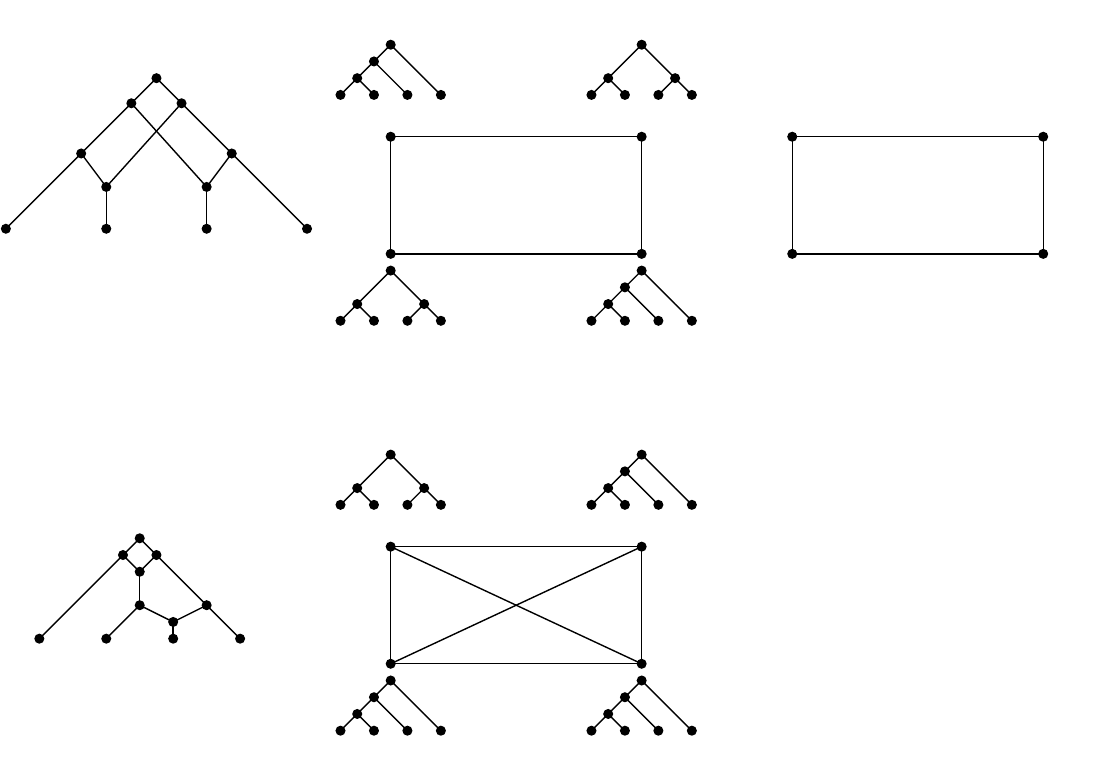_t}}
\caption{Two phylogenetic networks $\cN$ and $\cN'$ with their rSPR graphs $G$ and $G'$, respectively. For each $2$-bit string $s$,  $\cT_s$ denotes the phylogenetic tree in $T(\cN)$ that is encoded by $s$  under the ordered binary assignment for $\cN$ as indicated by $v_1$, $v_2$, and the assignment of 0 or 1 to each reticulation arc of $\cN$.}
\label{fig:rSPR-graph}
\end{figure}

\noindent {\bf Gray codes and hypercubes.} Let $k$ be a non-negative integer, and let $n=2^k$. We refer to a string $s$ as a {\it $k$-bit string} if $s$ has length $k$ and each bit of $s$ is either $0$ or $1$. For two $k$-bit strings $s$ and $s'$, we denote the Hamming distance between $s$ and $s'$ by $d(s,s')$. Furthermore, an ordering $(s_1,s_2,\ldots,s_{n})$ on all $k$-bit strings is called a {\it (cyclic) Gray code} if $d(s_1,s_{n})=1$ and, for each $j\in\{1,2,\ldots,n-1)$, $d(s_j,s_{j+1})=1$~\cite{gray53}. It is well known that \Blue{such an ordering on $(s_1,s_2,\ldots,s_{n})$ exists (see, for example,~\cite{grimaldi,muetze22})}.

Let $k$ be a non-negative integer, and let $B$ be the set of all $k$-bit strings. If $k=0$, then the only element in $B$ is the empty string. The {\it $k$-dimensional hypercube $Q_k$} is the undirected graph whose vertex set is $B$ and for which $\{s,s'\}$ is an edge in $Q_k$ precisely if $d(s,s')=1$. Observe that the number of edges in $Q_k$ is $2^{k-1}k$. Moreover, the edge set of $Q_k$ can naturally be partitioned into $k$ sets $E_1,E_2,\ldots,E_k$ such that, for each $i\in\{1,2,\ldots,k\}$, we have $|E_i|=2^{k-1}$ and each edge $\{s,s'\}$ of $Q_k$ is an element of $E_i$ if and only if the $i$-th bit of $s$ and the $i$-th bit of $s'$ are not the same. We refer to $E_i$ as the $i$-th {\it bit edge subset} of $Q_k$. By way of example, $Q_2$ is shown in Figure~\ref{fig:rSPR-graph}, where $E_1$ contains the two vertical edges and $E_2$ contains the two horizontal edges.

We end this section with a well-known theorem whose proof is straightforward~\cite{muetze22}, and that establishes an equivalence between finding a Gray code for all $k$-bit strings and finding a \Blue{Hamilton} cycle of $Q_k$.

\begin{theorem}\label{t:code-iff-cube}
Let $k$ be an integer with $k\geq 2$, and let $n=2^k$. Furthermore, let $C=(s_1,s_2,\ldots,s_n)$ be an ordering on all $k$-bit strings. Then $C$ is a Gray code if and only if $$\{\{s_1,s_2\},\{s_2,s_3\},\ldots,\{s_{n-1},s_n\},\{s_n,s_1\}\}$$ is the edge set of a \Blue{Hamilton} cycle of $Q_k$.
\end{theorem}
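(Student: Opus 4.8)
The plan is to unwind both definitions and observe that the equivalence rests entirely on the single fact that $\{s,s'\}$ is an edge of $Q_k$ precisely when $d(s,s')=1$, which is exactly how $Q_k$ is defined. Since $C=(s_1,s_2,\ldots,s_n)$ is an ordering on \emph{all} $k$-bit strings and there are exactly $n=2^k$ of them, the terms $s_1,\ldots,s_n$ are pairwise distinct and together exhaust the vertex set of $Q_k$. Thus the cyclic sequence $s_1,s_2,\ldots,s_n,s_1$ already visits every vertex of $Q_k$ exactly once, and whether or not it traces out a Hamilton cycle depends solely on whether each of its consecutive steps is an edge.

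First I would handle the forward direction. Assuming $C$ is a Gray code, the defining conditions give $d(s_j,s_{j+1})=1$ for all $j\in\{1,\ldots,n-1\}$ together with $d(s_n,s_1)=1$. By the edge characterisation of $Q_k$, each of the pairs $\{s_1,s_2\},\ldots,\{s_{n-1},s_n\},\{s_n,s_1\}$ is therefore an edge of $Q_k$. Because these edges join the consecutive terms of a cyclic arrangement of all vertices, they form the edge set of a Hamilton cycle, which completes this direction.

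For the reverse direction I would simply read the implications backwards. If the displayed collection is the edge set of a Hamilton cycle of $Q_k$, then in particular each listed pair $\{s_j,s_{j+1}\}$ and $\{s_n,s_1\}$ is an edge of $Q_k$, and so each has Hamming distance $1$. These are precisely the conditions $d(s_j,s_{j+1})=1$ and $d(s_n,s_1)=1$ defining a Gray code.

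The only point that needs care --- and the reason the hypothesis $k\ge 2$ is imposed --- is confirming that the $n$ listed pairs are genuinely \emph{distinct} edges forming a single cycle, rather than a repeated or degenerate collection. Since the $s_i$ are distinct, two pairs $\{s_i,s_{i+1}\}$ and $\{s_j,s_{j+1}\}$ (indices read cyclically) can coincide only if $j=i-1$ and $j=i+1$ simultaneously, forcing $n\mid 2$. As $k\ge 2$ gives $n=2^k\ge 4$, this cannot occur, so all $n$ edges are distinct and the cyclic arrangement is a bona fide Hamilton cycle; for $k=1$ the two listed pairs collapse to a single edge, which is exactly why the statement excludes that case. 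This is the main --- and essentially the only --- obstacle, as everything else is a direct translation between the Hamming-distance conditions and the edge relation of $Q_k$.
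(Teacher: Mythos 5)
Your proof is correct. The paper itself gives no proof of this theorem --- it is stated as a well-known fact whose proof is straightforward, with a citation to the Gray code survey of M\"utze --- so there is nothing to compare against beyond the standard unwinding of definitions, which is exactly what you carry out: the Gray code conditions coincide term-by-term with the requirement that consecutive pairs be edges of $Q_k$, and since the $s_i$ are distinct and exhaust the vertex set, the cyclic sequence is a Hamilton cycle precisely when all those pairs are edges. Your closing observation about why $k\ge 2$ is needed (for $k=1$ the two listed pairs collapse to a single edge, so the collection is not the edge set of a cycle) is the one genuinely non-mechanical point, and you handle it correctly.
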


\section{Properties of  rSPR Graphs}\label{sec:properties}

Let $\cN$ be a phylogenetic network, and let $R=\{v_1,v_2,\ldots,v_k\}$ be the set of reticulations in $\cN$. For each $i\in\{1,2,\ldots,k\}$, let $u_i$ and $u_i'$ be the two parents of $v_i$ in $\cN$. Furthermore, let $$\phi: \{(u_i,v_i),(u_i',v_i): i\in\{1,2,\ldots,k\}\}\rightarrow\{0,1\}$$ be a map that assigns either $0$ or $1$ to each reticulation arc such that $$\{\phi((u_i,v_i)),\phi((u_i',v_i))\}=\{0,1\}$$ for each $v_i\in R$.  We refer to $\phi$ as a {\it binary assignment} for $\cN$. Moreover, $(u_i,v_i)$ is called the {\it $1$-arc of $v_i$ under $\phi$} if $\phi(u_i)=1$ and, otherwise, $(u_i,v_i)$ is called the {\it $0$-arc of $v_i$ under $\phi$}. This definition extends in the obvious way to $(u_i',v_i)$. 

Now, let $\cN$ be a phylogenetic network on $X$, and let $\phi$ be a binary assignment for $\cN$. Let $R$ be the set of reticulations in $\cN$, and let $|R|=k$. Fix an ordering on the elements in $R$, say $(v_1,v_2,\ldots,v_k)$. Let $s$ be a $k$-bit string, and let $S$ be the directed spanning tree of $\cN$ such that, for each $i\in\{1,2,\ldots,k\}$, $S$ uses the $1$-arc of $v_i$ under $\phi$ if the $i$-th bit of $s$ is $1$ and $S$ uses the $0$-arc of $v_i$ under $\phi$ if the $i$-th bit of $s$ is $0$. Furthermore, let $\cT_s$ denote the phylogenetic $X$-tree that is obtained from $S$ by repeatedly suppressing vertices of in-degree one and out-degree one, deleting vertices with out-degree zero that are not in $X$, and deleting vertices with in-degree zero and out-degree one. Note that the last operation of deleting a vertex with in-degree zero and out-degree one is, for example, necessary for $\cT_s$ to be a phylogenetic tree if $\cN$ has an underlying $3$-cycle that contains the root and $S$ contains the unique reticulation arc of the $3$-cycle that is not incident with the root. We say that {\it $s$ encodes $\cT_s$ under $\phi$}. By construction, $\cT_s\in T(\cN)$. Each $k$-bit string encodes a unique element in $T(\cN)$ under $\phi$. Moreover two distinct $k$-bit strings may encode the same element in $\cT(\cN)$ under $\phi$. To illustrate, Figure~\ref{fig:rSPR-graph} shows, for each  $2$-bit string $s$,  the phylogenetic tree $\cT_s$ that is encoded under the binary assignment as indicated for the phylogenetic network $\cN$ shown in the same figure.

\noindent {\bf Notational remark.} Let $\cN$ be a phylogenetic network. Throughout this section and the next, we denote a binary assignment $\phi$  of $\cN$ and a fixed ordering $(v_1,v_2,\ldots,v_k)$ on the reticulations of $\cN$ with $k\geq 0$ by $$(\cN,\phi,(v_1,v_2,\ldots,v_k)).$$ Furthermore, we refer to $(\cN,\phi,(v_1,v_2,\ldots,v_k))$ as an {\it ordered binary assignment} of $\cN$.

\begin{lemma}\label{l:hamming1}
Let $\cN$ be a phylogenetic network on $X$, and let $s$ and $s'$ be two $k$-bit strings such that $d(s,s')=1$. Furthermore, let $\cT_s$ and $\cT_{s'}$ be the two phylogenetic $X$-trees that are encoded by $s$ and $s'$ under $\phi$, respectively. Then $d_\rSPR(\cT_s,\cT_{s'})\leq 1$. 
\end{lemma}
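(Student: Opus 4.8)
The plan is to show that flipping a single bit of $s$ only changes which of the two reticulation arcs at one reticulation is retained in the associated spanning tree, and that this change descends to at most one rSPR operation on the cleaned-up trees. Assume without loss of generality that $s$ and $s'$ differ precisely in the $j$-th bit, which is $0$ in $s$ and $1$ in $s'$. Let $(u_j,v_j)$ be the $1$-arc and $(u_j',v_j)$ the $0$-arc of $v_j$ under $\phi$, and let $S$ and $S'$ be the directed spanning trees of $\cN$ associated with $s$ and $s'$. Since $s$ and $s'$ agree on every bit except the $j$-th, the trees $S$ and $S'$ use the same reticulation arc at every $v_i$ with $i\neq j$; hence they have identical arc sets except that $S$ contains $(u_j',v_j)$ while $S'$ contains $(u_j,v_j)$. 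In particular, the portion of each spanning tree at and below $v_j$ is literally the same rooted subtree in both $S$ and $S'$, and the two spanning trees differ only in where this subtree is attached.

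My main approach is to exhibit an agreement forest with two blocks and then apply the theorem of Bordewich and Semple relating the rSPR distance to the size of a maximum agreement forest. Let $Y\subseteq X$ be the set of leaves lying on a directed path from $v_j$ in $S$; because the arc choices below $v_j$ coincide in $S$ and $S'$, this set is the same whether computed in $S$ or in $S'$. I would set $\cL_1=Y$ and $\cL_\rho=(X\cup\{\rho\})\setminus Y$ and claim that $\cF=\{\cL_\rho,\cL_1\}$ is an agreement forest for $\cT_s$ and $\cT_{s'}$. For condition (i), I would argue that $\cT_s|Y$ and $\cT_{s'}|Y$ are both obtained by cleaning up the common subtree of $S$ (and $S'$) rooted at $v_j$, and are therefore isomorphic; and that $\cT_s|\cL_\rho$ and $\cT_{s'}|\cL_\rho$ are both obtained by deleting that subtree and cleaning up. The crucial point is that the two resulting graphs coincide: the only arc in which $S$ and $S'$ differ is the arc into $v_j$, and that arc is deleted in both cases, so the common subgraph obtained by removing $v_j$ together with its subtree from either $S$ or $S'$ is the same, whence $\cT_s|\cL_\rho\cong\cT_{s'}|\cL_\rho$. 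For condition (ii), I would use that $Y$ is a cluster of both $\cT_s$ and $\cT_{s'}$: in each spanning tree $v_j$ has in-degree one and out-degree one, so after suppression its subtree survives as a pendant clade, which makes $\cT_s(Y)$ and $\cT_s(\cL_\rho)$ vertex-disjoint, and likewise for $\cT_{s'}$.

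It remains to treat the degenerate case $Y=\emptyset$ separately. Here the subtree below $v_j$ contains no element of $X$, so cleanup deletes $v_j$ and its entire subtree in both $S$ and $S'$; since the two spanning trees agree everywhere else, the resulting trees satisfy $\cT_s=\cT_{s'}$ and $d_\rSPR(\cT_s,\cT_{s'})=0$. When $Y\neq\emptyset$, the set $\cF$ above is an agreement forest with $|\cF|=2$, so a maximum agreement forest has at most two blocks, and the cited theorem gives $d_\rSPR(\cT_s,\cT_{s'})\leq |\cF|-1\leq 1$, as required.

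I expect the main obstacle to be the bookkeeping around the three cleanup operations --- suppressing degree-two vertices, deleting non-label sinks, and deleting the in-degree-zero source that can appear at the root. Concretely, the two steps needing care are (a) showing that $Y$ really is a cluster of each cleaned tree, so that restricting to $Y$ and to $\cL_\rho$ splits the tree into vertex-disjoint pieces, and (b) showing that restriction commutes with cleanup, that is, that $\cT_s|\cL_\rho$ equals the cleanup of $S$ with the $v_j$-subtree removed rather than some different tree. Once these commutation facts are in place, conditions (i) and (ii) follow directly from the observation that $S$ and $S'$ share both their $v_j$-subtree and their upper part.
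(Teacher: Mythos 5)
Your proof is correct, but it takes a genuinely different route from the paper's. The paper argues at the level of networks: it deletes from $\cN$ every reticulation arc not used by $S$ or $S'$, cleans up to obtain a phylogenetic network $\cN'$ with the single reticulation $v_j$ that still displays both $\cT_s$ and $\cT_{s'}$, and then invokes Proposition~2 of Baroni et al.\ to conclude that two distinct trees displayed by a one-reticulation network are at rSPR distance one. You instead work at the level of trees, exhibiting the explicit two-part agreement forest $\{(X\cup\{\rho\})-Y,\,Y\}$, where $Y$ is the leaf set reachable from $v_j$ (the same in $S$ and $S'$ because the two spanning trees differ only in the arc \emph{into} $v_j$), and then applying the Bordewich--Semple maximum-agreement-forest theorem already stated in Section~2. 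Your approach is more self-contained --- it uses only a result the paper has already quoted, and it identifies concretely which pendant subtree is being pruned and regrafted, which foreshadows the ``moving subtree'' machinery of Section~5 --- at the cost of the bookkeeping you flag: checking that $Y$ is a cluster of both cleaned-up trees, that restriction commutes with the cleanup operations, and the degenerate cases $Y=\emptyset$ (where $\cT_s=\cT_{s'}$) and $Y=X$. These checks all go through: paths from the root to leaves outside $Y$ avoid $v_j$ entirely, so the upper parts of $S$ and $S'$ restrict to the same subgraph, and the subtrees rooted at $v_j$ coincide by construction. The paper's proof is shorter because it offloads exactly this verification to the cited external proposition.
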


\begin{proof}
Let $k$ be the number of reticulations in $\cN$. Throughout this proof, let $(\cN,\phi,(v_1,v_2,\ldots,v_k))$ be an ordered binary assignment of $\cN$. If $\cT_s\cong\cT_{s'}$, then the result clearly follows. We may therefore assume that $\cT_s\ncong\cT_{s'}$. Let $S$ (resp. $S'$) be \Blue{the} directed spanning tree of $\cN$ such that, for each $i\in\{1,2,\ldots,k\}$, $S$ (resp. $S'$) uses the $1$-arc of $v_i$ under $\phi$ if the $i$-th bit of $s$ (resp. $s'$) is 1 and, otherwise, $S$ (resp. $S'$) uses the $0$-arc of $v_i$ under $\phi$. As $d(s,s')=1$, there exists exactly one reticulation $v_j$ in $\cN$ such that one of $S$ and $S'$ uses the $1$-arc of $v_j$ under $\phi$ while the other uses the $0$-arc of $v_j$ under $\phi$. Now, obtain a directed acyclic graph $\cN'$ from $\cN$ by deleting each arc that is directed into a reticulation and not used by $S$ or $S'$, and subsequently, applying any of the following three operations until no further operation is possible.
\begin{enumerate}[(i)]
\item Suppress a vertex of in-degree one and out-degree one.
\item Delete a vertex with out-degree zero that is not in $X$.
\item Delete a vertex of in-degree zero and out-degree one. 
\end{enumerate}
By construction, $v_j$ is the only vertex of $\cN'$ with in-degree 2. Moreover, since $\cT_s\ncong\cT_{s'}$, the two arcs that are directed into $v_j$ are not in parallel. Hence $\cN'$ is a phylogenetic network. Furthermore, as $\cT_s,\cT_{s'}\in T(\cN)$, we also have  $\cT_s,\cT_{s'}\in T(\cN')$. Since $\cT_s\ncong\cT_{s'}$ and, consequently, each phylogenetic network that displays $\cT_s$ and $\cT_{s'}$ has at least one reticulation, it now follows from~\cite[Proposition 2]{baroni05} that  $d_\rSPR(\cT_s,\cT_{s'})=1$. This completes the proof of the lemma.
\end{proof}

The next lemma shows that the rSPR graph of a phylogenetic network is always connected. 

\begin{lemma}\label{l:1-connected}
Let $\cP$ be a set of phylogenetic $X$-trees. If there exists a phylogenetic network $\cN$ with $T(\cN)=\cP$, then the rSPR graph of $\cP$ is connected.
\end{lemma}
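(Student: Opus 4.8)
The plan is to transfer the connectivity of the hypercube $Q_k$ to the rSPR graph of $\cP$ via the string encoding, using Lemma~\ref{l:hamming1} as the bridge. First I would let $k$ be the number of reticulations of $\cN$ and fix an ordered binary assignment $(\cN,\phi,(v_1,v_2,\ldots,v_k))$. The crucial observation is that the map $s\mapsto\cT_s$ sending each $k$-bit string to the tree it encodes under $\phi$ is a surjection onto $T(\cN)=\cP$: every tree displayed by $\cN$ is obtained from $\cN$ by selecting exactly one incoming arc at each reticulation (and then cleaning up degree-two and pendant vertices), and such a selection is recorded by a $k$-bit string. The case $k=0$ is degenerate, since then $\cN$ is a tree, $\cP$ consists of a single tree, and its rSPR graph is a single vertex; so I would assume $k\ge 1$.

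Next I would invoke the hypercube $Q_k$, whose vertex set is precisely the collection of $k$-bit strings and which is connected. The key link is that the surjection $s\mapsto\cT_s$ carries each edge of $Q_k$ to either an edge of the rSPR graph of $\cP$ or to a single identified vertex: if $\{s,s'\}$ is an edge of $Q_k$, then $d(s,s')=1$, so Lemma~\ref{l:hamming1} yields $d_\rSPR(\cT_s,\cT_{s'})\le 1$, whence either $\cT_s\cong\cT_{s'}$ or $\{\cT_s,\cT_{s'}\}$ is an edge of the rSPR graph.

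To finish, I would take two arbitrary trees $\cT,\cT'\in\cP$ and, by surjectivity, choose $k$-bit strings $s,s'$ with $\cT_s=\cT$ and $\cT_{s'}=\cT'$. Since $Q_k$ is connected, there is a path $s=p_0,p_1,\ldots,p_m=s'$ in $Q_k$ with $d(p_i,p_{i+1})=1$ for each $i$. Applying the previous paragraph to each consecutive pair, the sequence $\cT_{p_0},\cT_{p_1},\ldots,\cT_{p_m}$ becomes a walk in the rSPR graph from $\cT$ to $\cT'$ once consecutive repetitions (those steps with $\cT_{p_i}\cong\cT_{p_{i+1}}$) are deleted. As $\cT$ and $\cT'$ were arbitrary, any two vertices of the rSPR graph are joined by a walk, so the graph is connected.

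I expect the only point requiring care to be the surjectivity of $s\mapsto\cT_s$ onto $T(\cN)$, that is, that no displayed tree is missed by the encoding; this is immediate from the definition of displaying, since a subdivision of a displayed tree inside $\cN$ retains exactly one reticulation arc at each reticulation and hence corresponds to some string. Everything else is a routine transfer of connectivity from $Q_k$ to $\cP$ along the encoding map, so I do not anticipate a genuine obstacle here.
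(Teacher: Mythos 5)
Your proposal is correct and follows essentially the same route as the paper: both transfer connectivity of the space of $k$-bit strings to the rSPR graph via the encoding $s\mapsto\cT_s$ and Lemma~\ref{l:hamming1}, the paper doing so by walking along a Gray code (a Hamilton cycle of $Q_k$) and collapsing duplicate vertices, while you project arbitrary paths of $Q_k$ to walks in the rSPR graph. The surjectivity point you flag is indeed the only detail needing care, and the paper relies on it in exactly the same (implicit) way.
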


\begin{proof}
Suppose that $\cP$ is the display set of a phylogenetic network $\cN$ on $X$ that has $k$ reticulations. Let $(\cN,\phi,(v_1,v_2,\ldots,v_k))$ be an ordered binary assignment of $\cN$. Furthermore, let $n=2^k$, and let $B$ be the set of all $k$-bit strings. Consider an ordering, say $(s_1,s_2,\ldots,s_n)$, on the elements in $B$ that is a Gray code. Now, for each $j\in\{1,2,\ldots, n\}$, let $\cT_j$ be the phylogenetic $X$-tree that is encoded by $s_j$ under $\phi$, and let  $S_j$ be the directed spanning tree of $\cN$ that, for each $i\in\{1,2,\ldots,k\}$, uses the $1$-arc of $v_i$ under $\phi$ if the $i$-th bit of $s_j$ is 1 and, otherwise, $S_j$ uses the $0$-arc of $v_i$ under $\phi$. Note that we may have $\cT_j=\cT_{j'}$ for two distinct elements $j,j'\in\{1,2,\ldots,n\}$. Let $G$ be the  undirected graph without loops whose vertex set is $\{S_1,S_2,\ldots,S_n\}$ and for which $\{S_j,S_{j'}\}$ is an edge precisely if $d_\rSPR(\cT_j,\cT_{j'})\leq 1$. By Lemma~\ref{l:hamming1}, it follows that each of $$\{S_1,S_2\},\{S_2,S_3\},\ldots,\{S_{n-1},S_n\}$$ is an edge in $G$ and, hence, $G$ is connected. If $|T(\cN)|=n$, then, as the elements $\cT_1,\cT_2,\ldots,\cT_n$ are pairwise distinct, it is straightforward to check that $G$ is isomorphic to the rSPR graph of $\cN$. Assume that $\cT_j\cong\cT_{j'}$ for two distinct elements $j,j'\in\{1,2,\ldots,n\}$. Since $d_\rSPR(\cT_j,\cT_{j'})=0$, the edge $\{S_j,S_{j'}\}$ exists in $G$. Moreover, $\{S_j,S_l\}$ is an edge in $G$ if and only if $\{S_{j'},S_l\}$ is an edge in $G$ with $l\in\{1,2,\ldots,n\}$. It now follows that the undirected graph obtained from $G$ by deleting $S_{j'}$ is connected. By construction, repeating this vertex deletion operation until there exists no further pair of vertices $S_j$ and $S_{j'}$ with $\cT_j\cong \cT_{j'}$ results in a connected graph that is isomorphic to the rSPR graph of $\cN$.
\end{proof}

Although it is well-known that each set $\cP$ of phylogenetic trees can be displayed by some phylogenetic network $\cN$ such that $\cP\subseteq T(\cN)$, it is not always possible to find a phylogenetic network with display set $\cP$. In this case, each phylogenetic network that displays $\cP$ infers additional phylogenetic trees that are not contained in $\cP$. It is therefore of interest to characterise sets of phylogenetic trees that are equal to the display set of some phylogenetic network. The next corollary, which follows from the contrapositive of Lemma ~\ref{l:1-connected}, makes a first step in this direction and gives a sufficient condition for when there exists no phylogenetic network whose display set is equal to a given set of phylogenetic trees.

\begin{corollary}\label{c:no-name}
Let $\cP$ be a set of phylogenetic $X$-trees. If the rSPR graph for $\cP$ is not connected, then there exists no phylogenetic network $\cN$ on $X$ such that $T(\cN)=\cP$.
\end{corollary}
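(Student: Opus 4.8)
The plan is to observe that Corollary~\ref{c:no-name} is simply the contrapositive of Lemma~\ref{l:1-connected}, so the entire argument reduces to a one-line logical transposition of an already-established result. Lemma~\ref{l:1-connected} asserts the implication: if there exists a phylogenetic network $\cN$ with $T(\cN)=\cP$, then the rSPR graph of $\cP$ is connected. The statement to prove is: if the rSPR graph of $\cP$ is not connected, then no phylogenetic network $\cN$ on $X$ satisfies $T(\cN)=\cP$. These two implications have the form ``$A\Rightarrow B$'' and ``$\neg B\Rightarrow\neg A$'', which are logically equivalent.

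Accordingly, the proof I would write is essentially a direct appeal to the contrapositive. First I would suppose, for the sake of contradiction, that the rSPR graph of $\cP$ is not connected yet there nonetheless exists a phylogenetic network $\cN$ on $X$ with $T(\cN)=\cP$. Applying Lemma~\ref{l:1-connected} to this hypothetical network $\cN$ immediately yields that the rSPR graph of $\cP$ is connected, contradicting our supposition. Hence no such network $\cN$ can exist, which is precisely the claim.

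I do not anticipate any genuine obstacle here, since all the substantive work---constructing the auxiliary graph $G$ from a Gray code ordering, invoking Lemma~\ref{l:hamming1} to certify its edges, and collapsing duplicate spanning trees to recover the rSPR graph---has already been carried out in the proof of Lemma~\ref{l:1-connected}. The only point worth stating carefully is that $\cP$ here is an arbitrary set of phylogenetic $X$-trees, with no a priori guarantee of being anyone's display set; the corollary's strength lies exactly in ruling out that possibility from the disconnectedness of the rSPR graph alone. Thus the formal content is just the transposition, and I would keep the proof to a single sentence invoking Lemma~\ref{l:1-connected}.

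\begin{proof}
This is the contrapositive of Lemma~\ref{l:1-connected}. Explicitly, suppose that the rSPR graph for $\cP$ is not connected and, towards a contradiction, that there exists a phylogenetic network $\cN$ on $X$ with $T(\cN)=\cP$. Then Lemma~\ref{l:1-connected} implies that the rSPR graph of $\cP$ is connected, a contradiction. Hence no phylogenetic network $\cN$ on $X$ satisfies $T(\cN)=\cP$.
\end{proof}
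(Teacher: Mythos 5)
Your proof is correct and matches the paper exactly: the paper gives no separate proof of Corollary~\ref{c:no-name}, stating only that it follows from the contrapositive of Lemma~\ref{l:1-connected}, which is precisely the one-line transposition you carry out. Nothing further is needed.
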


\noindent In addition to the last corollary that does not impose any restrictions on the phylogenetic networks under consideration, Section~\ref{sec:level-1} establishes necessary and sufficient conditions for when a set of phylogenetic trees is the display set of a level-$1$ network.

Now, let $\cT$ and $\cT'$ be two phylogenetic $X$-trees. Accounting for $\rho$ in the definition of an agreement forest for $\cT$ and $\cT'$, there are $2|X|-1$ forests of size two that can be obtained from $\cT$ by deleting a single arc. Hence, it can be checked in polynomial time if $d_\rSPR(\cT,\cT')=1$ because, in this case, one of the $2|X|-1$  forests is guaranteed to be a maximum agreement forest for $\cT$ and $\cT'$. In turn, for an arbitrary-sized set $\cP$ of phylogenetic trees, it can be checked in time that is polynomial in $|\cP|$ and $|X|$ if the rSPR graph of $\cP$ is connected. Note that the converse of Corollary~\ref{c:no-name} is not true. For example, it is straightforward to check that each phylogenetic network that displays the three phylogenetic trees $\cT_{01}$, $\cT_{10}$, and $\cT_{00}$ that are shown in Figure~\ref{fig:rSPR-graph} has at least two reticulations and displays strictly more than three phylogenetic trees.

\section{Phylogenetic Networks with a Maximum Display Set} \label{sec:maximum}

In this section, we consider phylogenetic networks $\cN$ that have the property $|T(\cN)|=2^k$, where $k$ is the number of reticulations of $\cN$. As noted earlier the well-studied class of normal networks has this property~\cite{iersel10,willson12}. Moreover, as we will make more precise in the next section, if we ignore the trivial reticulations of a level-$1$ network $\cN$, then $\cN$ also has this property.

\begin{theorem}\label{t:hamiltonian-cycle}
Let $\cN$ be a phylogenetic network on $X$ with $k$ reticulations, and let $G$ be the rSPR graph of $\cN$. If $|T(\cN)|=2^k$, then the $k$-dimensional hypercube $Q_k$ is a spanning subgraph of $G$. In particular, $G$ has a \Blue{Hamilton} cycle if $k\ge 2$. 
\end{theorem}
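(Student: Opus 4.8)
The plan is to transport the known Hamilton cycle of $Q_k$ over to $G$ by realising $Q_k$ as a spanning subgraph of $G$, with the vertex identification supplied by the string encoding introduced just before Lemma~\ref{l:hamming1}. First I would fix an ordered binary assignment $(\cN,\phi,(v_1,v_2,\ldots,v_k))$ and consider the encoding map $s\mapsto\cT_s$ that sends each $k$-bit string to the tree it encodes under $\phi$. Every tree displayed by $\cN$ is obtained by selecting, at each reticulation, one of its two incoming arcs and then suppressing and deleting as in the construction of $\cT_s$, so this map is surjective onto $T(\cN)$; indeed it is exactly this surjectivity that yields the bound $|T(\cN)|\le 2^k$. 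Under the hypothesis $|T(\cN)|=2^k$, a surjection between two sets of the common cardinality $2^k$ is a bijection, and so $s\mapsto\cT_s$ is a bijection from the vertex set of $Q_k$ onto the vertex set $T(\cN)$ of $G$.

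Next I would check that this vertex bijection carries each edge of $Q_k$ to an edge of $G$. Let $\{s,s'\}$ be an edge of $Q_k$, so that $d(s,s')=1$. Lemma~\ref{l:hamming1} gives $d_\rSPR(\cT_s,\cT_{s'})\le 1$. Since the encoding is a bijection and $s\ne s'$, the trees $\cT_s$ and $\cT_{s'}$ are distinct, whence $d_\rSPR(\cT_s,\cT_{s'})\ge 1$; combining the two inequalities forces $d_\rSPR(\cT_s,\cT_{s'})=1$, which is precisely the condition for $\{\cT_s,\cT_{s'}\}$ to be an edge of $G$. Hence the image of $Q_k$ under the bijection is a spanning subgraph of $G$ isomorphic to $Q_k$, establishing the first assertion.

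For the Hamilton cycle, I would invoke the existence of a cyclic Gray code on all $k$-bit strings for $k\ge 2$, which by Theorem~\ref{t:code-iff-cube} is the edge set of a Hamilton cycle of $Q_k$. Because $Q_k$ sits inside $G$ as a spanning subgraph, every edge of this cycle is also an edge of $G$ and the cycle visits all $2^k$ vertices of $G$; therefore it is a Hamilton cycle of $G$ as well.

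I expect the one point requiring care to be the bijectivity step: one must justify that the encoding map is surjective, so that the equicardinality hypothesis upgrades it to a bijection, since surjectivity simultaneously guarantees that distinct strings encode distinct trees and that no element of $T(\cN)$ is omitted. Once this is in place, edge preservation is immediate from Lemma~\ref{l:hamming1} after distance $0$ is excluded by injectivity, and the Hamilton cycle then follows with no further work.
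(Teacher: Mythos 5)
Your proposal is correct and follows essentially the same route as the paper: both use the encoding map $s\mapsto\cT_s$, observe that maximality of $T(\cN)$ forces it to be a bijection, apply Lemma~\ref{l:hamming1} together with injectivity to upgrade $d_\rSPR\le 1$ to $d_\rSPR=1$ on edges of $Q_k$, and then transport a Gray-code Hamilton cycle via Theorem~\ref{t:code-iff-cube}. Your explicit remark that surjectivity of the encoding is the point needing care is a fair gloss on the paper's terser ``as $T(\cN)$ is maximum, $\psi$ is a bijection.''
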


\begin{proof}
Let  $(\cN,\phi,(v_1,v_2,\ldots,v_k))$ of $\cN$ be an ordered binary assignment of $\cN$, and let $n=2^k$. Let $B$ be the set of all $k$-bit strings, and let $(s_1,s_2,\ldots,s_{n})$ be an ordering on the elements in $B$ that is a Gray code. Now consider $Q_k$. By Theorem~\ref{t:code-iff-cube}, $\{\{s_1,s_2\},\{s_2,s_3\},\ldots,\{s_{n-1},s_{n}\},\{s_{n},s_1\}\}$ is the edge set of a \Blue{Hamilton} cycle in $Q_k$. 

We complete the proof by showing that $Q_k$ is a spanning subgraph of $G$. Let $$\psi:\{s_1,s_2,\ldots, s_n\}\rightarrow T(\cN)$$ be a map that assigns each $s_j$ with $j\in\{1,2,\ldots,n\}$, to the phylogenetic tree that is encoded by $s_j$ under $\phi$. As $T(\cN)$ is maximum, $\psi$ is a bijection. Consider an edge $\{s_j,s_{j'}\}$ in $Q_k$. As $d(s_j,s_{j'})=1$, it follows from Lemma~\ref{l:hamming1} that $d_\rSPR(\psi(s_j),\psi(s_{j'}))\leq 1$. In fact, again as $T(\cN)$ is maximum, we have $$d_\rSPR(\psi(s_j),\psi(s_{j'}))= 1.$$ Thus, if $\{s_j,s_{j'}\}$ is an edge in $Q_k$, then $\{\psi(s_j),\psi(s_{j'})\}$ is an edge in $G$. It now follows that, as $Q_k$ has a \Blue{Hamilton} cycle, so does $G$, thereby establishing the theorem.
\end{proof}

\noindent Referring back to Figure~\ref{fig:rSPR-graph}, observe that each of the two phylogenetic networks $\cN$ and $\cN'$ that is shown in this figure is normal and  has two reticulations. Furthermore the rSPR graph $G$ of $\cN$ is $Q_2$, and the rSPR graph of $\cN'$ is the complete graph on four vertices, which can be obtained from $Q_2$ by adding two additional edges.

\section{Characterising Display Sets of Level-$1$ Networks}\label{sec:level-1}

In this section, we characterise when a set $\cP$ of phylogenetic trees is the display set of a level-$1$ network. This characterisation is phrased in terms of the rSPR graph of $\cP$. Unlike the previous sections that gave explicit binary assignments as well as a mapping from the set of all bit strings of a given length to a collection of phylogenetic trees, for ease of reading, bijections between vertices of a hypercube and vertices of an rSPR graph are implicit in this and the next section. We start by giving some further definitions.

Let $\cT$ and $\cT'$ be two phylogenetic $X$-trees each with root $\rho$, and suppose that $d_\rSPR(\cT, \cT')=1$. We refer to a subset $X'$ of $X$ as a {\it moving subtree} for $\cT$ and $\cT'$ if the bipartition $$\{(X\cup\{\rho\})-X',X'\}$$ of $X\cup\{\rho\}$ is a maximum agreement forest for $\cT$ and $\cT'$. Intuitively, if $X'$ is a moving subtree for $\cT$ and $\cT'$, then $\cT'$ can be obtained from $\cT$ by pruning and regrafting the pendant subtree $\cT|X'$. Note that $\cT$ and $\cT'$ may not have a unique moving subtree. If $X'$ is a moving subtree for $\cT$ and $\cT'$, we associate this move with the ordered pair $(X', Y')$, where $Y'$ is the minimal cluster in $C(\cT)\cap C(\cT')$ properly containing $X'$.

Let $\cP$ be a set of phylogenetic $X$-trees such that $|\cP|=2^k$ for some non-negative integer $k$, and let $G$ be the rSPR graph of $\cP$. Suppose that there is a (graph) isomorphism from $G$ to $Q_k$. Under this isomorphism, for all $i\in \{1, 2, \ldots, k\}$, let $E_i$ denote the subset of edges of $G$ corresponding to the $i$-th bit edge subset of $Q_k$ throughout the remainder of the paper. Now label each edge $e$ of $G$ with the ordered pair that is associated with a moving subtree for the end vertices of $e$, and suppose that, for all $i\in \{1, 2, \ldots, k\}$, this labelling can be done so that each edge in $E_i$ has the same label, $(X_i, Y_i)$ say. Then $G$ is said to have the {\em nested subtree property} if, for all distinct $i, j\in \{1, 2, \ldots, k\}$, the ordered pairs $(X_i, Y_i)$ and $(X_j, Y_j)$ satisfy one of the following:
\begin{enumerate}[(I)]
\item $Y_i\cap Y_j=\emptyset$;

\item $Y_i\subseteq X_j$; and

\item $Y_i\subset Y_j$ and $X_j\cap Y_i=\emptyset$.
\end{enumerate}

\begin{figure}[t]
\center
\scalebox{1}{\input{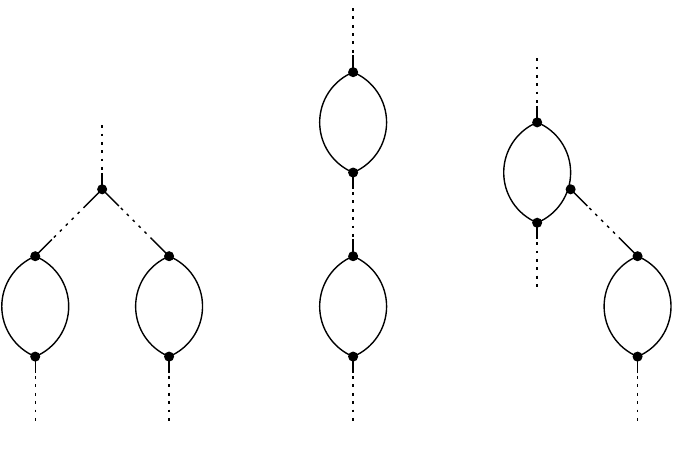_t}}
\caption{The three ways in which the clusters of two reticulations $v_i$ and $v_j$ and the  clusters of their two respective source vertices $u_i$ and $u_j$ of a level-$1$ network can interact. Note that the ordered pairs $(C(v_i), C(u_i))=(X_i,Y_i)$ and $(C(v_j), C(u_j))=(X_j, Y_j)$ satisfy (I) in the definition of the nested subtree property for the level-$1$ network on the left-hand side, and (II) and (III) of the same definition for the level-$1$ network in the middle and right-hand side, respectively. Each solid arc of an underlying cycle indicates a directed path of arbitrary length.}
\label{fig:nested-subtree-prop}
\end{figure}

It is easily checked that, for all distinct ordered pairs $(X_i, Y_i)$ and $(X_j, Y_j)$, at most one of (I)--(III) holds, and if $(X_i, Y_i)$ and $(X_j, Y_j)$ are distinct and satisfy one of (I)--(III), then $Y_i\neq Y_j$. In this section, we will always view $G$ as having each of its edges labelled with the ordered pair associated with a corresponding moving subtree represented by the edge.

Properties (I)--(III) of the nested subtree property capture the way certain clusters of a level-$1$ network interact. In particular, let $\cN$ be a level-$1$ network, and let $u$ be a vertex of an underlying cycle $\cC$ of $\cN$, and let $v$ be the (unique) reticulation of $\cC$. If $u$ is the root of $\cN$ or no arc of $\cN$ that is directed into $u$ lies on $\cC$, then $u$ is called the {\it source vertex} of $v$. Since no two underlying cycles of $\cN$ intersect, it is easily seen that this notion is well defined. Now, if $v_i$ and $v_j$ are distinct reticulations of $\cN$, and $u_i$ and $u_j$ are the source vertices of $v_i$ and $v_j$, respectively, then it turns out that the ordered pairs $(C(v_i), C(u_i))$ and $(C(v_j), C(u_j))$ satisfy one of (I)--(III) \Blue{as illustrated in  Figure~\ref{fig:nested-subtree-prop}.}

We are now in a position to state the main result of this section. 

\begin{theorem}\label{t:main}
Let $\cP$ be a set of phylogenetic $X$-trees. Then $\cP$ is the display set of a level-$1$ network on $X$ if and only if, for some non-negative integer $k$, the rSPR graph of $\cP$ is isomorphic to $Q_k$ and has the nested subtree property.
\end{theorem}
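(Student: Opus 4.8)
The plan is to prove the two directions of the biconditional separately, with the forward direction (a level-$1$ display set forces the hypercube and nested subtree structure) being the more routine one and the converse (reconstructing a level-$1$ network from the combinatorial data) being the genuine obstacle.

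\textbf{Forward direction.} Suppose $\cP = T(\cN)$ for a level-$1$ network $\cN$ on $X$. First I would observe that in a level-$1$ network every reticulation $v$ lies on a unique underlying cycle, and the two reticulation arcs into $v$ are the two arcs of that cycle incident with $v$. Some reticulations may be \emph{trivial} in the sense that switching between their two arcs never changes the displayed tree (for instance when one arc is a shortcut-like arc forced by the cycle structure); such reticulations contribute nothing to $T(\cN)$ and I would argue they can be detected and ignored, leaving $k$ \emph{nontrivial} reticulations with $|T(\cN)| = 2^k$. By Theorem~\ref{t:hamiltonian-cycle}, the rSPR graph $G$ then contains $Q_k$ as a spanning subgraph; I would upgrade this to an isomorphism by checking that distinct bit-strings encode distinct trees (maximality) and, crucially, that $G$ has \emph{no extra edges}, i.e.\ two displayed trees are at rSPR distance $1$ only if their encoding bit-strings differ in exactly one coordinate. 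For the edge labels, I would show that flipping the $i$-th reticulation $v_i$ between its $0$-arc and $1$-arc performs exactly one rSPR move whose moving subtree is $C(v_i)$ regrafted within $C(u_i)$, where $u_i$ is the source vertex of $v_i$; thus $E_i$ is consistently labelled $(X_i,Y_i)=(C(v_i),C(u_i))$. Finally I would verify the nested subtree property by a case analysis on how two cycles of a level-$1$ network can sit relative to each other: since cycles are vertex-disjoint, the source/reticulation pairs $(u_i,v_i)$ and $(u_j,v_j)$ are either in disjoint subtrees (case (I)), or one cycle hangs entirely below the reticulation region of the other (case (II)), or one cycle sits strictly inside the other's span but below its source (case (III)) --- exactly the three configurations drawn in Figure~\ref{fig:nested-subtree-prop}.

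\textbf{Converse direction.} Here I would assume $G \cong Q_k$ with the nested subtree property and the edge labels $(X_i,Y_i)$, and construct a level-$1$ network $\cN$ with $T(\cN)=\cP$. The natural strategy is to pick any single vertex of $G$, i.e.\ any one tree $\cT_0 \in \cP$, as a ``base'' and build $\cN$ by attaching, for each bit coordinate $i$, a reticulation realising the move $(X_i,Y_i)$. Concretely, $\cT_0$ displays the subtree $\cT_0|X_i$ hanging inside the region spanned by $Y_i$; the move in $E_i$ slides $\cT_0|X_i$ to an alternative attachment point (the other tree at that edge of $G$), and I would install a reticulation $v_i$ with $C(v_i)=X_i$ whose two incoming arcs correspond to the two attachment points, with source vertex placed to realise $C(u_i)=Y_i$. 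The three cases of the nested subtree property are precisely what guarantees that the resulting cycles are pairwise vertex-disjoint --- disjoint ($Y_i\cap Y_j=\emptyset$), non-overlapping-nested via containment in a moving subtree (case II), or properly nested without the inner source colliding with the outer moving subtree (case III) --- so that $\cN$ is genuinely level-$1$ rather than a higher-level tangle. I would then argue, probably by induction on $k$ using the Gray-code/Hamilton-cycle correspondence of Theorem~\ref{t:code-iff-cube}, that the display set of this $\cN$ is exactly $\{\cT_s : s \in \{0,1\}^k\}$, that these trees are pairwise distinct (so $|T(\cN)|=2^k$), and that the rSPR adjacencies match $Q_k$ --- recovering $\cP$ on the nose.

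\textbf{Main obstacle.} The hard part is the converse, and specifically showing that the nested subtree property is \emph{sufficient} to place all $k$ reticulations simultaneously inside one consistent level-$1$ network while exactly reproducing $\cP$ and not inferring any additional tree. Installing one reticulation is easy; the difficulty is that the moves labelling different bit coordinates must be mutually compatible as \emph{simultaneous} structural features of a single network, and one must verify that the subtrees $X_i$ and the spans $Y_i$ can all be coherently embedded. The three cases (I)--(III) are engineered to be exactly the compatibility conditions needed, but turning ``pairwise compatible labels'' into ``a single globally consistent level-$1$ network'' requires care: I would likely organise the reticulations into a forest-like nesting order induced by the $\subseteq$ and $\subset$ relations among the $Y_i$ (using the remark that case (I)--(III) membership is unique and forces $Y_i \neq Y_j$), and build $\cN$ by processing reticulations from the outermost $Y_i$ inward, checking at each step that the cycle just added meets no previously added cycle. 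A secondary subtlety, needed in both directions, is the bookkeeping around trivial reticulations and the ``delete a vertex of in-degree zero and out-degree one'' cleanup (the root-containing $3$-cycle case flagged in Section~\ref{sec:properties}), which must be handled so that the count of nontrivial reticulations equals the hypercube dimension $k$.
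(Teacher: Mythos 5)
Your overall plan matches the paper's architecture: the two directions are proved separately (the paper's Lemmas~\ref{main1} and~\ref{main2}), trivial reticulations are stripped first via the essential network, the forward labels are $(C(v_i),C(u_i))$ for source/reticulation pairs, and the converse is the hard part. However, in both directions the step you defer to a ``check'' is exactly the step where the real argument lives, and as written each contains a genuine gap.

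In the forward direction you propose to upgrade the spanning-subgraph conclusion of Theorem~\ref{t:hamiltonian-cycle} to an isomorphism ``by checking that $G$ has no extra edges,'' i.e.\ that two displayed trees whose encoding bit-strings differ in two or more coordinates have rSPR distance at least $2$. This is not a routine check: it fails for general networks with maximum display sets --- the normal network $\cN'$ of Figure~\ref{fig:rSPR-graph} has two reticulations and rSPR graph $K_4$, which strictly contains $Q_2$ --- so level-$1$-ness must enter essentially here, and you give no mechanism for it. The paper gets this by induction on $k$, splitting $\cN$ at a reticulation $v_k$ whose source vertex is deepest; then the only cross edges to exclude join a tree $\cT$ of one half to a non-partner $\cS$ of the other half, and since both have a pendant subtree on $Y_k$ with $\cT|Y_k\ncong\cS|Y_k$, any size-two agreement forest must cut inside $Y_k$ and hence equal $\{(X\cup\{\rho\})-X_k,X_k\}$, forcing $\cS$ to be the partner of $\cT$ after all. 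Some argument of this shape is unavoidable.

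In the converse direction, installing a single reticulation $v_i$ that simultaneously realises all $2^{k-1}$ moves labelled $(X_i,Y_i)$ presupposes that all trees on one side of the $E_i$-cut restrict isomorphically to $Y_i$; otherwise the two attachment points you read off one edge of $E_i$ need not be attachment points for the remaining trees, and your construction is not well defined. This is a key lemma in the paper, derived from the nested subtree property together with choosing $(X_i,Y_i)$ so that no $Y_j$ is properly contained in $Y_i$ --- note the paper peels off an \emph{innermost} $Y_i$ by a subtree reduction and inducts, which is the opposite order to your outermost-first plan and is what makes the induction hypothesis applicable. You also still owe the argument that $T(\cN)$ equals $\cP$ rather than a proper superset; once $\cP\subseteq T(\cN)$ is established, the bound $|T(\cN)|\le 2^k=|\cP|$ closes this, but it should be said. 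With these two lemmas supplied, your outline becomes essentially the paper's proof.
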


Note that, in the statement of Theorem~\ref{t:main}, if $\cP$ is the display set of a level-$1$ network, then $|\cP|=2^k$ for some non-negative integer $k$.

\begin{figure}[t]
\center
\scalebox{1}{\input{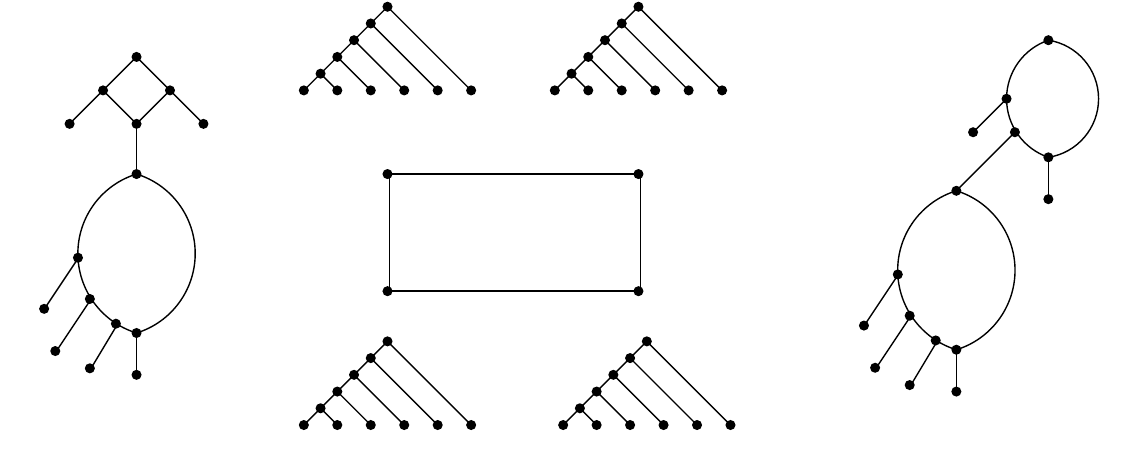_t}}
\caption{The rSPR graph $G$ of a level-$1$ network $\cN$ with $T(\cN)=\{\cT_1,\cT_2, \cT_3, \cT_4\}$, and a level-$1$ network $\cN'$ such that $T(\cN)=T(\cN')$.}
\label{fig:level-1}
\end{figure}

\noindent {\bf Example.} Let $X=\{1, 2, 3, 4, 5, 6\}$, and consider the level-$1$ network $\cN$ on $X$ shown in Figure~\ref{fig:level-1}. The set $T(\cN)$ consists of the four phylogenetic trees $\cT_1$, $\cT_2$, $\cT_3$, and $\cT_4$ which are illustrated as the vertices of the rSPR graph $G$ of this set in the same figure. Now, $\{1, 2, 3, 4\}$ is a moving subtree for $\cT_1$ and $\cT_2$ as well as for $\cT_3$ and $\cT_4$. In both instances, the ordered pair corresponding to this moving subtree is 
$$(\{1, 2, 3, 4\}, X).$$
Similarly, $\{1\}$ is a moving subtree for $\cT_1$ and $\cT_3$ and for $\cT_2$ and $\cT_4$. The corresponding ordered pair in both instances is
$$(\{1\}, \{1, 2, 3, 4\}).$$
Since $(\{1, 2, 3, 4\}, X)$ and $(\{1\}, \{1, 2, 3, 4\})$ satisfy (II), and there are only two ordered pairs to compare, it follows that $G$ has the nested subtree property. Note that $(C(v_1), C(u_1))=(\{1, 2, 3, 4\}, X)$ and $(C(v_2), C(u_2))=(\{1\}, \{1, 2, 3, 4\})$.

Now observe that we could instead have labelled the edges $\{\cT_1, \cT_2\}$ and $\{\cT_3, \cT_4\}$ of $G$ with the ordered pair $(\{5\}, X)$, in which case, $(\{5\}, X)$ and $(\{1\}, \{1, 2, 3, 4\})$ satisfy (III). Thus the choice of ordered pair for the edges $\{\cT_1, \cT_2\}$ and $\{\cT_3, \cT_4\}$ is not unique. Indeed, in the proof of Theorem~\ref{t:main} this choice leads to the construction of another level-$1$ network $\cN'$ whose display set is also $\{\cT_1, \cT_2, \cT_3, \cT_4\}$. By way of comparison, $\cN'$ is shown in Figure~\ref{fig:level-1}.

The remainder of this section establishes Theorem~\ref{t:main}. We first provide some additional terminology and preliminary results. Subsequently, we establish separately the two directions of Theorem~\ref{t:main} as Lemmas~\ref{main1} and~\ref{main2}.

Let $v$ be a reticulation of a level-$1$ network $\cN$. We say that $v$ is {\it non-trivial} if the unique underlying cycle of $\cN$ that contains $v$ has at least four vertices and, otherwise, we say that $v$ is {\it trivial}. Let $v$ be a trivial reticulation of $\cN$. Obtain a phylogenetic network $\cN'$ from $\cN$ by deleting one of the two arcs directed into $v$ and suppressing the two resulting degree-2 vertices.  (If one of the two arcs is incident with the root of $\cN$, choose the other arc to delete.) As $\cN$ is level-$1$, so is $\cN'$. Repeating this step for each remaining trivial reticulation in $\cN'$ results in a level-$1$ network, say $\cN^*$, with no trivial reticulation. We refer to $\cN^*$ as the {\it essential level-$1$ network} with respect to $\cN$. Since no two underlying cycles of $\cN$ have a common vertex, $\cN^*$ is unique. Moreover, we have the following observation. 

\begin{observation}\label{ob:essential}\cite[Theorem 3.1]{linz}
Let $\cN$ be a level-$1$ network and let $\cN^*$ be the essential level-$1$ network with respect to $\cN$. Then $T(\cN)=T(\cN^*)$. Moreover, if $\cN^*$ has $k$ reticulations, then $|T(\cN^*)|=2^k$.
\end{observation}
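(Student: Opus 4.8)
The plan is to prove the two assertions of Observation~\ref{ob:essential} separately, relying on the fact that passing to the essential network only removes trivial reticulations one at a time. First I would establish that a single trivial-reticulation removal preserves the display set, and then iterate; second I would count the display set of $\cN^*$ using the absence of trivial reticulations.

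For the first claim, $T(\cN)=T(\cN^*)$, I would argue one deletion step at a time. Let $v$ be a trivial reticulation of $\cN$, so the underlying cycle $\cC$ containing $v$ has exactly three vertices: the reticulation $v$, its source vertex $u$, and a third vertex. Since the cycle has length three, the two reticulation arcs directed into $v$ are $(u,v)$ and $(w,v)$ for some vertex $w$ that is itself a child of $u$ on $\cC$ (one arc of the cycle being a direct arc $(u,v)$). The key point is that any tree displayed by $\cN$ arises from a choice of one reticulation arc at each reticulation; at $v$ the two choices of incoming arc yield the \emph{same} pendant subtree structure below $v$ after suppression of degree-2 vertices, because $u$ is an ancestor of both $v$ and $w$ on the short cycle. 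Hence the subdivision realising any $\cT\in T(\cN)$ can be taken to avoid the deleted arc, giving $T(\cN)\subseteq T(\cN')$; the reverse inclusion $T(\cN')\subseteq T(\cN)$ is immediate since $\cN'$ is obtained from $\cN$ by deleting an arc and suppressing, so every subtree of $\cN'$ is a subtree of $\cN$. Thus $T(\cN)=T(\cN')$, and iterating over all trivial reticulations gives $T(\cN)=T(\cN^*)$. I would also record that $\cN^*$ is level-$1$ at each step, as already asserted in the text preceding the observation.

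For the second claim, suppose $\cN^*$ has $k$ reticulations, all non-trivial, and I must show $|T(\cN^*)|=2^k$. The general bound $|T(\cN^*)|\le 2^k$ is already established in the preliminaries, so it suffices to prove that the $2^k$ binary assignments yield pairwise distinct trees. I would fix an ordered binary assignment $(\cN^*,\phi,(v_1,\ldots,v_k))$ and argue that distinct $k$-bit strings encode distinct trees. The crucial structural input is that each $v_i$ lies on an underlying cycle of length at least four (non-triviality), together with the level-$1$ condition that these cycles are vertex-disjoint. For a single non-trivial cycle, the two incoming arcs of the reticulation, when used in a spanning tree, attach the pendant subtree rooted at $v_i$ at two genuinely different locations, producing two non-isomorphic trees; disjointness of cycles then lets one localise the effect of flipping the $i$-th bit to a single cycle, so that the resulting tree records which arc was chosen at each reticulation independently. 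The cleanest way to make this rigorous is to exhibit, for each reticulation $v_i$, a cluster or a triple that differs according to the bit at position $i$ and is unaffected by the bits at the other positions; this recovers the full string from the tree and forces injectivity.

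The main obstacle is the injectivity argument in the second claim: I must argue carefully that flipping the bit at a non-trivial reticulation $v_i$ changes the isomorphism type of $\cT_s$, and that these changes do not interfere across different reticulations. The non-triviality hypothesis (cycle length $\ge 4$) is exactly what guarantees the two attachment points of the pendant subtree below $v_i$ are separated by at least one intermediate vertex, so the two choices are not in parallel and yield distinct clusters; this is precisely the place where trivial reticulations would fail, since for a $3$-cycle the two choices can collapse to the same tree. I would handle the interference issue by invoking the vertex-disjointness of the underlying cycles, which ensures the source vertex and cycle of $v_i$ are untouched by the arc choices at $v_j$ for $j\neq i$, so the distinguishing cluster for $v_i$ is stable. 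Rather than a direct combinatorial computation I would, if available, cite the cited source~\cite{linz} for the counting statement, since the observation is attributed there; the proof sketch above indicates why the statement holds and where the non-triviality is essential.
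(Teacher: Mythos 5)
The paper supplies no proof of this observation: it is imported directly from \cite[Theorem~3.1]{linz}, so there is nothing internal to compare your argument against. Your outline is, however, a faithful reconstruction of the standard argument. The first half (removing one arc of a $3$-cycle preserves the display set) is essentially complete: both incoming arcs of a trivial reticulation yield isomorphic trees after suppression, and the reverse inclusion is immediate. The second half is a correct plan rather than a proof; the two obligations you would still have to discharge are (a) that for a cycle with at least four vertices the two arc choices at $v_i$ give non-isomorphic trees --- which, as you say, follows because the longer of the two paths from the source $u_i$ to $v_i$ has an internal vertex that is a parent of $v_i$ with a non-empty off-cycle pendant subtree, so the two choices create clusters $A\cup D$ versus $A$ that cannot both occur in one tree --- and (b) the stability claim that this distinguishing cluster is unaffected by the arc choices at the other reticulations, which requires observing that in a level-$1$ network any underlying cycle meeting the set of vertices reachable from an off-cycle child of $\cC_i$ must lie entirely inside it (otherwise it would share a vertex with $\cC_i$). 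You identify both points and defer the full details to \cite{linz}, which is precisely what the authors themselves do, so the proposal is acceptable as it stands.
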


The next corollary is an immediate consequence of Theorem~\ref{t:hamiltonian-cycle} and Observation~\ref{ob:essential} as well as an immediate consequence of Theorem~\ref{t:main}.

\begin{corollary}\label{c:hamiltonian-cycle}
Let $G$ be the rSPR graph of a level-$1$ network with at least two non-trivial reticulations. Then $G$ has a \Blue{Hamilton} cycle.
\end{corollary}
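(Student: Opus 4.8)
The plan is to chain together the two results that the corollary explicitly cites, namely Theorem~\ref{t:hamiltonian-cycle} and Observation~\ref{ob:essential}. Let $\cN$ be a level-$1$ network with at least two non-trivial reticulations, and let $G$ be its rSPR graph. First I would pass to the essential level-$1$ network $\cN^*$ with respect to $\cN$. By definition, $\cN^*$ is obtained by deleting one reticulation arc for each \emph{trivial} reticulation, so every non-trivial reticulation of $\cN$ survives in $\cN^*$. Hence if $\cN$ has at least two non-trivial reticulations, then $\cN^*$ has at least two reticulations; write $k$ for the number of reticulations of $\cN^*$, so that $k\ge 2$.

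The key observation is that $G$ is the rSPR graph of $\cN^*$ as well as of $\cN$. Indeed, Observation~\ref{ob:essential} gives $T(\cN)=T(\cN^*)$, and the rSPR graph of a network depends only on its display set; since the rSPR graph of $\cN$ is by definition the rSPR graph of $T(\cN)=T(\cN^*)$, the graph $G$ is precisely the rSPR graph of $\cN^*$. The second part of Observation~\ref{ob:essential} then tells us that $|T(\cN^*)|=2^k$, so $\cN^*$ is a network whose display set is maximum.

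With these facts in place, the corollary is immediate from Theorem~\ref{t:hamiltonian-cycle}: applying that theorem to $\cN^*$ (a phylogenetic network with $k$ reticulations and $|T(\cN^*)|=2^k$) shows that $Q_k$ is a spanning subgraph of $G$ and, because $k\ge 2$, that $G$ has a \Blue{Hamilton} cycle. This completes the argument.

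There is essentially no hard part here, since the corollary is explicitly flagged as a direct consequence of the two cited results; the only point requiring a moment's care is verifying that the hypothesis ``at least two non-trivial reticulations'' translates into the hypothesis $k\ge 2$ needed by Theorem~\ref{t:hamiltonian-cycle}. This follows because passing from $\cN$ to $\cN^*$ removes only trivial reticulations and preserves the non-trivial ones, so the count of non-trivial reticulations of $\cN$ is a lower bound for the total reticulation count $k$ of $\cN^*$.
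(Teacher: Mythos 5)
Your proposal is correct and follows exactly the route the paper indicates: the paper gives no explicit proof but states the corollary is an immediate consequence of Theorem~\ref{t:hamiltonian-cycle} and Observation~\ref{ob:essential}, which is precisely the chain you spell out (pass to the essential network $\cN^*$, note $T(\cN)=T(\cN^*)$ with $|T(\cN^*)|=2^k$ and $k\ge 2$ since only trivial reticulations are removed, then apply Theorem~\ref{t:hamiltonian-cycle}). Your added care in checking that non-trivial reticulations survive the passage to $\cN^*$ is a sound filling-in of the detail the paper leaves implicit.
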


\begin{figure}[t]
\center
\scalebox{1}{\input{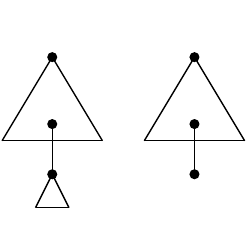_t}}
\caption{A generic example of the subtree reduction that reduces a phylogenetic $X$-tree $\cT$ to a phylogenetic tree $\cT'$ on leaf set $(X-X')\cup\{x'\}$. Triangles indicate subtrees.}
\label{fig:subtree-reduction}
\end{figure}

Let $\cT$ be a phylogenetic $X$-tree, and suppose that $X'$ is a subset of $X$ such that $X'$ is a cluster of $\cT$. Let $\cT'$ be the phylogenetic tree obtained from $\cT$ by replacing the pendant subtree whose leaf set is $X'$ with a new leaf, $x'$ say. That is, $\cT'$ is obtained from $\cT$ by deleting all vertices $v$ (and their incident arcs) such that $C(v)$ is a proper subset of $X'$ and label the vertex $u$ whose cluster is $X'$ with $x'$. Note that the leaf set of $\cT'$ is $(X-X')\cup \{x'\}$. We say that $\cT'$ has been obtained from $\cT$ by a {\em subtree reduction on $X'$}. The leaf $x'$ is referred to as the {\em replacement leaf}. \Blue{A generic example of a subtree reduction is shown in Figure~\ref{fig:subtree-reduction}.}

\begin{lemma}
\cite[Proposition~3.2]{bordewich05} Let $\cT$ and $\cT'$ be two distinct phylogenetic $X$-trees, and suppose that $Y\subseteq X$ such that $\cT|Y\cong \cT'|Y$. Let $\cT_1$ and $\cT'_1$ be the phylogenetic trees obtained from $\cT$ and $\cT'$, respectively, by applying a subtree reduction on $Y$ with new replacement leaf $y$. Then $d_{\rm rSPR}(\cT, \cT')=1$ if and only if $d_{\rm rSPR}(\cT_1, \cT'_1)=1$. In particular, $\{(X\cup \{\rho\})-X', X'\}$ is an agreement forest for $\cT$ and $\cT'$ if and only if either
\begin{enumerate}[{\rm (i)}]
\item $Y\subseteq X'$ and $\{(X\cup \{\rho\})-X', (X'-Y)\cup \{y\}\}$ is an agreement forest for $\cT_1$ and $\cT'_1$, or

\item $Y\subseteq (X\cup \{\rho\})-X'$ and $\{((X\cup \{\rho\})-(X'\cup Y))\cup \{y\}, X'\}$ is an agreement forest for $\cT_1$ and $\cT'_1$.
\end{enumerate}
\label{subtree}
\end{lemma}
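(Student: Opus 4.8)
The plan is to reduce the general statement to a manageable case via the subtree reduction and then invoke the agreement-forest/rSPR dictionary. The central object is the hypothesis $\cT|Y\cong\cT'|Y$, which says that $\cT$ and $\cT'$ agree on the pendant subtree spanning $Y$. The first move is to fix notation: let $Y$ be a cluster in both $\cT$ and $\cT'$ (note that agreement on $Y$ together with the fact that $Y$ is a common cluster is what lets the reduction be performed consistently on both trees), and write $\cT_1=\cT/Y$ and $\cT'_1=\cT'/Y$ for the trees obtained by collapsing the common pendant subtree to the single replacement leaf $y$. The key structural observation is that any agreement forest for the pair $(\cT,\cT')$ interacts with $Y$ in essentially one way: because $\cT(Y)\cong\cT'(Y)$ is a common pendant subtree, the block of any agreement forest that meets $Y$ can, without loss of generality, be taken to contain all of $Y$; splitting $Y$ across two blocks can only increase the forest size, so a maximum agreement forest never does so.

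For the main equivalence $d_{\mathrm{rSPR}}(\cT,\cT')=1\iff d_{\mathrm{rSPR}}(\cT_1,\cT'_1)=1$, I would argue both directions by transporting agreement forests across the reduction. Given a size-two agreement forest $\{(X\cup\{\rho\})-X',X'\}$ for $\cT,\cT'$, the preceding observation says $Y$ lies entirely inside one of the two blocks. Replacing every occurrence of the set $Y$ by the single leaf $y$ in that block yields a partition of $(X-Y)\cup\{y\}\cup\{\rho\}$ of size two; one then checks it is an agreement forest for $\cT_1,\cT'_1$ by verifying properties (i) and (ii) of an agreement forest, using that restriction and the $\cT(\cdot)$ operation both commute with the subtree reduction on $Y$ (this is where $\cT|Y\cong\cT'|Y$ is used: the collapsed leaf $y$ occupies the same position relative to the rest of $X-Y$ in both $\cT_1$ and $\cT'_1$). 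Conversely, given a size-two agreement forest of $\cT_1,\cT'_1$, expand $y$ back into the common subtree $\cT(Y)$; again vertex-disjointness and the isomorphism conditions are preserved, yielding a size-two agreement forest for $\cT,\cT'$. By the theorem of Bordewich and Semple (the maximum agreement forest theorem cited above), $d_{\mathrm{rSPR}}=1$ precisely when the smallest agreement forest has size two, so the two reductions establish the biconditional.

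The second, ``In particular'' half asks for a refined correspondence tracking which block $Y$ lands in, and it splits into the two cases (i) $Y\subseteq X'$ and (ii) $Y\subseteq(X\cup\{\rho\})-X'$, corresponding to whether the common subtree sits inside the moving part $X'$ or inside its complement. Here I would simply make the block-rewriting above explicit: in case (i), collapsing $Y$ inside $X'$ turns $X'$ into $(X'-Y)\cup\{y\}$ and leaves the complement untouched; in case (ii) it turns the complement into $((X\cup\{\rho\})-(X'\cup Y))\cup\{y\}$ and leaves $X'$ untouched. Each direction is the forward/backward transport of the previous paragraph specialised to the known location of $Y$, so these statements follow by the same verification.

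The main obstacle I anticipate is the claim that a maximum (size-two) agreement forest can always be chosen so that $Y$ is not split between the two blocks. One must rule out the possibility that the unique size-two forest genuinely separates some leaves of $Y$ from others. I would handle this by a local exchange argument: if a size-two agreement forest separated $Y$, then because $\cT(Y)$ is pendant and isomorphic to $\cT'(Y)$, one could merge the $Y$-fragments into a single block and re-split elsewhere without creating overlaps, contradicting either minimality of the forest or the vertex-disjointness condition (ii). Carefully checking that this merge does not violate condition (ii) in \emph{both} $\cT$ and $\cT'$ simultaneously is the delicate point, and it is exactly where the hypothesis $\cT|Y\cong\cT'|Y$ (rather than mere equality of the leaf set $Y$) does the work.
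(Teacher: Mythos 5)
First, a point of comparison: the paper does not prove this lemma at all --- it is imported as Proposition~3.2 of Bordewich and Semple \cite{bordewich05} --- so there is no in-paper proof to measure your attempt against. That said, your strategy is the natural (and, as far as the cited source goes, the standard) one: transport size-two agreement forests back and forth across the subtree reduction, and convert between ``an agreement forest of size two exists'' and ``$d_{\rm rSPR}=1$'' via the maximum-agreement-forest theorem. Your explicit case analysis for the ``in particular'' clause matches the statement, and you correctly note both that the reduction presupposes $Y$ is a common cluster and that the hypothesis $\cT|Y\cong\cT'|Y$ is what makes the collapsed leaf $y$ occupy the same position in $\cT_1$ and $\cT'_1$. (One small omission: to conclude $d_{\rm rSPR}(\cT_1,\cT'_1)=1$ rather than $0$ you also need $\cT_1\ncong\cT'_1$, which holds because $\cT_1\cong\cT'_1$ together with $\cT|Y\cong\cT'|Y$ would force $\cT\cong\cT'$.)

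The step that does not survive scrutiny as written is the one you flag yourself: that a size-two agreement forest $\{(X\cup\{\rho\})-X',X'\}$ for $\cT$ and $\cT'$ cannot split $Y$. Your proposed repair --- ``merge the $Y$-fragments into a single block and re-split elsewhere'' --- is an exchange argument suited to forests with many blocks; for a forest with exactly two blocks, merging the fragments yields the one-block forest, i.e.\ the assertion $\cT\cong\cT'$, and there is nowhere to ``re-split''. The claim should instead be proved directly, in two subcases. If $X'$ contains some $a\in Y$ and some $c\in X-Y$ while $Y-X'\neq\emptyset$, then in $\cT$ the path of $\cT(X')$ from $a$ to $c$ and the path of $\cT((X\cup\{\rho\})-X')$ from any $b\in Y-X'$ to $\rho$ both pass through the root of the pendant subtree $\cT(Y)$, so the two spanning subtrees are not vertex-disjoint and the partition is not an agreement forest. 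If instead $\emptyset\neq X'\subsetneq Y$, then $\cT|((X\cup\{\rho\})-X')\cong\cT'|((X\cup\{\rho\})-X')$ fixes the structure of both trees outside $Y$ together with the attachment point of the pendant subtree on $Y$ (since $(X\cup\{\rho\})-X'$ still meets $Y$), while $\cT|Y\cong\cT'|Y$ fixes that subtree's interior; together these give $\cT\cong\cT'$, contradicting distinctness. With this substitution your argument is complete.
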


We are now ready to prove the two directions of Theorem~\ref{t:main} beginning with the necessary direction.

\begin{lemma}
Let $\cN$ be a level-$1$ network on $X$, and let $k$ be the number of non-trivial reticulations of $\cN$. Then the rSPR graph of $\cN$ is isomorphic to $Q_k$ and has the nested subtree property.
\label{main1}
\end{lemma}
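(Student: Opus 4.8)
The plan is to prove the two required conclusions separately, but both will rest on reducing $\cN$ to its essential level-$1$ network $\cN^*$ via Observation~\ref{ob:essential}, so that $T(\cN)=T(\cN^*)$ and $|T(\cN^*)|=2^k$, where $k$ is the number of non-trivial reticulations. Since the rSPR graph of $\cN$ equals the rSPR graph of $\cN^*$, I may assume from the outset that $\cN$ has no trivial reticulations, so every reticulation lies on an underlying cycle with at least four vertices and $|T(\cN)|=2^k$. I would first establish the claim that the rSPR graph $G$ is isomorphic to $Q_k$. By Theorem~\ref{t:hamiltonian-cycle}, $Q_k$ is already a spanning subgraph of $G$ under the bijection $\psi$ sending each $k$-bit string to its encoded tree. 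To upgrade ``spanning subgraph'' to ``isomorphic'', the key step is to show that $G$ has no edges beyond those of $Q_k$: if $d_\rSPR(\cT_s,\cT_{s'})=1$ then the bit strings $s,s'$ must differ in exactly one coordinate, i.e. $d(s,s')=1$.

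To prove that extra edges cannot occur, I would fix an ordered binary assignment $(\cN,\phi,(v_1,\ldots,v_k))$ and analyse what a single rSPR move can do in terms of the reticulation arc choices. The intended reading is that each coordinate $i$ of the bit string corresponds to choosing which of the two parents of $v_i$ supplies the pendant subtree $\cT|C(v_i)$, and that flipping bit $i$ is exactly the rSPR move that prunes the subtree on $C_{\cN}(v_i)$ and regrafts it. The geometric content I would exploit is that, because $\cN$ is level-$1$ and the cycles are vertex-disjoint, the clusters $C(v_i)$ together with the source-vertex clusters $C(u_i)$ interact only in the three nested or disjoint ways depicted in Figure~\ref{fig:nested-subtree-prop}. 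This structure forces each moving subtree between two displayed trees to correspond to a single reticulation, so two trees that differ by one rSPR move can differ in the choice at only one reticulation; hence $d(s,s')=1$. Combined with the spanning-subgraph fact, this yields $G\cong Q_k$ and simultaneously identifies, for each $i$, the bit edge subset $E_i$ as precisely the set of edges realised by flipping the choice at $v_i$, which are all labelled by the common ordered pair $(X_i,Y_i)=(C(v_i),C(u_i))$.

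For the nested subtree property, I would then simply verify that for distinct reticulations $v_i,v_j$ the pairs $(C(v_i),C(u_i))$ and $(C(v_j),C(u_j))$ satisfy one of (I)--(III). This is the combinatorial heart and amounts to a case analysis of how two vertex-disjoint underlying cycles of a level-$1$ network can sit relative to each other in the rooted acyclic structure: either the cluster $C(u_i)$ (equivalently the cycle of $v_i$) is disjoint from $C(u_j)$, giving $Y_i\cap Y_j=\emptyset$; or the whole cycle of $v_i$ sits inside the pendant subtree $C(v_j)=X_j$ hanging below $v_j$, giving $Y_i\subseteq X_j$; or the cycle of $v_i$ is nested strictly inside the cycle of $v_j$ but its source lies on the part of $v_j$'s cycle not below the reticulation $v_j$, giving $Y_i\subset Y_j$ with $X_j\cap Y_i=\emptyset$. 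Since no two cycles share a vertex, these are the only possibilities, matching the three pictures in Figure~\ref{fig:nested-subtree-prop}.

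I expect the main obstacle to be the rigorous justification that a single rSPR move between two displayed trees must correspond to flipping the choice at exactly one reticulation, rather than some move that is not of this ``reticulation-arc'' form; establishing that $G$ has no edges outside $Q_k$ is what genuinely uses the level-$1$ hypothesis and is where the disjointness of the underlying cycles must be invoked carefully. A secondary technical point is correctly handling the reduction to $\cN^*$ and confirming that the identification of $E_i$ with a single reticulation is consistent across all of the (many) edges in each bit edge subset, so that the labelling by a common pair $(X_i,Y_i)$ is well defined, for which Lemma~\ref{subtree} on subtree reductions should let me peel off the nested pendant subtrees and reduce to smaller instances.
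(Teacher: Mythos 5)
Your overall strategy differs from the paper's: the paper proves a strengthened statement by induction on $k$, choosing a reticulation $v_k$ whose source vertex $u_k$ is at maximum distance from the root, splitting $\cN$ into $\cN_1$ and $\cN_2$ by deleting one or the other reticulation arc of $v_k$, and assembling the rSPR graph of $\cN$ from the two copies of $Q_{k-1}$ using Lemma~\ref{subtree}; you instead propose a direct global argument, getting the spanning-subgraph direction from Theorem~\ref{t:hamiltonian-cycle} together with Observation~\ref{ob:essential} (which is fine) and then arguing that $G$ has no edges beyond those of $Q_k$. The problem is that this second half — if $d_\rSPR(\cT_s,\cT_{s'})=1$ then $d(s,s')=1$ — is exactly the hard content of the lemma, and your proposal asserts it rather than proves it. The sentence ``this structure forces each moving subtree between two displayed trees to correspond to a single reticulation'' is the conclusion you need, not an argument for it, and you yourself flag it as the main obstacle. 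To close it you would have to rule out, for instance, that flipping the choices at two reticulations $v_i, v_j$ with nested cycles (cases (II) or (III)) can be realised by pruning and regrafting a single larger pendant subtree; this requires an agreement-forest argument of the kind the paper carries out (showing that any size-two agreement forest $\{(X\cup\{\rho\})-Z, Z\}$ between a tree of $G_1$ and a tree of $G_2$ forces $Z=X_k$, hence the partner relation), and it is not supplied by the cluster picture of Figure~\ref{fig:nested-subtree-prop} alone.

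A secondary gap: the nested subtree property requires more than that the pairs $(C(v_i),C(u_i))$ and $(C(v_j),C(u_j))$ pairwise satisfy one of (I)--(III). You must also verify that every edge in the bit edge subset $E_i$ admits $C(v_i)$ as a moving subtree with $C(u_i)$ the minimal common cluster properly containing it — i.e.\ that a single label is valid for all $2^{k-1}$ edges of $E_i$ simultaneously. You gesture at this (``which are all labelled by the common ordered pair''), and you correctly anticipate that Lemma~\ref{subtree} is the tool for peeling off pendant subtrees, but as written this consistency claim is also left unestablished; in the paper it comes out of the inductive construction, where the edges joining $G_1$ to $G_2$ are built with the label $(X_k,Y_k)$ and the labels of $G_1$ are transported to $G_2$ via the partner map. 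Your case analysis for (I)--(III) on two disjoint cycles is essentially the same as the paper's closing argument and is fine.
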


\begin{proof}
By Observation~\ref{ob:essential} and the paragraph prior to it, we may assume that $\cN$ has no trivial reticulations. Let $\{v_1, v_2, \ldots, v_k\}$ denote the set of reticulations of $\cN$ and, for all $i\in \{1, 2, \ldots, k\}$, let $u_i$ denote the source vertex of $v_i$. Furthermore, let $X_i$ and $Y_i$ denote the clusters $C(v_i)$ and $C(u_i)$, respectively, of $\cN$. For the proof of the lemma, we will prove a stronger statement. In particular, we will additionally show that there is an isomorphism that maps the rSPR graph of $\cN$ to $Q_k$ such that, for all $i\in \{1, 2, \ldots, k\}$, the subset of edges of $G$ corresponding to the $i$-th bit edge subset of $Q_k$ can each be labelled $(X_i, Y_i)$, and that this choice of labelling verifies that $G$ has the nested subtree property. The proof is by induction on $k$. If $k=0$, then $\cN$ is a phylogenetic $X$-tree and the rSPR graph of $\cN$ is isomorphic to $Q_0$, and the stronger statement, and thus the lemma, immediately follows. If $k=1$, then the rSPR graph of $\cN$ is isomorphic to $Q_1$, and the stronger statement, and therefore the lemma, immediately follows again. Now suppose that $k\ge 2$ and the stronger statement holds for all level-$1$ networks on $X$ with at most $k-1$ reticulations.

\Blue{For each $i\in \{1, 2, \ldots, k\}$, recall that $u_i$ is the source  vertex of $v_i$.} Without loss of generality, we may assume that $u_k$ is a source vertex at maximum distance from the root of $\cN$.  Let $p_1$ and $p_2$ denote the parents of $v_k$. Since $k\ge 2$, it follows by the maximality of $u_k$ that neither $p_1$ nor $p_2$ is the root of $\cN$. If either $(p_1, v_k)$ or $(p_2, v_k)$ is a shortcut, we may assume that $(p_1, v_k)$ is the shortcut. Note that at most one of $(p_1, v_k)$ and $(p_2, v_k)$ is a shortcut; otherwise, $(p_1, v_k)$ and $(p_2, v_k)$ are parallel arcs. Let $\cN_1$ be the level-$1$ network on $X$ obtained from $\cN$ by deleting the arc $(p_2, v_k)$ and suppressing the two resulting degree-two vertices. Since $\cN$ has $k$ reticulations, $\cN_1$ has $k-1$ reticulations and it follows by the induction assumption that there is an isomorphism $\varphi_1$ that maps the rSPR graph $G_1$ of $\cN_1$ to $Q_{k-1}$ such that, for all $i\in \{1, 2, \ldots, k-1\}$, if $E^1_i$ denotes the subset of edges of $G_1$ corresponding to the $i$-th bit edge subset of $Q_{k-1}$, then we can label each edge in $E^1_i$ with $(X_i, Y_i)$ and this labelling verifies that $G_1$ has the required nested subtree property of the stronger statement. Since $\cN$ is level-$1$, for each $i\in \{1, 2, \ldots, k-1\}$, the clusters of $u_i$ and $v_i$ are $Y_i$ and $X_i$, respectively, in $\cN_1$. \Blue{This setup is illustrated in Figure~\ref{fig:G_1} for when $k=3$, and neither $(p_1,v_3)$ nor $(p_2,v_3)$ is a shortcut in $\cN$. Furthermore, the same figure illustrates the rest of the inductive proof.}

\begin{figure}[t]
\center
\scalebox{1}{\input{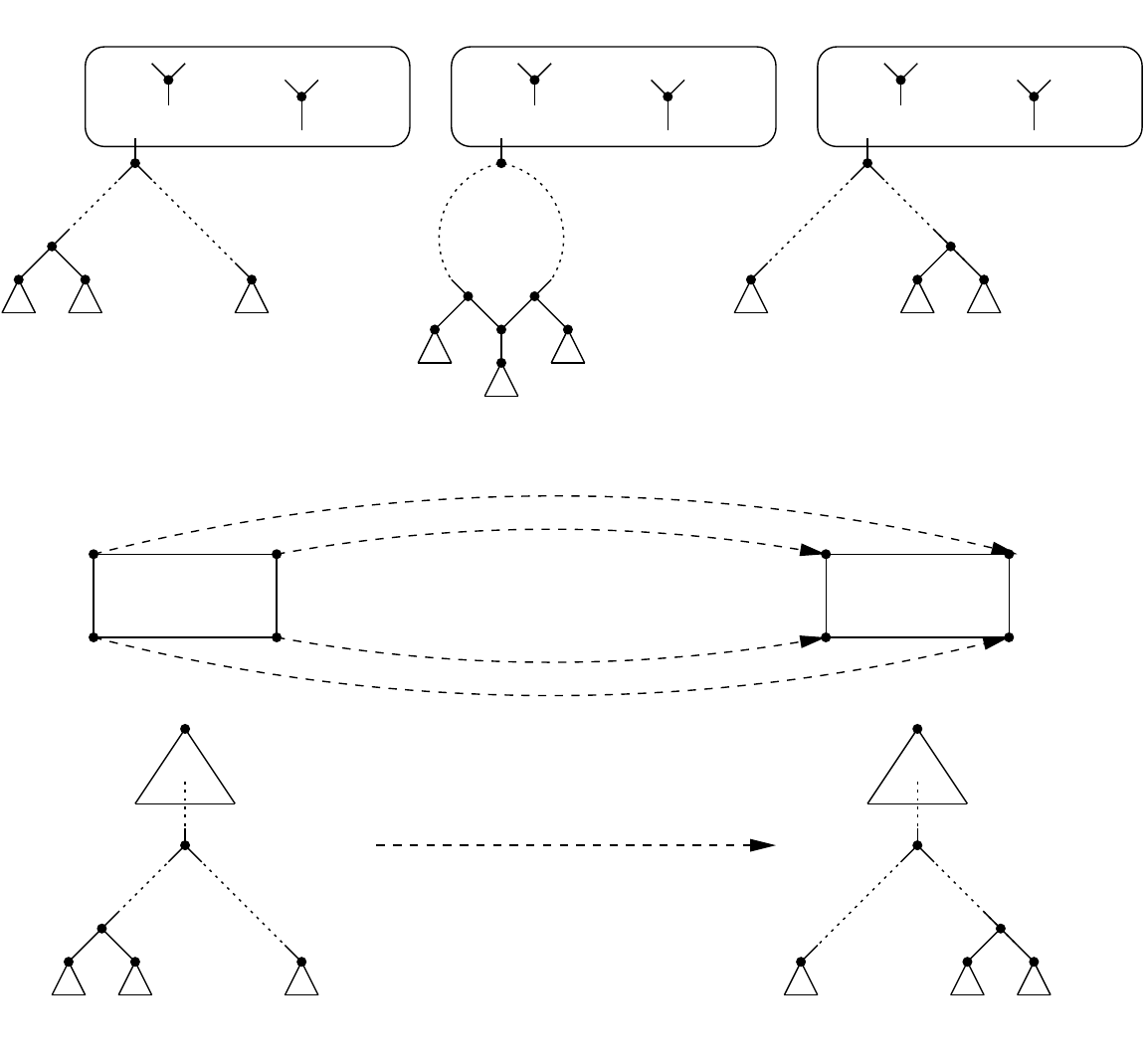_t}}
\caption{Setup as described in the proof of Lemma~\ref{main1} for when $k=3$, and neither $(p_1,v_3)$ nor $(p_2,v_3)$ is a shortcut in $\cN$. Triangles indicate subtrees of their respective level-$1$ network or phylogenetic tree. Furthermore, vertices whose clusters are $X_3$ or $Z_3-X_3$ are labelled accordingly.}
\label{fig:G_1}
\end{figure}

Now let $\cN_2$ be the level-$1$ network on $X$ obtained from $\cN$ by deleting the arc $(p_1, v_k)$ and suppressing the two resulting degree-two vertices. We next construct from $G_1$ the rSPR graph of $\cN_2$. By the maximality of $u_k$, if $\cT$ and $\cT'$ are vertices of $G_1$, then $\cT|Y_k\cong \cT'|Y_k$. Let $Z_k$ denote the cluster $C(p_2)$ of $\cN$. Note that, as $(p_2, v_k)$ is not a shortcut, $X_k\subset Z_k\subseteq Y_k$. Let $G_2$ be the graph obtained from $G_1$ by replacing each vertex $\cT$ of $G_1$ with the phylogenetic $X$-tree $\cS$ obtained from $\cT$ by a single rSPR operation that prunes the pendant subtree whose leaf set is $X_k$ and regrafts it to the arc directed into the vertex whose cluster is $Z_k-X_k$. \Blue{Again, this is illustrated in Figure~\ref{fig:G_1}.} For ease of reading, we say that $\cS$ is the vertex in $G_2$ whose {\em partner} is $\cT$. Evidently, $G_2$ is isomorphic to $Q_{k-1}$ under the isomorphism $\varphi_2$ that is obtained from $\varphi_1$ by replacing every vertex $\cT$ in $G_1$ with its partner $\cS$ in $G_2$. Furthermore, by construction, if $\cS$ and $\cS'$ are vertices of $G_2$, then $\cS|Y_k\cong \cS'|Y_k$.

We next show that $G_2$ is the rSPR graph of $\cN_2$ and the labelling of its edges verifies the stronger statement. Since the vertex set of $G_1$ is the display set of $\cN_1$, it follows by the maximality of $u_k$ and construction that the vertex set of $G_2$ is the display set of $\cN_2$. Let $\cT$ and $\cT'$ be vertices of $G_1$, and let $\cS$ and $\cS'$ be the partners of $\cT$ and $\cT'$, respectively, in $G_2$. Let $\cT_1$, $\cT'_1$, $\cS_1$, and $\cS'_1$ denote the phylogenetic trees obtained from $\cT$, $\cT'$, $\cS$, and $\cS'$, respectively, by applying a subtree reduction on $Y_k$ with replacement leaf $y_k$. First assume that $\cT$ and $\cT'$ are adjacent in $G_1$, and let $(X', Y')$ denote the ordered pair labelling the edge joining $\cT$ and $\cT'$. Then $\{(X\cup \{\rho\})-X', X'\}$ is an agreement forest for $\cT$ and $\cT'$, and $Y'$ is the minimal cluster in $C(\cT)\cap C(\cT')$ properly containing $X'$. Since $\cT|Y_k\cong \cT'|Y_k$, it follows by Lemma~\ref{subtree} that either (i) $Y_k\subseteq X'$ and
$$\{(X\cup \{\rho\})-X', (X'-Y_k)\cup \{y_k\}\}$$
is an agreement forest for $\cT_1$ and $\cT'_1$, or (ii) $Y_k\subseteq (X\cup \{\rho\})-X'$ and
$$\{((X\cup \{\rho\})-(X'\cup Y_k))\cup \{y_k\}, X'\}$$
is an agreement forest for $\cT_1$ and $\cT'_1$. If (i) holds, then $(Y'-Y_k)\cup \{y_k\}$ is the minimal cluster in $C(\cT_1)\cap C(\cT'_1)$ properly containing $(X'-Y_k)\cup \{y_k\}$. Furthermore, if (ii) holds, then, by the maximality of $u_k$, either $Y_k\subset Y'$, in which case, $(Y'-Y_k)\cup \{y_k\}$ is the minimal cluster in $C(\cT_1)\cap C(\cT'_1)$ properly containing $X'$, or $Y_k\cap Y'=\emptyset$, in which case, $Y'$ is the minimal cluster in $C(\cT_1)\cap C(\cT'_1)$ properly containing $X'$.

Now, by the single rSPR operation in which $\cS$ is obtained from $\cT$ and $\cS'$ is obtained from $\cT'$, it follows that $\cT_1\cong \cS_1$ and $\cT'_1\cong \cS'_1$. Thus, as $\cS|Y_k\cong \cS'|Y_k$, it follows by Lemma~\ref{subtree} that
$$\Green{\{(X\cup \{\rho\})-X', X'\}}$$
is an agreement forest for $\cS$ and $\cS'$, and $Y'$ is the minimal cluster in $C(\cS)\cap C(\cS')$ properly containing $X'$. Hence $\cS$ and $\cS'$ are correctly joined by an edge labelled $(X', Y')$ in $G_2$. Moreover, if $\cT$ and $\cT'$ are not adjacent in $G_1$, then $d_{\rm rSPR}(\cT, \cT') > 1$ and so, by Lemma~\ref{subtree}, $d_{\rm rSPR}(\cT_1, \cT'_1) > 1$. Thus, as $\cT_1\cong \cS_1$ and $\cT'_1\cong \cS'_1$, we have $d_{\rm rSPR}(\cS_1, \cS'_1) > 1$. Therefore, by Lemma~\ref{subtree}, $d_{\rm rSPR}(\cS, \cS') > 1$. We deduce that $G_2$ is the rSPR graph of $\cN_2$ and, as the labelling of the edges of $G_1$ verifies that $G_1$ has the nested subtree property, the labelling of the edges of $G_2$ has the required nested subtree property of the stronger statement.

We now construct the rSPR graph of $\cN$ from $G_1$ and $G_2$ as follows. Take $G_1$ and $G_2$ and, for each vertex $\cT$ in $G_1$, join $\cT$ to its partner $\cS$ in $G_2$ and label the edge $(X_k, Y_k)$. Call the resulting graph $G$. By construction, there is an isomorphism that maps $G$ to $Q_k$ such that, for all $i\in \{1, 2, \ldots, k\}$ each edge in the subset of edges of $G$ corresponding to the $i$-th bit subset of $Q_k$ is labelled $(X_i, Y_i)$. Furthermore, the end vertices of an edge joining a vertex $\cT$ in $G_1$ with a vertex $\cS$ in $G_2$ have an agreement forest $\{(X\cup \{\rho\})-X_k, X_k\}$, that is, $d_{\rm rSPR}(\cT, \cS)=1$, and $Y_k$ is the minimal cluster in $C(\cT)\cap C(\cS)$ containing $X'$.

We next show that if $\cT$ is a vertex in $G_1$ and $\cS$ is a vertex in $G_2$, but $\cS$ is not the partner of $\cT$, then $d_{\rm rSPR}(\cT, \cS) > 1$. Say $\cT$ and $\cS$ are such vertices, but $d_{\rm rSPR}(\cT, \cS)=1$. Then $\cT$ and $\cS$ have an agreement forest $\{(X\cup \Green{\{\rho\}})-Z, Z\}$, where $Z$ is a cluster of $\cT$ and $\cS$, and $\cT|Z\cong \cS|Z$. Since $\cT$ and $\cS$ have pendant subtrees with leaf set $Y_k$, but $\cT|Y_k\not\cong \cS|Y_k$, it follows that $Z\subset Y_k$. As $\cT$ is a vertex of $G_1$ and $\cS$ is a vertex of $G_2$, this implies that $Z=X_k$, in which case, $\cS$ is the partner of $\cT$, a contradiction. Thus, as $G_1$ and $G_2$ are the rSPR graphs of $\cN_1$ and $\cN_2$, respectively, \Blue{and a phylogenetic tree is displayed by $\cN$ if and only if it is either displayed by $\cN_1$ or displayed by $\cN_2$, it follows that} $G$ is the rSPR graph of $\cN$.

Lastly, let $i$ and $j$ be distinct elements of $\{1, 2, \ldots, k\}$. If $k\not\in \{i, j\}$, then, by construction, $(X_i, Y_i)$ and $(X_j, Y_j)$ satisfy one of (I)--(III) in the definition of the nested subtree property. So assume that $i=k$. Now $u_j$ is the source vertex of $v_j$, and $C(u_j)=Y_j$ and $C(v_j)=X_j$ in $\cN$. Say $Y_j\cap Y_k\ne\emptyset$. Then, by the choice of $u_k$, we have that $Y_k\subset Y_j$. Since $\cN$ is level-$1$, it is easily seen that either $Y_k\subseteq X_j$ or $X_j\cap Y_k=\emptyset$, and so $(X_j, Y_j)$ and $(X_k, Y_k)$ satisfy either (II) or (III). \Blue{Thus,} $G$ is the rSPR graph of $\cN$ and the labelling of the edges of $G$ has the required nested subtree property of the stronger statement. This completes the proof of the lemma.
\end{proof}

The next lemma is the converse of Lemma~\ref{main1}, and thereby completes the proof of Theorem~\ref{t:main}.

\begin{lemma}
Let $\cP$ be a set of phylogenetic $X$-trees and let $G$ be the rSPR graph of $\cP$. If $G$ is isomorphic to $Q_k$ for some non-negative integer $k$, and $G$ has the nested subtree property, then there is a level-$1$ network $\cN$ on $X$ whose display set is $\cP$.
\label{main2}
\end{lemma}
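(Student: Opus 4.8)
The plan is to prove the statement by induction on $k$, running the construction in the proof of Lemma~\ref{main1} in reverse: rather than deleting a reticulation arc to pass from $\cN$ to a network with fewer reticulations, I would build $\cN$ by \emph{adding} a reticulation to a network supplied by the inductive hypothesis. As in Lemma~\ref{main1}, I would strengthen the inductive statement so that the network $\cN$ produced has exactly $k$ non-trivial reticulations $v_1, \dots, v_k$ with source vertices $u_1, \dots, u_k$ satisfying $(C(v_i), C(u_i)) = (X_i, Y_i)$, where $(X_i, Y_i)$ is the common label of the edges in the $i$-th bit edge subset $E_i$ of $G$. The cases $k = 0$ (a single tree, realised by a phylogenetic $X$-tree) and $k = 1$ (two trees at rSPR distance one, realised by a level-$1$ network with one non-trivial reticulation whose reticulation cluster is $X_1$ and whose source vertex has cluster $Y_1$) form the base of the induction.

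For the inductive step with $k \ge 2$, I would first choose the coordinate to peel off. Using the nested subtree property, pick an index --- say $k$ --- for which $Y_k$ is minimal under inclusion among $Y_1, \dots, Y_k$. A short case analysis of (I)--(III) then shows that for every $j \ne k$ one has $Y_j \cap Y_k = \emptyset$, or $Y_k \subseteq X_j$, or $Y_k \subset Y_j$ with $X_j \cap Y_k = \emptyset$; in particular $X_k$ is never split by another label, so it behaves as a single pendant block relative to every other reticulation. Splitting $G \cong Q_k$ along $E_k$ yields two induced subgraphs $G_1, G_2$, each isomorphic to $Q_{k-1}$, whose vertex sets $\cP_1, \cP_2$ partition $\cP$, and each edge of $E_k$ joins a vertex of $G_1$ to its partner in $G_2$ via the move $(X_k, Y_k)$. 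Since $G$ is the rSPR graph of $\cP$, the induced subgraph $G_1$ is the rSPR graph of $\cP_1$; it is isomorphic to $Q_{k-1}$ and, carrying the labels $(X_i, Y_i)$ for $i \ne k$, inherits the nested subtree property from $G$. The inductive hypothesis then yields a level-$1$ network $\cN_1$ with $T(\cN_1) = \cP_1$ whose reticulations realise the pairs $(X_i, Y_i)$, $i \ne k$.

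I would then construct $\cN$ from $\cN_1$ by re-inserting the $k$-th reticulation. Because $X_k$ occurs as a pendant subtree in the same stable position throughout $\cP_1$, it is a cluster of $\cN_1$; let $w$ be the vertex with $C(w) = X_k$. I create $v_k$ by subdividing the arc entering $w$, and attach a second reticulation arc into $v_k$ from a new subdivision point placed on the arc of $\cN_1$ realising the regraft target of the move $(X_k, Y_k)$ inside the region with cluster $Y_k$; this produces an underlying cycle whose source vertex $u_k$ has $C(u_k) = Y_k$ and whose reticulation $v_k$ has $C(v_k) = X_k$. Selecting the original arc of $v_k$ recovers $\cN_1$ and hence displays $\cP_1$, while selecting the new arc performs exactly the move $(X_k, Y_k)$ on each tree, recovering $\cP_2$; thus $\cN$ displays $\cP_1 \cup \cP_2 = \cP$. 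Since $\cN$ is level-$1$ with $k$ non-trivial reticulations and no trivial ones, Observation~\ref{ob:essential} gives $|T(\cN)| = 2^k = |\cP|$, and as $\cP \subseteq T(\cN)$ this forces $T(\cN) = \cP$.

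The main obstacle is verifying that $\cN$ is genuinely level-$1$ --- that the new cycle through $u_k$ and $v_k$ shares no vertex with any existing cycle of $\cN_1$ --- and, simultaneously, that switching the arc of $v_k$ realises precisely the move $(X_k, Y_k)$ without perturbing the other reticulations. This is exactly where the three cases of the nested subtree property are needed: $Y_j \cap Y_k = \emptyset$ places the two cycles in disjoint parts of the network; $Y_k \subseteq X_j$ places the entire $v_k$-cycle below $v_j$ in a pendant block; and $Y_k \subset Y_j$ with $X_j \cap Y_k = \emptyset$ places it on a pendant subtree hanging off the $v_j$-cycle. Handling these interactions carefully, together with confirming that $X_k$ is indeed a cluster of $\cN_1$ and that the regraft target arc is well defined (via Lemma~\ref{subtree} applied to a subtree reduction on $Y_k$), is the technical heart of the argument.
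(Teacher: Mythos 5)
Your overall strategy coincides with the paper's: induct on $k$, choose a label $(X_k,Y_k)$ with $Y_k$ minimal under inclusion, split $G$ along $E_k$ into two copies of $Q_{k-1}$, obtain a level-$1$ network for one half by induction, and then insert one new reticulation to capture the other half. The point of divergence is how the inductive hypothesis is invoked. The paper does \emph{not} apply induction to $\cP_1$ on the leaf set $X$; it first proves that all trees in $\cP_1$ agree on $Y_k$, performs a subtree reduction on $Y_k$ with replacement leaf $y_k$, applies induction to the reduced collection $\cP_1'$ on $(X-Y_k)\cup\{y_k\}$, and only then re-attaches the (unique, tree-shaped) restriction $\cT|Y_k$ in place of $y_k$. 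This manoeuvre is not cosmetic: it guarantees by construction that in $\cN_1$ the portion of the network below the vertex with cluster $Y_k$ is a pendant \emph{tree}, so that the vertex with cluster $X_k$ and the regraft-target arc exist and are unique, and the newly created cycle lies entirely inside a pendant subtree of $\cN_1$ and therefore cannot meet any existing cycle. That is the entire content of the level-$1$ verification.

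By applying induction directly to $\cP_1$ on $X$, you forfeit this control, and the step you flag as ``the technical heart'' contains a genuine gap. Your justification that $X_k$ is a cluster of $\cN_1$ --- ``because $X_k$ occurs as a pendant subtree in the same stable position throughout $\cP_1$'' --- is not a valid inference: a set that is a cluster of every tree in the display set of a network, even with a constant restriction, need not be a (hardwired) cluster of the network itself, and nothing in the unstrengthened Lemma~\ref{main2} statement constrains where the reticulations of the network delivered by induction sit. Your proposed fix, a strengthened inductive hypothesis recording $(C(v_i),C(u_i))=(X_i,Y_i)$, does not by itself close the gap either: even knowing the reticulation and source clusters of $\cN_1$, you still must prove that no existing cycle enters the region spanned by $Y_k$, that $X_k$ and the regraft target are realised by actual arcs of $\cN_1$, and that switching the new reticulation arc performs exactly the move $(X_k,Y_k)$ on every tree of $\cP_1$ simultaneously (which is what makes $\cP_2\subseteq T(\cN)$ true before your counting argument can be applied). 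Moreover, the strengthened hypothesis is itself delicate to propagate, since the source vertex of the newly inserted cycle need not have cluster exactly $Y_k$ (in the paper's construction it has cluster $C(u)\subseteq Y_k$, which can be proper). All of these obligations are discharged in one stroke by the subtree-reduction-and-re-expansion device together with Lemma~\ref{subtree}; without it, your inductive step as written does not go through.
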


\begin{proof}
Suppose that there is an isomorphism from $G$ to $Q_k$ and that $G$ has the nested subtree property. Under this isomorphism, we may assume that, for all $i\in \{1, 2, \ldots, k\}$, each edge in $E_i$, the subset of edges of $G$ corresponding to the $i$-th bit edge subset of $Q_k$, has the label $(X_i, Y_i)$ and, for all distinct $i, j\in \{1, 2, \ldots, k\}$ the ordered pairs $(X_i, Y_i)$ and $(X_j, Y_j)$ satisfy one of (I)--(III). Note that, as $G$ is isomorphic to $Q_k$, we have $|\cP|=2^k$ for some non-negative integer $k$. The proof is by induction on $k$. If $k=0$, then $|\cP|=1$, and the lemma trivially holds by choosing $\cN$ to be the phylogenetic $X$-tree in $\cP$. Now suppose that $k\ge 1$ and \Blue{that} the lemma holds for all sets of phylogenetic trees of size $2^{k-1}$ with the same leaf set.

Amongst the ordered pairs $(X_1, Y_1), (X_2, Y_2), \ldots, (X_k, Y_k)$, choose $(X_i, Y_i)$ to be an ordered pair with the property that there is no $(X_j, Y_j)$ \Blue{for} which $Y_j\subset Y_i$. Such an ordered pair exists, otherwise $Y_i=Y_j$ for  some $i\neq j$ contradicting the assumption that the ordered pairs satisfy the nested subtree property. Furthermore, if there is an ordered pair $(X_j, Y_j)$ such that $X_j\cap Y_i\neq \emptyset$, then, as $G$ satisfies the nested subtree property, $Y_i\subseteq X_j$.

Consider the graph obtained from $G$ by deleting the edges in $E_i$. The resulting graph has exactly two components, $G_1$ and $G_2$ say, and each component is isomorphic to $Q_{k-1}$. Observe that, as $G$ has the nested subtree property, $G_1$ and $G_2$ also have this property. We next show that if $\cT$ and $\cT'$ are vertices of $G_1$, then $\cT|Y_i\cong \cT'|Y_i$. Certainly, $Y_i\in C(\cT)$ and $Y_i\in C(\cT')$ as all vertices of $G_1$ are incident with an edge labelled $(X_i, Y_i)$ in $G$. Assume that $\cT$ and $\cT'$ are adjacent in $G_1$. Let $(X_j, Y_j)$ denote the ordered pair labelling the edge joining $\cT$ and $\cT'$ in $G_1$, where $i\neq j$. Then
$$\{(X\cup \{\rho\})-X_j, X_j\}$$
is an agreement forest for $\cT$ and $\cT'$.  By the choice of $(X_i, Y_i)$, one of (I) $Y_i\cap Y_j=\emptyset$, (II) $Y_i\subseteq X_j$, and (III) $Y_i\subset Y_j$ and $X_j\cap Y_i=\emptyset$ holds. Thus either $Y_i\subseteq (X\cup \{\rho\})-X_j$ or $Y_i\subseteq X_j$. Therefore, as $\{(X\cup \{\rho\})-X_j, X_j\}$ is an agreement forest for $\cT$ and $\cT'$, it follows that in all cases $\cT|Y_i\cong \cT'|Y_i$. Since $Q_{k-1}$ is connected, we can repeatedly apply this argument to eventually show that $\cT|Y_i$ and $\cT'|Y_i$ are isomorphic for all vertices $\cT$ and $\cT'$ in $G_1$. Similarly, if $\cS$ and $\cS'$ are vertices of $G_2$, then $\cS|Y_i\cong \cS'|Y_i$. \Blue{For $k=i=3$ and $j=1$, the preceding argument is illustrated in Figure~\ref{fig:agreement-forest}.}

\begin{figure}[t]
\center
\scalebox{1}{\input{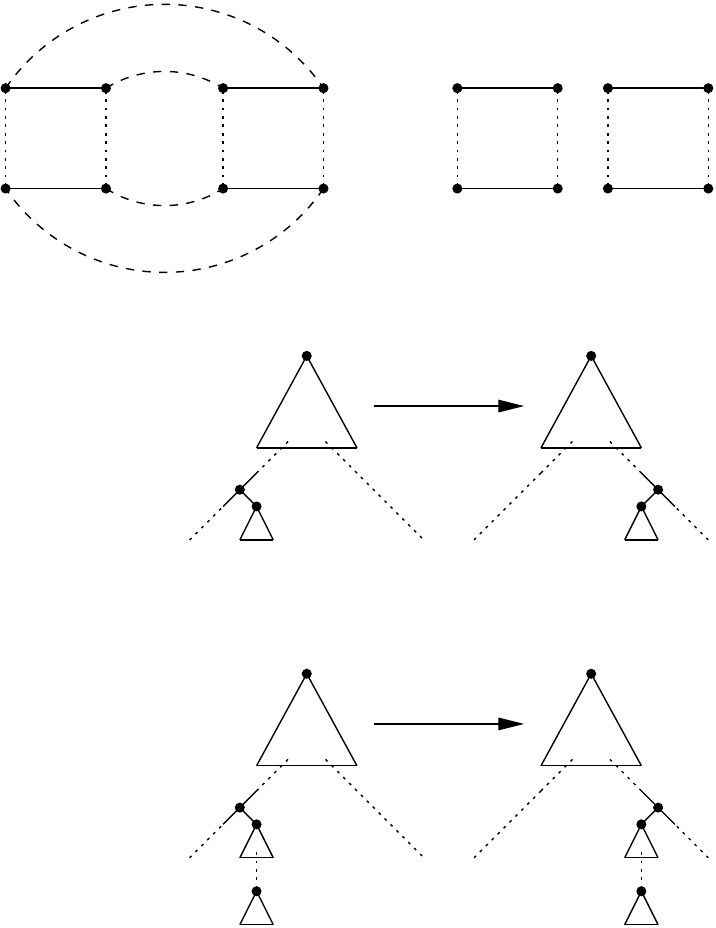_t}}
\caption{Setup as described in the proof of Lemma~\ref{main2} for when $k=i=3$, $j=1$, and $E_1$ contains the solid edges of $G$, $E_2$ contains the dotted edges of $G$, and $E_3$ contains the dashed edges of $G$. Vertices whose clusters are $X_1$ or $Y_3$ are labelled accordingly. If (I) or (III) in the definition of the nested subtree property applies to $(X_1,Y_1)$ and $(X_3,Y_3)$, then $X_1\cap Y_3=\emptyset$. Otherwise, if (II) applies, then $Y_3\subseteq X_1$.
As $\cT$ and $\cT'$ are joined by an edge in $G_1$ that is labelled $(X_1,Y_1)$, it follows that $\cT'$ can be obtained from $\cT$ by a single rSPR \Green{operation} that prunes and regrafts the  pendant subtree $\cT|X_1$. Hence, regardless of which of (I)--(III) applies, as $\{(X\cup\{\rho\})-X_1, X_1\}$ is an agreement forest for $\cT$ and $\cT'$, where $X$ is the leaf set of $\cT$ and $\cT'$, we have $\cT|Y_3\cong \cT'|Y_3$.}
\label{fig:agreement-forest}
\end{figure}

Let $\cP_1$ denote the set of vertices of $G_1$, and let $\cP'_1$ denote the collection of phylogenetic trees on $(X-Y_i)\cup \{y_i\}$ obtained by replacing each phylogenetic $X$-tree in $\cP_1$ with the phylogenetic tree on $(X-Y_i)\cup \{y_i\}$ resulting from a subtree reduction on $Y_i$, where the replacement leaf is $y_i$. By Lemma~\ref{subtree}, the rSPR graph $G'_1$ of $\cP'_1$ can be obtained from $G_1$ by replacing each vertex with the phylogenetic tree in $\cP'_1$ resulting from this subtree reduction, and replacing those ordered pairs $(X_j, Y_j)$ in which $Y_i\subseteq X_j$ with
$$((X_j-Y_i)\cup \{y_i\}, (Y_j-Y_i)\cup \{y_i\})$$
and in which $Y_i\subset Y_j$ and $X_j\cap Y_i=\emptyset$ with
$$(X_j, (Y_j-Y_i)\cup \{y_i\}).$$
Using the fact that $G_1$ has the nested subtree property, a routine check shows that $G'_1$ also has the nested subtree property and so, by the induction assumption, there is a level-$1$ network $\cN'_1$ on $(X-Y_i)\cup \{y_i\}$ whose display set is $\cP'_1$. Now let $\cN_1$ be the level-$1$ network on $X$ obtained from $\cN'_1$ by replacing $y_i$ with the (pendant) subtree $\cT|Y_i$, where $\cT\in \cP_1$. That is, $\cN_1$ is obtained from $\cN'_1$ by identifying the root of a phylogenetic tree isomorphic to $\cT|Y_i$ with the vertex $y_i$. Since the display set of $\cN'_1$ is $\cP'_1$, it follows that the display set of $\cN_1$ is $\cP_1$.

Let $\cS$ be a vertex of $G_2$, and recall that if $\cS'$ is also a vertex of $G_2$, then $\cS|Y_i\cong \cS'|Y_i$. Let $\cT$ be the unique vertex of $G_1$ such that $\{\cT,\cS\}$ is an edge in $G$. Furthermore, let $u$ (resp. $u'$) be the vertex of $\cT$ (resp. $\cS$) such that $X_i\subset C(u)$ (resp.  $X_i\subset C(u')$) and $u$ (resp. $u'$) has no child whose cluster is a proper superset of $X_i$. Note that $C(u)\subseteq Y_i$ and $C(u')\subseteq Y_i$ as $(X_i, Y_i)$ labels the edge $\{\cT,\cS\}$. Let $\cN$ be the phylogenetic network obtained from $\cN_1$ in one of the following two \Blue{ways:}
\begin{enumerate}[(i)]
\item If $(C(u)-X_i)\cap (C(u')-X_i)=\emptyset$ or $C(u')\subseteq C(u)$, subdivide the arc directed into the (unique) vertex whose cluster is $C(u')-X_i$, subdivide the arc directed into the (unique) vertex whose cluster is $X_i$, and adjoin an arc from the first to the second of these subdivisions.
\item If $C(u)\subseteq C(u')$, subdivide the arc directed into the (unique) vertex whose cluster is $C(u')$\Blue{,} subdivide the arc directed into the (unique) vertex whose cluster is $X_i$, and adjoin an arc from the first to the second of these subdivisions.
\end{enumerate}
The arcs that get subdivided in (i) and (ii) exist as $X_i$ is a cluster of $\cT$ and $\cS$, and $\cT|((X\cup \{\rho\})-X_i)\cong \cS|((X\cup \{\rho\})-X_i)$. By the choice of $(X_i, Y_i)$, the network $\cN$ is level-$1$. If $\cT$ is the unique vertex of $G_1$ in $G$ adjacent to $\cS$, then $\cT|(X-X_i)\cong\cS|(X-X_i)$ and $\cT|X_i\cong \cS|X_i$. Therefore, by construction, $\cN$ displays $\cS$. Since the choice of $\cS$ is arbitrary in the sense that $\cS|Y_i\cong S'|Y_i$ for all vertices $\cS'$ of $G_2$, the lemma now follows.
\end{proof}

\section{Reconstructing Level-$1$ Networks for a Given Display Set}\label{sec:level-1-algo}

Using the characterisation established in the last section, this section describes a polynomial-time algorithm, called {\sc Construct Level-$1$ Network}, that, given a collection $\cP$ of phylogenetic trees, reconstructs a level-$1$ network whose display set is $\cP$ or returns that no such network exists. As a corollary, we derive that in fact all such networks can be reconstructed. Before we present the algorithm, we establish several results on the properties of rSPR graphs and hypercubes that are needed to show that {\sc Construct Level-$1$ Network} runs in polynomial time.

Let $\cT$ and $\cT'$ be two phylogenetic $X$-trees, and suppose that $\cT'$ can be obtained from $\cT$ by a single rSPR operation. In particular, $\cT'$ can be obtained from $\cT$ by deleting an arc $(u, v)$ and reattaching the resulting rooted subtree that contains $v$ with a new arc $(u', v)$. Ignoring the suppressing of $u$, if the underlying path joining $u$ and $u'$ consists of at most two arcs, then we say that $\cT'$ has been obtained from $\cT$ by a {\em rooted nearest neighbour interchange} (rNNI) operation. Note that if this path consists of one arc, then $\cT\cong \cT'$. The {\em rNNI distance} between any two phylogenetic $X$-trees $\cT$ and $\cT'$ is the minimum number of rNNI operations that transforms $\cT$ into $\cT'$, and is denoted by $d_{\rm rNNI}(\cT, \cT')$. Like the rSPR operation, rNNI is reversible and $d_{\rm rNNI}(\cT, \cT')$ is well defined as there is always a sequence of rNNI operations that transforms $\cT$ into $\cT'$~\cite{bordewich05,robinson71}. 

Again, let $\cT$ and $\cT'$ be two phylogenetic $X$-trees with $d_\rSPR(\cT,\cT')=1$. We denote the set of moving subtrees for $\cT$ and $\cT'$ by $M(\cT, \cT')$. Furthermore, for a cluster $X'$ of $\cT$, we call the vertex, $v$ say, of $\cT$, such that $C(v)=X'$, the {\em parent} of $X'$ and the vertex, $u$ say, of $\cT$, such that $(u, v)$ is the arc of $\cT$ directed into $v$, the {\em grandparent} of $X'$. Lastly, if $X'\subseteq X$ and $|X'|=3$, then $\cT|X'$ is called a {\em rooted triple} of $\cT$. If $X'=\{a, b, c\}$ and the underlying paths joining $a$ and $b$, and joining the root of $\cT$ and $c$ are disjoint, then we denote the rooted triple $\cT|\{a, b, c\}$ by $ab|c$ or, equivalently, $ba|c$. The set of rooted triples of $\cT$ is denoted by $\cR(\cT)$. It is well known that $\cR(\cT)=\cR(\cT')$ if and only if $\cT\cong \cT'$ (see, for example, \cite{semple03}).

\begin{figure}[t]
\center
\scalebox{1}{\input{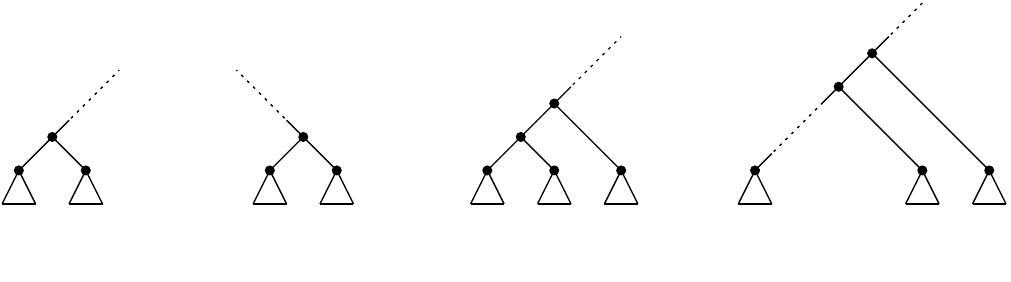_t}}
\caption{Setup of $\cT$ and $\cT'$ as used in the proof of Proposition~\ref{count} depending on whether $X_1$ is a moving subtree of Type~I (left) or Type~II (right). Triangles indicate pendant subtrees with at least one leaf.}
\label{fig:type-1-2}
\end{figure}

The next proposition bounds the number of moving subtree for two phylogenetic trees. More specifically, it shows that, if two phylogenetic trees have rSPR distance one, then there exists a unique moving subtree unless the two trees also have rNNI distance one in which case there are exactly three moving subtrees.

\begin{proposition}\label{p:one-or-three}
Let $\cT$ and $\cT'$ be two phylogenetic $X$-trees, and suppose that $d_{\rm rSPR}(\cT, \cT')=1$. Then $|M(\cT, \cT')|\in \{1, 3\}$. Moreover, $|M(\cT, \cT')|=3$ if and only if $d_{\rm rNNI}(\cT, \cT')=1$.
\label{count}
\end{proposition}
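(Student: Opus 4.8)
The plan is to reduce the problem to the case where $\cT$ and $\cT'$ differ only by the relocation of a single leaf, and then run a short local analysis at the two relevant arcs. First I would record what a moving subtree is combinatorially. Since $d_{\rm rSPR}(\cT,\cT')=1$, a maximum agreement forest has exactly two blocks by~\cite{bordewich05}, so $X'\in M(\cT,\cT')$ if and only if $\{(X\cup\{\rho\})-X',X'\}$ is an agreement forest; using the vertex-disjointness condition, this holds precisely when $X'$ is a cluster of both $\cT$ and $\cT'$ with $\cT|X'\cong\cT'|X'$ and $\cT|((X\cup\{\rho\})-X')\cong\cT'|((X\cup\{\rho\})-X')$. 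In particular $M(\cT,\cT')\neq\emptyset$; fix $X_1\in M(\cT,\cT')$ and apply a subtree reduction on $X_1$ with replacement leaf $x_1$ to obtain $\hat{\cT}$ and $\hat{\cT}'$ on $(X-X_1)\cup\{x_1\}$. By Lemma~\ref{subtree}, $d_{\rm rSPR}(\hat{\cT},\hat{\cT}')=1$, and the same lemma gives a bijection between the two-block agreement forests of $\cT,\cT'$ and those of $\hat{\cT},\hat{\cT}'$: any $X'\in M(\cT,\cT')$ satisfies $X_1\subseteq X'$ or $X_1\cap X'=\emptyset$ (as $X_1$ and $X'$ are clusters of $\cT$, hence nested or disjoint, and Lemma~\ref{subtree} excludes $X'\subsetneq X_1$), and it maps to $(X'-X_1)\cup\{x_1\}$ or to $X'$ accordingly. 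Hence $|M(\cT,\cT')|=|M(\hat{\cT},\hat{\cT}')|$. Since $\{x_1\}\in M(\hat{\cT},\hat{\cT}')$, the trees $\hat{\cT}$ and $\hat{\cT}'$ agree off $x_1$: they restrict to a common tree $\cT^-$ on $(X-X_1)\cup\{\rho\}$ and differ only in that $x_1$ hangs from an arc $e$ of $\cT^-$ in $\hat{\cT}$ and from a distinct arc $f$ in $\hat{\cT}'$. A routine check (expanding a leaf to a pendant subtree, or contracting a pendant subtree to a leaf, does not change whether two trees differ by a single rNNI) shows $d_{\rm rNNI}(\cT,\cT')=1$ iff $d_{\rm rNNI}(\hat{\cT},\hat{\cT}')=1$, so it suffices to prove the proposition for $\hat{\cT},\hat{\cT}'$.

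Next I would classify the moving subtrees in this single-leaf picture. I would first rule out any moving subtree properly containing $x_1$: if $X''=\{x_1\}\cup S$ with $S\neq\emptyset$ were a cluster of both $\hat{\cT}$ and $\hat{\cT}'$, then both $e$ and $f$ would lie inside the subtree $\cT^-(S)$, so $\hat{\cT}|X''$ and $\hat{\cT}'|X''$ would be the leaf-labelled tree $\cT^-(S)$ with $x_1$ attached at the two distinct arcs $e$ and $f$, which are non-isomorphic. Thus every moving subtree other than $\{x_1\}$ is a cluster $X''$ of $\cT^-$ disjoint from $x_1$; for such $X''$ the complement restrictions already agree, so $X''\in M(\hat{\cT},\hat{\cT}')$ iff $\hat{\cT}-X''\cong\hat{\cT}'-X''$, that is, iff deleting the pendant subtree with leaf set $X''$ identifies the attachment arcs $e$ and $f$.

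Finally I would analyse the relative position of $e$ and $f$. If $e$ and $f$ have no common end vertex, then no single pendant-subtree deletion can identify them, so $M(\hat{\cT},\hat{\cT}')=\{x_1\}$ and $|M|=1$, and the relocation of $x_1$ is not a single rNNI. If $e$ and $f$ share an end vertex $c$, then locally the two trees realise a rooted NNI across the edges at $c$, the relocation is a single rNNI, and exactly two clusters---the two children of $c$ in $\cT^-$---identify $e$ and $f$; together with $\{x_1\}$ this yields $|M|=3$. Transporting back through the reduction gives $|M(\cT,\cT')|\in\{1,3\}$ with $|M(\cT,\cT')|=3$ iff $d_{\rm rNNI}(\cT,\cT')=1$. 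I expect the main obstacle to be this final step: verifying that precisely the two child clusters of $c$ work (never one, never more) when $e$ and $f$ are adjacent and none works otherwise, in particular handling the subcase in which one of $e,f$ is the parent arc of the deleted cluster, so that its deletion relocates $x_1$ rather than merely merging two arcs, and confirming that adjacency of $e$ and $f$ is exactly the single-rNNI condition.
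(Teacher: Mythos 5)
Your proposal is correct, but it takes a genuinely different route from the paper. The paper works directly with $\cT$ and $\cT'$: it fixes $X_1\in M(\cT,\cT')$, introduces sets $W$ and $Z$ determined by the grandparents of $X_1$ in the two trees (splitting into ``Type~I'' and ``Type~II'' moving subtrees), and then uses rooted triples such as $x_1w|z$ to force any second moving subtree $X_2$ to equal $W$ or to contain $Z$ and avoid $X_1\cup W$; the dichotomy $|M(\cT,\cT')|\in\{1,3\}$ then falls out of whether $C_{\cT}(u)-(X_1\cup W\cup Z)$ is empty, which is exactly the rNNI condition. You instead normalise first: a subtree reduction on $X_1$ together with Lemma~\ref{subtree} gives a bijection from $M(\cT,\cT')$ to $M(\hat{\cT},\hat{\cT}')$ with $\{x_1\}$ a moving subtree of the reduced pair, so everything collapses to the purely local question of where a single leaf attaches to the common tree $\cT^-$ --- namely to two distinct arcs $e$ and $f$ --- and the count becomes ``$e,f$ adjacent gives three, otherwise one''. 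This buys a cleaner and more transparent local analysis with no rooted-triple bookkeeping, at the price of two extra verifications the paper does not need: (a) that $d_{\rm rNNI}=1$ is preserved in both directions by the subtree reduction --- the contraction direction is not quite as ``routine'' as you suggest, since one must argue that the two-arc path of the rNNI move can be chosen to avoid the pendant subtree $\cT(X_1)$ (this does follow from $X_1$ being a cluster of both trees with isomorphic restrictions, but it needs a sentence); and (b) the relocation subcase you already flag, where $e$ or $f$ is the arc directly above the deleted cluster, so that deleting that cluster slides $x_1$ onto the merged arc rather than leaving it fixed. Both check out: deleting a pendant cluster with parent vertex $p$ identifies attachment arcs precisely when both lie among the three arcs incident with $p$, so exactly the two child clusters of the common endpoint of $e$ and $f$ work when $e$ and $f$ are adjacent, and nothing works otherwise. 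With those two points written out, your argument is complete.
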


\begin{proof}
Let $\rho$ denote the root of $\cT$ and $\cT'$. Since $d_{\rm rSPR}(\cT, \cT')=1$, it follows that $|M(\cT, \cT')|\ge 1$. Let $X_1\in M(\cT, \cT')$, and let $v$ and $v'$ be the grandparents of $X_1$ in $\cT$ and $\cT'$, respectively. Furthermore, let $u$ and $u'$ be the vertices of $\cT$ and $\cT'$, respectively, such that $C_{\cT}(u)$ and $C_{\cT'}(u')$ is the minimal cluster in $C(\cT)\cap C(\cT')$ properly containing $X_1$. Let $W=C_{\cT}(v)-X_1$. Since rNNI is reversible, we may view the rSPR operations corresponding to the moving subtrees in $M(\cT, \cT')$ as transforming $\cT$ into $\cT'$. Furthermore, we may also assume that either $W\cap C_{\cT'}(v')=\emptyset$, in which case, we say $X_1$ is a {\em Type~I} moving subtree, or $W\subseteq C_{\cT'}(v')$, in which case, we say $X_1$ is a {\em Type~II} moving subtree. 

Now, if $X_1$ is a Type~I moving subtree, let $Z=C_{\cT'}(v')-X_1$, and if $X_1$ is a Type~II moving subtree, let $Z$ be the cluster of $\cT'$ such that $Z\cap (X_1\cup W)=\emptyset$, and the grandparent, $g$ say, of $Z$ in $\cT'$ is on the path from $v'$ to the parent of $W$ and $(v', g)$ is an arc in $\cT'$. The setup of $\cT$ and $\cT'$ depending on whether $X_1$ is a moving subtree of Type~I or Type~II is illustrated in Figure~\ref{fig:type-1-2}. Note that $g$ exists; otherwise, $d_{\rm rSPR}(\cT, \cT')=0$. Observe that $W$ and $Z$ are both clusters of $\cT$ and $\cT'$, and $X_1$, $W$, and $Z$ are (pairwise) disjoint subsets of $X$. Moreover, if $x_1\in X_1$, $w\in W$, and $z\in Z$, then $x_1w|z\in \cR(\cT)$, and either $x_1z|w\in \cR(\cT')$ if $X_1$ is a Type I moving subtree or $wz|x_1\in \cR(\cT')$ if $X_1$ is a Type II moving subtree.

Now suppose that $X_2\in M(\cT, \cT')-\{X_1\}$. It follows by the observations at the end of the last paragraph that $(X_1\cup W\cup Z)\cap X_2\neq \emptyset$. Otherwise, $\{(X\cup \{\rho\})-X_2, X_2\}$ is not an agreement forest for $\cT$ and $\cT'$ as $x_1w|z$ is a rooted triple of $\cT|((X\cup \{\rho\})-X_2)$ but $x_1w|z$ is not a rooted triple of $\cT'|((X\cup \{\rho\})- X_2)$. Furthermore, using the same observations, a similar analysis establishes that either $X_2=W$, or $Z\subseteq X_2$ and $(X_1\cup W)\cap X_2=\emptyset$.

Assume that $d_{\rm rNNI}(\cT, \cT')\neq 1$. Then $C_{\cT}(u)-(X_1\cup W\cup Z)$ is non-empty. Let $\ell\in C_{\cT}(u)-(X_1\cup W\cup Z)$, and note that $x_1w|\ell\in \cR(\cT)$, but $x_1w|\ell\not\in \cR(\cT')$. Also, if $X_1$ is a Type~I moving subtree, then either $x_1\ell|z\in \cR(\cT)$ and $x_1z|\ell\in \cR(\cT')$, or $z\ell|x_1\in \cR(\cT)$ and $x_1z|\ell\in \cR(\cT')$, while if $X_1$ is a Type~II moving subtree, then $x_1\ell|z\in \cR(\cT)$ and $\ell z|x_1\in \cR(\cT')$. Thus $X_2\neq W$ as $\{(X\cup \{\rho\})-W, W\}$ is not an agreement forest for $\cT$ and $\cT'$, and so $Z\subseteq X_2$ and $(X_1\cup W)\cap X_2=\emptyset$. Therefore, as $x_1w|\ell\in \cR(\cT)$ but $x_1w|\ell\not\in \cR(\cT')$, it follows that $Z\cup \{\ell\}\subseteq X_2$. But then, as $(X_1\cup W)\cap X_2=\emptyset$, we have $z\ell|x_1\in \cR(\cT)$ and $z\ell|x_1\in \cR(\cT')$, a contradiction. Thus if $d_{\rm rNNI}(\cT, \cT')\neq 1$, then $|M(\cT, \cT')|=1$.

Now assume that $d_{\rm rNNI}(\cT, \cT')=1$. Then
$$C_{\cT}(u)=C_{\cT'}(u')=X_1\cup W\cup Z,$$
and it is easily checked that each of $X_1$, $W$, and $Z$ is a moving subtree for $\cT$ and $\cT'$. Moreover, these are the only moving subtrees for $\cT$ and $\cT'$ as we argued earlier that if $X_2\in M(\cT, \cT')-\{X_1\}$, then either $X_2=W$, or $Z\subseteq X_2$ and $(X_1\cup W)\cap X_2=\emptyset$. Hence if $d_{\rm rNNI}(\cT, \cT')=1$, then $|M(\cT, \cT')|=3$. This completes the proof of the proposition.
\end{proof}

The next two lemmas and Proposition~\ref{unique-subsets} establish properties of hypercubes and graphs that are isomorphic to a hypercube. Let $H$ be a graph, and suppose that $H$ is isomorphic to $Q_k$ for some non-negative integer $k$. Then there is an isomorphism $\varphi$ from the set of vertices of $H$ to the set $B$ of all $k$-bit strings such that if $u$ and $v$ are adjacent vertices in $H$, then $\varphi(u)$ and $\varphi(v)$ differ in exactly one position. Under $\varphi$, the $i$-th bit edge subset of $H$ is the subset of edges whose end vertices differ precisely in the $i$-th position. However, $\varphi$ is not the only such isomorphism. We next show that, regardless of the isomorphism, the collection of subsets of the edge set of $H$ corresponding to the bit edge subsets of $Q_k$ is always the same. The following result can be found, for example, in~\cite{laborde82}.

\begin{lemma}
Let $k$ be a non-negative integer, and let $e$ and $f$ be adjacent edges of $Q_k$. Then $Q_k$ has a unique $4$-cycle containing $e$ and $f$.
\label{4-cycle} 
\end{lemma}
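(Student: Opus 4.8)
The plan is to argue directly in terms of bit strings, using that $Q_k$ is the graph on all $k$-bit strings in which two strings are joined precisely when their Hamming distance is $1$. First I would observe that, since $e$ and $f$ are \emph{adjacent} edges, they share a common vertex; call it $v$. Because $e\neq f$, the two edges flip two distinct coordinates of $v$: write $e=\{v,a\}$ and $f=\{v,b\}$, where $a$ (resp.\ $b$) is obtained from $v$ by flipping its $i$-th (resp.\ $j$-th) bit, with $i\neq j$. In particular, such a pair can only exist when $k\geq 2$, so the statement is vacuously true for $k\in\{0,1\}$.

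For existence, let $w$ be the $k$-bit string obtained from $v$ by flipping \emph{both} the $i$-th and the $j$-th bit. Then $d(a,w)=1$ (they differ only in coordinate $j$) and $d(b,w)=1$ (they differ only in coordinate $i$), so $\{a,w\}$ and $\{b,w\}$ are edges of $Q_k$. Hence $v,a,w,b,v$ is a $4$-cycle containing both $e$ and $f$.

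For uniqueness, I would combine a degree count with a short case analysis. In any $4$-cycle through $e$ and $f$, the vertex $v$ has degree exactly $2$, and its two incident cycle-edges are both edges of the cycle incident to $v$; since there are exactly two such edges and $e\neq f$, they must be precisely $e$ and $f$. Thus the two neighbours of $v$ in the cycle are $a$ and $b$, and the remaining (antipodal) vertex $w'$ must satisfy $d(a,w')=d(b,w')=1$ and $w'\neq v$. Writing $w'$ as $a$ with some coordinate $\ell$ flipped: the choice $\ell=i$ gives $w'=v$ (excluded), and any $\ell\notin\{i,j\}$ forces $d(b,w')=3$, since $b$ and $w'$ would then differ in the three distinct coordinates $i$, $j$, and $\ell$; hence $\ell=j$, i.e.\ $w'=w$. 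Therefore the only $4$-cycle through $e$ and $f$ is the one constructed above.

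I do not expect a genuine obstacle here: the heart of the matter is simply that two coordinates of a binary string can be flipped in an order-independent way, yielding a single antipodal vertex. The only points requiring care are ruling out the degenerate choice $w'=v$ and confirming that the two cycle-edges at $v$ are forced to be $e$ and $f$ (rather than, say, $e$ together with a chord), and both follow immediately from the degree-$2$ structure of a $4$-cycle.
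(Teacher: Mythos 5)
Your proof is correct. Note that the paper does not actually prove this lemma at all: it is stated as a known result and attributed to Laborde and Hebbare (the sentence preceding the lemma reads ``The following result can be found, for example, in~\cite{laborde82}''). So there is no in-paper argument to compare against; your contribution is a self-contained elementary proof where the authors were content with a citation. Your argument is the natural one and it is complete: existence comes from flipping the two coordinates $i$ and $j$ in either order to reach the same antipodal vertex $w$, and uniqueness comes from the observation that a $4$-cycle through $e$ and $f$ must use $e$ and $f$ as the two cycle-edges at $v$ (since $v$ has degree $2$ in the cycle), after which the fourth vertex is pinned down by the Hamming-distance computation ruling out $\ell=i$ (which gives back $v$) and $\ell\notin\{i,j\}$ (which gives $d(b,w')=3$). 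The handling of the degenerate cases $k\in\{0,1\}$ and the exclusion of $w'=v$ are both addressed, so nothing is missing.
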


\begin{lemma}
Let $H$ be a graph, and suppose that $\varphi$ is a (graph) isomorphism between $H$ and $Q_k$ for some non-negative integer $k\ge 2$. If $v_1, v_2, v_3, v_4, v_1$ is a $4$-cycle of $H$, then the edges $\{\varphi(v_1), \varphi(v_2)\}$ and $\{\varphi(v_3), \varphi(v_4)\}$ are in the same bit edge subset of $Q_k$.
\label{opposite}
\end{lemma}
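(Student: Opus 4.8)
The plan is to transport the $4$-cycle across $\varphi$ into $Q_k$ and then read off, for each of its four edges, the single coordinate in which the endpoints differ. Write $w_\ell=\varphi(v_\ell)$ for $\ell\in\{1,2,3,4\}$. Since $\varphi$ is an isomorphism, $w_1,w_2,w_3,w_4,w_1$ is a $4$-cycle of $Q_k$; in particular the $w_\ell$ are pairwise distinct $k$-bit strings and $d(w_\ell,w_{\ell+1})=1$ for all $\ell$ (indices read modulo $4$). For each edge let $i_\ell$ denote the unique coordinate in which $w_\ell$ and $w_{\ell+1}$ differ, so that $\{w_\ell,w_{\ell+1}\}$ lies in the $i_\ell$-th bit edge subset of $Q_k$. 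Unwinding the definition of the bit edge subsets, the assertion of the lemma is exactly that $i_1=i_3$.

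First I would record two elementary facts about the sequence $(i_1,i_2,i_3,i_4)$. Traversing the cycle returns us to $w_1$, and each step flips precisely one coordinate of the current string; hence every coordinate is flipped an even number of times in total, which is to say that in the multiset $\{i_1,i_2,i_3,i_4\}$ each value occurs an even number of times. Secondly, no two cyclically consecutive labels can coincide: if $i_\ell=i_{\ell+1}$ for some $\ell$, then flipping the same coordinate twice in succession from $w_\ell$ returns to it, forcing $w_{\ell+2}=w_\ell$ and contradicting the distinctness of the four vertices.

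Combining these, the multiset $\{i_1,i_2,i_3,i_4\}$ cannot consist of a single repeated value (that would give $i_1=i_2$, excluded by the second fact, or equivalently would force $w_3=w_1$), so by the even-occurrence fact it consists of exactly two distinct coordinates, each appearing twice. The only way to place two copies of one coordinate and two copies of another around a $4$-cycle without any two adjacent copies being equal is in the alternating pattern, whence $i_1=i_3$ and $i_2=i_4$; this yields the lemma. The same structural conclusion also follows from Lemma~\ref{4-cycle}, since the unique $4$-cycle through the adjacent edges $\{w_1,w_2\}$ and $\{w_2,w_3\}$ must be the one at hand and visibly has this alternating form. The only point requiring care, and the mild obstacle, is the parity and arrangement bookkeeping, namely rigorously ruling out the all-equal and adjacent-equal configurations using the distinctness of the cycle's vertices; everything else is immediate.
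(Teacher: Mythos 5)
Your proof is correct and follows essentially the same route as the paper's: transport the $4$-cycle into $Q_k$, track which coordinate flips along each edge, and conclude that the flips must alternate between two distinct coordinates, so opposite edges lie in the same bit edge subset. The only cosmetic difference is that you obtain the alternating pattern from a parity count over the closed walk combined with the distinctness of the four vertices, whereas the paper reads it off from the observation that the non-adjacent opposite vertices $\varphi(v_1)$ and $\varphi(v_3)$ differ in exactly the two bits determined by the edges at $v_1$; both arguments are sound.
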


\begin{proof}
Let $v_1, v_2, v_3, v_4, v_1$ be a $4$-cycle of $H$. For all $i\in \{1, 2, 3, 4\}$, we have $\varphi(v_i)$ is a $k$-bit string. Since $v_1$ and $v_2$, and $v_1$ and $v_4$ are adjacent, it follows that $\varphi(v_1)$ and $\varphi(v_2)$ differ in precisely one bit, say the $i$-th bit, and $\varphi(v_1)$ and $\varphi(v_4)$ differ in precisely one bit, say the $j$-th bit. Furthermore, as $\varphi(v_2)\neq \varphi(v_4)$, we have $i\neq j$. Also, as $H$ is isomorphic to $Q_k$, the vertices $v_1$ and $v_3$ are not adjacent in $H$, and so $\varphi(v_1)$ and $\varphi(v_3)$ differ in precisely two bits. But $v_3$ is adjacent to $v_2$ and $v_4$, so these two bits are the $i$-th and $j$-th bits. Hence, without loss of generality, we may assume that the $i$-th and $j$-th bits of $\varphi(v_1)$, $\varphi(v_2)$, $\varphi(v_3)$, and $\varphi(v_4)$ are $0\, 0$, $1\, 0$, $1\, 1$, and $0\, 1$, respectively. It now follows that $\{\varphi(v_1), \varphi(v_2)\}$ and $\{\varphi(v_3), \varphi(v_4)\}$ are in the same bit edge subset of $Q_k$.
\end{proof}

\begin{proposition}
Let $H$ be a graph, and suppose that $\varphi_1$ and $\varphi_2$ are both (graph) isomorphisms between $H$ and $Q_k$ for some non-negative integer $k$. Then
$$\{\varphi_1^{-1}(E_1), \varphi_1^{-1}(E_2), \ldots, \varphi_1^{-1}(E_k)\}=\{\varphi_2^{-1}(E_1), \varphi_2^{-1}(E_2), \ldots, \varphi_2^{-1}(E_k)\}$$
where, for all $i\in \{1, 2, \ldots, k\}$, the sets $\varphi_1^{-1}(E_i)$ and $\varphi_2^{-1}(E_i)$ are the subsets of $E(H)$ mapped to $E_i$ under $\varphi_1$ and $\varphi_2$, respectively.
\label{unique-subsets}
\end{proposition}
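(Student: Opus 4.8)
The plan is to exhibit a single partition of $E(H)$ that is intrinsic to the abstract graph $H$ (involving no isomorphism) and show that \emph{both} collections $\{\varphi_1^{-1}(E_1),\ldots,\varphi_1^{-1}(E_k)\}$ and $\{\varphi_2^{-1}(E_1),\ldots,\varphi_2^{-1}(E_k)\}$ coincide with it. First I would dispose of the degenerate cases $k\in\{0,1\}$: here $H$ has at most one edge, so each collection is either empty or consists solely of the set $E(H)$, and the equality is immediate. So assume $k\ge 2$. Define a relation $\sim$ on $E(H)$ by setting $e\sim f$ whenever $e$ and $f$ are opposite (that is, non-adjacent) edges of a common $4$-cycle of $H$, and let $\approx$ be the equivalence relation obtained as its reflexive-transitive closure. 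The crucial feature of $\approx$ is that it is an invariant of $H$ alone. The goal is then to prove that, for \emph{any} graph isomorphism $\varphi\colon H\to Q_k$, the equivalence classes of $\approx$ are exactly $\varphi^{-1}(E_1),\ldots,\varphi^{-1}(E_k)$; applying this to $\varphi_1$ and to $\varphi_2$ and invoking the $\varphi$-independence of $\approx$ then yields the proposition.

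To establish that the classes of $\approx$ are the sets $\varphi^{-1}(E_i)$, I would transport the problem to $Q_k$ along $\varphi$. Since $\varphi$ preserves adjacency, it carries $4$-cycles of $H$ to $4$-cycles of $Q_k$ and opposite edges to opposite edges; hence $e\sim f$ in $H$ if and only if $\varphi(e)$ and $\varphi(f)$ are opposite edges of a $4$-cycle of $Q_k$. It therefore suffices to prove that the reflexive-transitive closure of this opposite-edge relation on $Q_k$ has the bit edge subsets $E_1,\ldots,E_k$ as its classes. One inclusion is exactly Lemma~\ref{opposite} with $H=Q_k$ and $\varphi$ the identity: opposite edges of any $4$-cycle of $Q_k$ lie in the same bit edge subset, so each class of the closure is contained in some $E_i$.

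The hard part, and the heart of the argument, is the reverse inclusion: that each $E_i$ is a \emph{single} class, i.e.\ any two edges of $E_i$ are joined by a chain of opposite-edge steps. Here I would construct the chain explicitly. For a $k$-bit string $a$ and a position $p$, write $a^{p}$ for the string obtained from $a$ by flipping its $p$-th bit. Represent each edge of $E_i$ by its endpoint whose $i$-th bit is $0$, and take two such representatives $a$ and $b$; they agree in the $i$-th bit and differ in some (possibly empty) set of other positions. For a single position $j\neq i$ in which they differ, the four strings $a,\ a^{i},\ (a^{i})^{j},\ a^{j}$ form a $4$-cycle of $Q_k$, whose two opposite $i$-flipping edges are the edge of $E_i$ at $a$ and the edge of $E_i$ at $a^{j}$; Lemma~\ref{4-cycle} guarantees that this $4$-cycle is the unique one through the two $j$-flipping edges meeting at $a$. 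Flipping the differing positions of $a$ toward $b$ one at a time then produces a chain of opposite-edge steps lying entirely within $E_i$ and connecting the edge at $a$ to the edge at $b$, so $E_i$ is a single class.

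Combining the two inclusions shows that the classes of the closure on $Q_k$ are precisely $E_1,\ldots,E_k$, whence the classes of $\approx$ on $H$ are precisely $\varphi^{-1}(E_1),\ldots,\varphi^{-1}(E_k)$ for every isomorphism $\varphi$. Since $\approx$ does not depend on the choice of isomorphism, taking $\varphi=\varphi_1$ and $\varphi=\varphi_2$ gives
$$\{\varphi_1^{-1}(E_1),\ldots,\varphi_1^{-1}(E_k)\}=\{\text{classes of }\approx\}=\{\varphi_2^{-1}(E_1),\ldots,\varphi_2^{-1}(E_k)\},$$
as required. I expect the only real obstacle to be the reverse inclusion above; everything else is formal transport of structure along an isomorphism together with the two cited lemmas.
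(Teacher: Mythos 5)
Your proof is correct, but it takes a genuinely different route from the paper's. You introduce the intrinsic equivalence relation $\approx$ on $E(H)$ generated by ``opposite edges of a common $4$-cycle'' and show that its classes coincide with the preimages of the bit edge subsets under \emph{any} isomorphism: one inclusion is Lemma~\ref{opposite}, and the other is your explicit bit-flipping chain inside $Q_k$, which shows each $E_i$ is a single $\approx$-class. The paper instead fixes a Hamilton cycle of $H$ and an edge $f_1$ at its starting vertex and runs Algorithm~\ref{alg:compute-edge-subset}: walking along the Hamilton cycle, it repeatedly replaces the current edge by the edge at the next vertex lying in the unique $4$-cycle through the current edge and the current Hamilton-cycle edge (well defined by Lemma~\ref{4-cycle}), collecting a set $F$; Lemma~\ref{opposite} gives that the image of $F$ lies in a single bit edge subset, and the clinching step is a counting argument --- $F$ is a perfect matching of size $2^{k-1}=|E_i|$, so its image is all of $E_i$. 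Your approach buys a cleaner, coordinate-level argument that avoids the perfect-matching count and needs only the existence (not the uniqueness) of the relevant $4$-cycles; the paper's buys an explicit procedure for computing the bit edge subsets of $H$, which is what the reconstruction algorithm later relies on. One small slip in your write-up: the $4$-cycle $a,\,a^{i},\,(a^{i})^{j},\,a^{j}$ passes through one $i$-flipping and one $j$-flipping edge at $a$, not ``two $j$-flipping edges''; since uniqueness of that $4$-cycle plays no role in your chain, this is cosmetic and does not affect correctness.
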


\begin{proof}
Suppose that $H$ is isomorphic to $Q_k$ for some non-negative integer $k$. If $k\in \{0, 1\}$, then the proposition trivially holds, so assume that $k\ge 2$. Let $v_1$ be a vertex of $H$. Then, under an isomorphism between $H$ and $Q_k$, each of the $k$ edges incident with $v_1$ are assigned to distinct bit edge subsets of $Q_k$. Let $f_1$ be an edge incident with $v_1$ in $H$. We next show that, regardless of the choice of isomorphism between $H$ and $Q_k$, the bit edge subset of $Q_k$ containing the image of $f_1$ is always the same subset of edges of $Q_k$.

Since $H$ is isomorphic to $Q_k$, it follows that $H$ has a \Blue{Hamilton} cycle
$$v_1,\, e_1,\, v_2,\, e_2,\, v_3,\, \ldots,\, v_{2^k},\, e_{2^k},\, v_1$$
starting at $v_1$. Consider \Green{Algorithm~\ref{alg:compute-edge-subset}} which traverses this \Blue{Hamilton} cycle\Green{.} We next show that the image of $F$ is a bit edge subset of $Q_k$. By Lemma~\ref{4-cycle}, \Blue{Line}~\ref{repeat} is well defined, that is, there is a unique edge of $H$ incident with $v_{i+1}$ that is in the unique $4$-cycle containing $f_i$ and $e_i$. Thus, by Lemma~\ref{opposite}, regardless of the choice of isomorphism between $H$ and $Q_k$, the bit edge subset of $Q_k$ that contains the image of $f_1$ also contains the image of $f_2$ and, more generally, the bit edge subset of $Q_k$ that contains the image of $f_i$ also contains the image of $f_{i+1}$. Hence, the bit edge subset of $Q_k$ that contains the image of $f_1$, contains the images of each of the edges in $F$.

\begin{algorithm}
  \caption{\sc Compute Bit Edge Subset}
  \label{alg:compute-edge-subset}

\smallskip

  \KwIn{A graph $H$ that is isomorphic to $Q_k$ for some integer $k\geq 2$, a Hamilton cycle $v_1,\, e_1,\, v_2,\, e_2,\, v_3,\, \ldots,\, v_{2^k},\, e_{2^k},\, v_1$ of $H$ and an edge $f_1$ incident with $v_1$ in $H$.}
  \KwOut{A subset $F$ of the edges of $H$.}

\smallskip

$F \leftarrow \{f_1\}$\;
\For{$i \leftarrow 1$ \KwTo $2^k - 1$}{
  \uIf{$f_i\neq e_i$}{
    Set $f_{i+1}$ to be the edge of $H$ incident with $v_{i+1}$ that is in the unique $4$-cycle containing $f_i$ and $e_i$ \; \label{repeat}
  }
  \Else{
    $f_{i+1} \leftarrow f_i$
  }
  $F \leftarrow F \cup \{f_{i+1}\}$\;
}
\Return{$F$}
\end{algorithm}

Now, by construction, it is easily checked that every vertex of $H$ is incident with exactly one edge in $F$, that is, $F$ is a perfect matching of $H$, and so $|F|=2^{k-1}$. Since a bit edge subset of $Q_k$ has size $2^{k-1}$, it follows that the image of $F$ is a bit edge subset of $Q_k$. Repeating this process for each of the remaining edges incident with $v_1$ establishes the proposition.
\end{proof}

Let $G$ be the rSPR graph of a collection $\cP$ of phylogenetic $X$-trees such that $G\cong Q_k$. Furthermore, let $e=\{\cT,\cT'\}$ be an edge in the $i$-th bit edge subset $E_i$ of $G$ for some $i\in\{1,2,\ldots,k\}$. If $(X',Y')$ is an ordered pair such that $X'\in M(\cT,\cT')$ and $Y'$ is the minimal cluster in $C(\cT)\cap C(\cT')$ that properly contains $X'$, we say that $(X',Y')$ is an {\it ordered pair for} $e$.  Moreover, $(X',Y')$ is said to {\it verify} $E_i$ if $(X',Y')$ is an ordered pair for each edge in $E_i$. Now suppose that $$O=((X_1,Y_1),(X_2,Y_2),\ldots,(X_k,Y_k))$$ is a sequence of distinct ordered pairs such that each $(X_i,Y_i)$ with $i\in\{1,2,\ldots,k\}$ verifies $E_i$. If, for each pair $i,j\in\{1,2,\ldots,k\}$ and $i\ne j$, the two ordered pairs $(X_i,Y_i)$ and $(X_j,Y_j)$ satisfy one of (I)--(III) in the definition of the nested subtree property, then we say that $O$  {\it verifies} the nested subtree property of $G$.

The next two lemmas establish properties of ordered pairs for edges of an rSPR graph. These properties are then used in Proposition~\ref{p:order-does-not-matter} to show that, provided an rSPR graph $G$ has the nested subtree property, any ordered pair that labels an edge $\{\cT,\cT'\}$ with $d_{\rm rNNI}(\cT, \cT')=1$ and verifies its associated bit edge subset can also be used to verify the nested subtree property of $G$.

\begin{lemma}\label{l:help1}
Let $\{\cT,\cT'\}$ be an edge of an rSPR graph $G$. If $(X_1,Y_1)$, $(X_2,Y_2)$, and $(X_3,Y_3)$  are ordered pairs for $e$, then $Y_1=Y_2=Y_3$, $X_1\cup X_2\cup X_3=Y_1$, and the three sets $X_1$, $X_2$, and $X_3$ are pairwise disjoint. 
\end{lemma}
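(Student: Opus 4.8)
The plan is to reduce everything to the local structure of the three moving subtrees that arise precisely when $\cT$ and $\cT'$ are one rNNI apart. First I would observe that, by definition, the $Y$-component of an ordered pair for $e$ is uniquely determined by its $X$-component (it is the minimal cluster in $C(\cT)\cap C(\cT')$ properly containing $X'$). Hence the three pairs, being distinct, have three distinct moving subtrees $X_1, X_2, X_3\in M(\cT, \cT')$, so $|M(\cT, \cT')|\ge 3$. Since $e$ is an edge of $G$, we have $d_\rSPR(\cT, \cT')=1$, and Proposition~\ref{p:one-or-three} gives $|M(\cT, \cT')|\in\{1,3\}$. Therefore $|M(\cT, \cT')|=3$, the three moving subtrees are exactly $X_1, X_2, X_3$, and $d_{\rm rNNI}(\cT, \cT')=1$.

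Next I would exploit the structure forced by $d_{\rm rNNI}(\cT, \cT')=1$. As established in the proof of Proposition~\ref{p:one-or-three}, there are pairwise-disjoint subsets $A$, $B$, $C$ of $X$ (the leaf sets of the three pendant subtrees meeting at the internal edge involved in the interchange) with $\{X_1, X_2, X_3\}=\{A, B, C\}$, and the vertex $u$ of $\cT$ (resp.\ $u'$ of $\cT'$) realising the minimal common cluster satisfies $C_\cT(u)=C_{\cT'}(u')=A\cup B\cup C$. This at once yields that $X_1, X_2, X_3$ are pairwise disjoint and that $X_1\cup X_2\cup X_3=A\cup B\cup C$, giving two of the three conclusions.

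It remains to prove $Y_1=Y_2=Y_3=A\cup B\cup C$. Here I would use that a single rNNI modifies $\cT$ only within this local region: relabelling $A$, $B$, $C$ if necessary so that $\cT$ and $\cT'$ restrict locally to $((A,B),C)$ and $((A,C),B)$, the two trees agree at and above $u$, so every cluster properly containing $A\cup B\cup C$ lies in both $C(\cT)$ and $C(\cT')$, whereas $A\cup B\in C(\cT)\setminus C(\cT')$ and $A\cup C\in C(\cT')\setminus C(\cT)$. Examining the clusters that properly contain each of $A$, $B$, $C$ and are contained in $A\cup B\cup C$ then shows that $A\cup B\cup C$ is the minimal common cluster properly containing each of them, so $Y_1=Y_2=Y_3=A\cup B\cup C=X_1\cup X_2\cup X_3$, completing the proof.

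The main obstacle I anticipate is this last step: one must argue carefully that no cluster of $C(\cT)\cap C(\cT')$ sits strictly between a moving subtree and $A\cup B\cup C$. This rests on the fact that an rNNI is a purely local modification, so that the only clusters within the region on which $\cT$ and $\cT'$ disagree are $A\cup B$ and $A\cup C$; making this locality precise (including the bookkeeping around the root $\rho$ and the boundary case $A\cup B\cup C=X$) is where the care is needed, whereas the identification of the three moving subtrees and the equality $C_\cT(u)=C_{\cT'}(u')=A\cup B\cup C$ is handed to us directly by Proposition~\ref{p:one-or-three}.
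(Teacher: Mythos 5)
Your proposal is correct and follows essentially the same route as the paper: invoke Proposition~\ref{p:one-or-three} to conclude that the three distinct ordered pairs force $M(\cT,\cT')=\{X_1,X_2,X_3\}$ and $d_{\rm rNNI}(\cT,\cT')=1$, and then read off the three claims from the local structure of an rNNI move. The paper compresses the second step into the single sentence ``the lemma now follows from the definition of an rNNI move,'' whereas you spell out the verification that $A\cup B\cup C$ is the minimal common cluster above each of the three blocks; this is exactly the intended argument.
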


\begin{proof}
By Proposition~\ref{p:one-or-three}, we have $M(\cT,\cT')=\{X_1,X_2,X_3\}$ and $d_{\rm rNNI}(\cT, \cT')=1$. The lemma now follows from the definition of an rNNI move.
\end{proof}

\begin{lemma} \label{l:help2} 
Let $G$ be the rSPR graph for a collection $\cP$ of phylogenetic $X$-trees such that $G\cong Q_k$ for some non-negative integer $k$. Let $(X',Y')$ be an ordered pair that verifies $E_i$ for some $i\in\{1,2,\ldots,k\}$. Then $X'$ and $Y'$ are clusters of each element in $\cP$. 
\end{lemma}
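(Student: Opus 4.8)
The plan is to leverage two facts: that the bit edge subset $E_i$ is a perfect matching of $G$, and that membership of $X'$ in a moving-subtree set forces $X'$, and then $Y'$, to be a cluster of each endpoint of the corresponding edge.

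First I would record that $E_i$ is a perfect matching of $G$. Under any isomorphism $G\cong Q_k$, the edges of $E_i$ are exactly those whose endpoints differ in the $i$-th bit, and each $k$-bit string has a unique neighbour differing only in that bit; hence each vertex of $G$ is incident with exactly one edge of $E_i$. (This is also implicit in the proof of Proposition~\ref{unique-subsets}, where a bit edge subset is shown to be a perfect matching.) Consequently, every tree $\cS\in\cP$ is an endpoint of exactly one edge of $E_i$.

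Next, fix an arbitrary $\cS\in\cP$ and let $\{\cS,\cS'\}$ be the unique edge of $E_i$ incident with $\cS$. Since $(X',Y')$ verifies $E_i$, it is an ordered pair for $\{\cS,\cS'\}$; that is, $X'\in M(\cS,\cS')$ and $Y'$ is the minimal cluster in $C(\cS)\cap C(\cS')$ properly containing $X'$. Unpacking the first condition, $\{(X\cup\{\rho\})-X',X'\}$ is a maximum agreement forest for $\cS$ and $\cS'$. By property~(ii) in the definition of an agreement forest, the subtrees $\cS(X')$ and $\cS((X\cup\{\rho\})-X')$ are vertex-disjoint, so $\cS(X')$ is pendant in $\cS$. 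Then no leaf outside $X'$ can lie below the root $r$ of $\cS(X')$: otherwise $r$ would sit on the directed path from $\rho$ to that leaf, placing $r$ in $\cS((X\cup\{\rho\})-X')$ and contradicting vertex-disjointness. Hence $C_{\cS}(r)=X'$ exactly, so $X'$ is a cluster of $\cS$, and symmetrically of $\cS'$. Moreover $Y'\in C(\cS)$ by its defining property, so $Y'$ is also a cluster of $\cS$.

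Since $\cS$ was arbitrary and every element of $\cP$ arises as an endpoint of some edge of the perfect matching $E_i$, it follows that $X'$ and $Y'$ are clusters of each element of $\cP$, as required. The only step requiring care is the deduction that a moving subtree is a cluster of both trees it is defined for; this rests squarely on the vertex-disjointness condition~(ii) of an agreement forest, which prevents any leaf outside $X'$ from appearing below the root of the pendant subtree $\cS(X')$.
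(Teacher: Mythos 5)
Your proposal is correct and follows the same route as the paper: the paper's proof simply notes that $E_i$ is a perfect matching of $G$ (since $G\cong Q_k$), so every tree in $\cP$ is incident with an edge of $E_i$, whence $X'$ and $Y'$ lie in its cluster set. You merely fill in the details the paper leaves implicit, namely why a bit edge subset is a perfect matching and why the vertex-disjointness condition of an agreement forest forces a moving subtree to be a cluster of both endpoints.
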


\begin{proof}
Let $\cT$ be a vertex of $G$. Since $G\cong Q_k$, so $E_i$ is a perfect matching of $G$, it follows that $\cT$ is incident with an edge $e\in E_i$. Hence, $X'$ and $Y'$ are both elements in $C(\cT)$.
\end{proof}

\begin{proposition}\label{p:order-does-not-matter}
Let $G$ be the rSPR graph for a collection $\cP$ of phylogenetic $X$-trees such that $G\cong Q_k$ for some non-negative integer $k$. Let $e=\{\cT,\cT'\}$ be an edge of $E_i$ with $i\in\{1,2,\ldots,k\}$ such that $(X_i^1,Y_i^1)$, $(X_i^2,Y_i^2)$, and $(X_i^3,Y_i^3)$ are ordered pairs for $e$.  Suppose that the sequence of ordered pairs $$((X_1,Y_1),(X_2,Y_2),\ldots,(X_k,Y_k))$$ verifies the nested subtree property of $G$ and that $(X_i,Y_i)=(X_i^1,Y_i^1)$. If $(X_i^\ell,Y_i^\ell)$ with $\ell\in\{2,3\}$ verifies $E_i$, then the sequence $$((X_1,Y_1),(X_2,Y_2),\ldots,(X_{i-1},Y_{i-1}),(X^\ell_i,Y^\ell_i),(X_{i+1},Y_{i+1}),\ldots,(X_k,Y_k))$$ also verifies the nested subtree property of $G$. 
\end{proposition}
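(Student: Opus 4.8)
The plan is to verify directly that, after replacing the $i$-th entry $(X_i,Y_i)=(X_i^1,Y_i^1)$ by $(X_i^\ell,Y_i^\ell)$, every pair in the new sequence still satisfies one of (I)--(III). Since only the $i$-th slot changes, the pairs $(X_p,Y_p),(X_q,Y_q)$ with $p,q\neq i$ are untouched and already satisfy one of (I)--(III) by hypothesis; each unchanged slot still verifies its bit edge subset, and $(X_i^\ell,Y_i^\ell)$ verifies $E_i$ by assumption. So it suffices to fix an arbitrary $j\neq i$ and show that $(X_i^\ell,Y_i^\ell)$ and $(X_j,Y_j)$ satisfy one of (I)--(III). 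My first move is to invoke Lemma~\ref{l:help1} (and hence Proposition~\ref{p:one-or-three}): because $(X_i^1,Y_i^1)$, $(X_i^2,Y_i^2)$, $(X_i^3,Y_i^3)$ are three ordered pairs for $e=\{\cT,\cT'\}$, we get $Y_i^1=Y_i^2=Y_i^3$, the blocks $X_i^1,X_i^2,X_i^3$ are pairwise disjoint with union $Y_i^1$, and $d_{\rm rNNI}(\cT,\cT')=1$. Writing $Y_i$ for this common value (consistent with $(X_i,Y_i)=(X_i^1,Y_i^1)$), we have $Y_i^\ell=Y_i$, so the swap changes only the first coordinate.

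Next I would split into cases according to which of the symmetric list of relations holds between $(X_i^1,Y_i)$ and $(X_j,Y_j)$ in the original, verified sequence. Three of these refer only to $Y_i$ and never to the moving subtree $X_i^1$: namely $Y_i\cap Y_j=\emptyset$, $Y_i\subseteq X_j$, and $Y_i\subset Y_j$ together with $X_j\cap Y_i=\emptyset$. Because $Y_i^\ell=Y_i$, each transfers verbatim to $(X_i^\ell,Y_i)$ and $(X_j,Y_j)$, so these cases are immediate. The remaining two relations do involve $X_i^1$: either $Y_j\subseteq X_i^1$, or $Y_j\subset Y_i$ with $X_i^1\cap Y_j=\emptyset$. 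In both, $Y_j$ is a proper subset of $Y_i$, and I will handle them with one structural observation.

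The key step, call it $(\star)$, is: every set that is a cluster of both $\cT$ and $\cT'$ and a proper subset of $Y_i$ is contained in a single block $X_i^m$. Granting $(\star)$, note that $Y_j$ is a cluster of both $\cT$ and $\cT'$ by Lemma~\ref{l:help2} (as $(X_j,Y_j)$ verifies $E_j$ and $\cT,\cT'\in\cP$), and $Y_j\subset Y_i$ in both remaining cases; hence $Y_j\subseteq X_i^m$ for some $m$. If $Y_j\subseteq X_i^1$, then disjointness of the blocks gives $X_i^\ell\cap Y_j=\emptyset$, while $Y_j\subseteq X_i^1\subsetneq Y_i=Y_i^\ell$ gives $Y_j\subset Y_i^\ell$, so $(X_j,Y_j)$ and $(X_i^\ell,Y_i^\ell)$ satisfy~(III). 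If instead $Y_j\cap X_i^1=\emptyset$, then $m\neq1$ since $Y_j\neq\emptyset$, so $Y_j\subseteq X_i^2$ or $Y_j\subseteq X_i^3$: when $Y_j\subseteq X_i^\ell$ the two pairs satisfy~(II), and when $Y_j$ lies in the remaining non-$X_i^1$ block we get $X_i^\ell\cap Y_j=\emptyset$ and $Y_j\subset Y_i^\ell$, so they satisfy~(III). This exhausts all cases modulo $(\star)$.

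The hard part will be establishing $(\star)$, and this is where I expect the real work to lie. Since $d_{\rm rNNI}(\cT,\cT')=1$ and $X_i^1,X_i^2,X_i^3$ partition $Y_i$, the restriction of each tree to $Y_i$ is a fork of the three blocks in which exactly one block is the ``outgroup'' and the other two form the ingroup pair, and each block has the same internal structure in $\cT$ and $\cT'$. Thus the clusters of $\cT$ that are proper subsets of $Y_i$ are the within-block clusters together with the single set $Y_i\setminus O_\cT$, where $O_\cT$ is the outgroup block of $\cT$; and similarly for $\cT'$ with outgroup $O_{\cT'}$. The within-block clusters are common to both trees, so the only way a non-within-block cluster could be common is if $Y_i\setminus O_\cT=Y_i\setminus O_{\cT'}$, that is $O_\cT=O_{\cT'}$. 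The delicate point I must nail down is precisely that $O_\cT\neq O_{\cT'}$: since $\cT\ncong\cT'$ and the two trees differ only by the single rNNI operation localized within $Y_i$, their $Y_i$-forks must differ (otherwise $\cT|Y_i\cong\cT'|Y_i$ and, as $Y_i$ is a common cluster attached identically in both, $\cT\cong\cT'$, contradicting $d_{\rm rSPR}(\cT,\cT')=1$). Consequently $Y_i\setminus O_\cT$ is not a cluster of $\cT'$, every common proper subcluster of $Y_i$ is within a single block, and $(\star)$ follows, completing the proof.
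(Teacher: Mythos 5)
Your proof is correct and follows essentially the same route as the paper's: the reduction via Lemma~\ref{l:help1} and the verbatim transfer of the cases that involve only $Y_i$ are identical, and your claim $(\star)$ is a unified packaging of the paper's key step in the remaining cases, namely that $d_{\rm rNNI}(\cT,\cT')=1$ forces any cluster common to $\cT$ and $\cT'$ and properly contained in $Y_i$ (such as $Y_j$, by Lemma~\ref{l:help2}) to lie inside a single block $X_i^m$. The only difference is organisational: the paper reaches this conclusion by a local contradiction argument (observing that $X_i^2\cup X_i^3$ is a cluster of at most one of the two trees), whereas you isolate the fork structure of $\cT|Y_i$ and $\cT'|Y_i$ as a standalone observation, which yields the same case analysis.
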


\begin{proof}
By Proposition~\ref{p:one-or-three}, we first observe that $M(\cT,\cT')=\{X_i^1,X_i^2,X_i^3\}$ and $d_{\rm rNNI}(\cT, \cT')=1$. Now, suppose that $(X_i^\ell,Y_i^\ell)$ with $\ell\in\{2,3\}$ verifies $E_i$. Without loss of generality, we may assume that $\ell=2$. Let $f$ be an edge of $G$ such that $f\in E_j$ with $i\ne j$. We consider three cases depending on which of the three properties in the definition of the nested subtree property the two ordered pairs $(X_i,Y_i)$ and $(X_j,Y_j)$ satisfy. To this end, recall Lemma~\ref{l:help1}, which we will freely use throughout all three cases.

First, suppose that $(X_i,Y_i)$ and $(X_j,Y_j)$ satisfy (I). As $Y_i\cap Y_j=\emptyset$ it follows that $Y_i^2\cap Y_j=\emptyset$. Thus $(X_i^2,Y_i^2)$ and $(X_j,Y_j)$ also satisfy (I).

Second, suppose that $(X_i,Y_i)$ and $(X_j,Y_j)$ satisfy (II). Clearly, if $Y_i\subseteq X_j$, then it immediately follows that $Y_i^2\subseteq X_j$. Hence $(X_i^2,Y_i^2)$ and $(X_j,Y_j)$ also satisfy (II). We may therefore assume that $Y_j\subseteq X_i$. As $Y_j\subseteq X_i\subset Y_i$, we have $Y_j\subset Y_i^2$. Moreover, because $Y_j\subseteq X_i$ and $X_i\cap X_i^2=\emptyset$, it follows that $X_i^2\cap Y_j=\emptyset$. Thus $(X_i^2,Y_i^2)$ and $(X_j,Y_j)$ satisfy (III).

Third, suppose that $(X_i,Y_i)$ and $(X_j,Y_j)$ satisfy (III). Similar to the previous case, if $Y_i\subset Y_j$ and $X_j\cap Y_i=\emptyset$, then $Y_i^2\subset Y_j$ and $X_j\cap Y_i^2=\emptyset$ and, so, $(X_i^2,Y_i^2)$ and $(X_j,Y_j)$ also satisfy (III). Therefore, assume that $Y_j\subset Y_i$ and $X_i\cap Y_j=\emptyset$. If $X_i^2\cap Y_j=\emptyset$, then, as $Y_j\subset Y^2_i$, it follows that $(X_i^2,Y_i^2)$ and $(X_j,Y_j)$ again satisfy (III). On the other hand, if $X_i^2\cap Y_j\ne\emptyset$, we consider $X_i^3$ to complete the argument. Assume that $X_i^3\cap Y_j\ne\emptyset$. Then, as $X^1_i\cup X^2_i\cup X^3_i=Y_i$, we have $Y_j\subseteq X_i^2\cup X_i^3$. As  $d_{\rm rNNI}(\cT, \cT')=1$, each element in $M(\cT,\cT')=\{X_i^1,X_i^2,X_i^3\}$ is a cluster of $\cT$ and $\cT'$, and $X_i^2\cup X_i^3$ is a cluster of at most one of $\cT$ and $\cT'$. But by Lemma~\ref{l:help2}, $Y_j$ is a cluster of $\cT$ and $\cT'$; a contradiction.  Hence $X_i^3\cap Y_j=\emptyset$, and so $Y_j\subset Y_i$. Thus $(X^2_i, Y^2_i)$ and $(X_j, Y_j)$ satisfy (II).

For all three cases, it now follows that $$((X_1,Y_1),(X_2,Y_2),\ldots,(X_{i-1},Y_{i-1}),(X^2_i,Y^2_i),(X_{i+1},Y_{i+1}),\ldots,(X_k,Y_k))$$ verifies the nested subtree property for $G$, thereby establishing the proposition.
\end{proof}

We are now in a position to present the algorithm  {\sc Construct Level-$1$ Network} that constructs a level-$1$ network whose display set is a given set of phylogenetic trees if such a network exists.

\begin{algorithm}[H]
  \caption{{\sc Construct Level-$1$ Network} (Part 1)}
  \label{alg:Construct-Network}
  \KwIn{A collection $\cP$ of phylogenetic $X$-trees.}
  \KwOut{A level-$1$ network $\cN$ on $X$ with $T(\cN)=\cP$ if such a network exists or, otherwise, a statement saying that no such network exists.}

$k \leftarrow \log_2|\cP|$\;\label{alg:start}  

\If{$k$ is not a non-negative integer}{ 
  \Return{``There is no level-$1$ network on $X$ whose display set is $\cP$.''}\; 
}

\If{$k=0$}{
  \Return{the unique element in $\cP$.}
}

Construct the rSPR graph $G$ of $\cP$ and, for each pair $\cT,\cT'\in\cP$ with $d_\rSPR(\cT,\cT')=1$, compute $M(\cT,\cT')$.\;

\If{$G$ is not isomorphic to $Q_k$}{ 
  \Return{``There is no level-$1$ network on $X$ whose display set is $\cP$.''}\; 
}

\For{$i \leftarrow 1$ \KwTo $k$}{
  \uIf{there exists an ordered pair $(X_i,Y_i)$ that verifies $E_i$ and, for each $j\in\{1,2,\ldots,i-1\}$, the pairs $(X_i,Y_i)$ and $(X_j,Y_j)$ satisfy one of (I)--(III) in the definition of the nested subtree property}{
    set $(X_i,Y_i)$ to be such an ordered pair 
  }
  \Else{
      \Return{``There is no level-$1$ network on $X$ whose display set is $\cP$.''}\;\label{alg:correct-decision} 
  }
}

Set $((X_1^0,Y_1^0),\ldots,(X_k^0,Y_k^0))$ to be a permutation of $((X_1,Y_1),\ldots,(X_k,Y_k))$ such that, for each pair $i,j\in\{1,2,\ldots,k\}$ with $i<j$ either $Y_i^0\cap Y_j^0=\emptyset$ or $Y_i^0\subset Y_j^0$.\;\label{alg:start_construction}

$\cP_0 \leftarrow \cP$ and $X_0 \leftarrow X$\;

\For{$i \leftarrow 1$ \KwTo $k$}{
  $X_i \leftarrow (X_{i-1}-Y_i^{i-1})\cup\{y_i\}$\;
  
  Obtain $\cP_i$ from $\cP_{i-1}$ by replacing each $\cT\in\cP_{i-1}$ with the phylogenetic tree on $X_i$ resulting from a subtree reduction on $Y_i^{i-1}$ with replacement leaf $y_i$.\;
  \For{$j \leftarrow 1$ \KwTo $k$}{
    \uIf{$Y_i^{i-1}\subseteq X_j^{i-1}$}{
      $(X_j^i,Y_j^i) \leftarrow ((X_j^{i-1}-Y_i^{i-1})\cup\{y_i\},(Y_j^{i-1}-Y_i^{i-1})\cup\{y_i\})$\;
    }
    \uElseIf{$Y_i^{i-1}\subset Y_j^{i-1}$ and $X_j^{i-1}\cap Y_i^{i-1}=\emptyset$}{
      $(X_j^i,Y_j^i) \leftarrow (X_j^{i-1},(Y_j^{i-1}-Y_i^{i-1})\cup\{y_i\})$\; 
    }
    \Else{
      $(X_j^i,Y_j^i) \leftarrow (X_j^{i-1},Y_j^{i-1})$\;
    }
  }
}

\end{algorithm}

\setcounter{algocf}{1} 
\begin{algorithm}[H]
  \caption{{\sc Construct Level-$1$ Network} (Part 2)}
  \setcounter{AlgoLine}{25} 

$\cT \leftarrow \text{the unique phylogenetic $X_k$-tree in $\cP_k$}$\;

$\cN_k \leftarrow \cT$\;

$i \leftarrow k$\;
 
\Repeat{$i < 1$}{
  $\cT \leftarrow $ an element in $\cP_{i-1}$ \;
  Obtain $\cN_i'$ from $\cN_i$ by replacing the leaf labelled $y_i$ with $\cT|Y_i^{i-1}$.\;
  
  Set $\cS$ to be an element in $\cP_{i-1}$ such that $\cT|Y_i^{i-1}\ncong \cS|Y_i^{i-1}$.\;
  
  Set $u$ to be the vertex of $\cT$ such that $X_i^{i-1}\subset C_\cT(u)$ and no child $w$ of $u$ in $\cT$ satisfies $X_i^{i-1}\subset C_\cT(w)$.\;
  
   Set $u'$ to be the vertex of $\cS$ such that $X_i^{i-1}\subset C_\cS(u')$ and no child $w$ of $u'$ in $\cS$ satisfies $X_i^{i-1}\subset C_\cS(w)$.\;
   
    \uIf{$(C_\cT(u)-X_i^{i-1})\cap (C_\cS(u')-X_i^{i-1})=\emptyset$ or $C_\cS(u')\subseteq C_\cT(u)$}{
    set $\cN_{i-1}$ to be the network obtained from $\cN_i'$ by subdividing the arc directed into the (unique) vertex whose cluster is $C_{\cS}(u')-X_i^{i-1}$ with a new vertex $v$, subdividing the arc directed into the (unique) vertex whose cluster is $X_i^{i-1}$ with a new vertex $v'$, and adding the new arc $(v,v')$\;
    }
    \ElseIf{$C_\cT(u)\subseteq C_\cS(u')$}{
      set $\cN_{i-1}$ to be the network obtained from $\cN_i'$ by subdividing the arc directed into the (unique) vertex whose cluster is $C_{\cS}(u')$ with a new vertex $v$, subdividing the arc directed into the (unique) vertex whose cluster is $X_i^{i-1}$ with a new vertex $v'$, and adding the new arc $(v,v')$\;
    }
  
  $i \leftarrow i-1$
}

$\cN \leftarrow \cN_0$\; 
\Return{$\cN$}\;\label{alg:end}
\end{algorithm}

\begin{theorem}\label{t:run-time}
Let $\cP$ be a set of phylogenetic $X$-trees. Then {\sc Construct Level-$1$ Network} correctly decides if $\cP$ is the display set of a level-$1$ network on $X$ and, if so, reconstructs such a network. Moreover the running time of the algorithm is $O(|\cP|^2|X|^2)$.
\end{theorem}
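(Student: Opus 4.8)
The plan is to establish the theorem in two parts: correctness and then the $O(|\cP|^2|X|^2)$ running time. For correctness I would reduce everything to the characterisation in Theorem~\ref{t:main}, namely that $\cP$ is the display set of a level-$1$ network if and only if $G\cong Q_k$ for $k=\log_2|\cP|$ and $G$ has the nested subtree property. The two early rejection branches are then immediate: if $k=\log_2|\cP|$ is not a non-negative integer, the note following Theorem~\ref{t:main} shows no level-$1$ network exists; and if $G\not\cong Q_k$, Theorem~\ref{t:main} again gives the correct ``no''. The $k=0$ base case is trivial. So the real work is to show (a) that the first for-loop returns ``no'' exactly when $G$ lacks the nested subtree property, and (b) that the construction phase, when reached, outputs a level-$1$ network $\cN$ with $T(\cN)=\cP$.

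The heart of the argument, and the step I expect to be the main obstacle, is the correctness of the greedy commitment in the first for-loop. I would prove, by induction on $i$, the invariant that after iteration $i$ there is a sequence of ordered pairs verifying the nested subtree property of $G$ whose first $i$ entries coincide with the chosen $(X_1,Y_1),\ldots,(X_i,Y_i)$. The inductive step is an exchange argument. Given a verifying sequence $V$ agreeing with the committed prefix through position $i-1$, its $i$-th entry $V_i$ verifies $E_i$ and is compatible (via (I)--(III)) with that prefix, so the greedy search at Line~\ref{alg:correct-decision} is nonempty. If the pair $(X_i,Y_i)$ selected by the algorithm differs from $V_i$, then some edge of $E_i$ admits two distinct ordered pairs; by Proposition~\ref{p:one-or-three} this edge has rNNI distance one and three moving subtrees sharing a common second coordinate (Lemma~\ref{l:help1}), and Proposition~\ref{p:order-does-not-matter} then lets me swap $V_i$ for $(X_i,Y_i)$ while preserving the verifying property. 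Hence the invariant survives, the loop succeeds precisely when $G$ has the nested subtree property, and when it reaches the rejection at Line~\ref{alg:correct-decision} no verifying sequence can exist, so by Theorem~\ref{t:main} the ``no'' is correct. This is the delicate point because it is exactly here that the nonuniqueness of moving subtrees, and hence of edge labels, must be shown not to affect the decision.

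For the constructive branch I would first note that the permutation in Line~\ref{alg:start_construction} exists because $\{Y_1,\ldots,Y_k\}$ is a laminar family: in each of (I)--(III) we obtain either $Y_i\cap Y_j=\emptyset$ or $Y_i\subset Y_j$, so the $Y_i$ admit a linear order in which subsets precede supersets. I would then verify that the forward and backward loops jointly implement the inductive construction in the proof of Lemma~\ref{main2}. Processing innermost clusters first, each subtree reduction on $Y_i^{i-1}$ is well defined because $Y_i$ is a cluster of every tree in $\cP$ by Lemma~\ref{l:help2}; it collapses the $E_i$-matched pairs, halves the collection, and updates the ordered pairs by exactly the rules prescribed in Lemma~\ref{main2}, so that $\cP_k$ is a single tree. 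The backward loop reverses these reductions, re-expanding $y_i$ to $\cT|Y_i^{i-1}$ and attaching the reticulation arc using precisely cases (i) and (ii) of Lemma~\ref{main2}; by that lemma the output $\cN=\cN_0$ is therefore a level-$1$ network with $T(\cN)=\cP$. This part is essentially bookkeeping once the correspondence with Lemma~\ref{main2} is pinned down.

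Finally, for the running time I would bound each phase and show that constructing $G$ dominates. For each of the $O(|\cP|^2)$ pairs $\cT,\cT'$, deciding $d_\rSPR(\cT,\cT')=1$ and extracting $M(\cT,\cT')$ costs $O(|X|^2)$: one tests the $2|X|-1$ size-two forests (as noted after Corollary~\ref{c:no-name}), each checkable in $O(|X|)$ time. This gives the $O(|\cP|^2|X|^2)$ term. The isomorphism test $G\cong Q_k$, the computation of the bit edge subsets (each via Algorithm~\ref{alg:compute-edge-subset} in $O(|\cP|)$ along a Hamilton cycle of $G$, with well-definedness guaranteed by Lemma~\ref{4-cycle} and Proposition~\ref{unique-subsets}), the greedy loop (at most three candidate ordered pairs per $E_i$ by Proposition~\ref{p:one-or-three}, each tested against the precomputed $M$-sets), the laminar sort, and both construction loops are each polynomial and subsumed by the $O(|\cP|^2|X|^2)$ term, since $k=\log_2|\cP|$ and every tree and intermediate network has $O(|X|)$ vertices. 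Summing yields the claimed $O(|\cP|^2|X|^2)$ bound.
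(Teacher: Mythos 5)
Your proposal is correct and follows essentially the same route as the paper: correctness is reduced to Theorem~\ref{t:main}, with Proposition~\ref{p:order-does-not-matter} justifying the greedy commitment to ordered pairs (a point you make more explicit than the paper, which leaves the exchange argument largely implicit) and Lemma~\ref{main2} underpinning the reduction/re-expansion phases, while the running time is dominated by the $O(|\cP|^2|X|^2)$ cost of building the rSPR graph together with the moving subtrees. The paper's version differs only in being more detailed on implementation (interval-based cluster representations and linear-time tree isomorphism) to pin down the per-line costs, but the substance is the same.
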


\begin{proof}
Let $G$ be the rSPR graph of $\cP$. Suppose that $G\cong Q_k$  for some non-negative integer $k$. Let $\{E_1,E_2,\ldots, E_k\}$ be the partition of the edge set of $G$ such that each $E_i$ with $i\in\{1,2,\ldots,k\}$ corresponds to the $i$-th bit edge subset of $Q_k$. By Proposition~\ref{unique-subsets}, this partition is well defined. Hence, it follows from Theorem~\ref{t:main} that  {\sc Construct Level-$1$ Network} correctly decides whether or not $\cP$ is the display set of a level-$1$ network on $X$. That is, the algorithm completes \Blue{Lines \ref{alg:start}--\ref{alg:correct-decision}} without returning ``There is no level-$1$ network on $X$ whose display set is $\cP$'' if and only if $\cP$ is the display set of a level-$1$ network on $X$. Now suppose that the algorithm completes \Blue{Lines \ref{alg:start}--\ref{alg:correct-decision}} without returning ``There is no level-$1$ network on $X$ whose display set is $\cP$''. \Green{It then follows from the construction given in the inductive proof of Lemma~\ref{main2} that} \Blue{Lines \ref{alg:start_construction}--\ref{alg:end}} of {\sc Construct Level-$1$ Network} correctly reconstruct a level-$1$ network whose display set is $\cP$. 

\Green{
In preparation for the running time analysis, we discuss implementation details and suitable data structures next. For all directed and undirected graphs, \Blue{we use adjacency lists to store arcs (resp.\ edges), and red-black trees to perform binary set operations and comparisons in time at most $O(|X| \log |X|)$. For details about these data structures, we refer the interested reader to~\cite{cormen09}}. Several steps in {\sc Construct Level-$1$ Network} involve finding clusters both in phylogenetic trees and phylogenetic networks. Let $\cN$ be a level-1 network on $X$. Asano et al.~\cite[Theorem 4]{asano12} show how to compute a certain cluster representation of $\cN$ in time $O(|X|)$. Roughly speaking, the authors describe a clever way to number the leaves of $\cN$ such that each cluster can be described by a discrete interval. A similar approach was previously taken by Day~\cite{day85} to obtain efficient representations for clusters in phylogenetic trees. Using this interval-based representation, we can perform each of the following operations in time at most $O(|X|)$, which we will freely use throughout the remainder of the proof.
\begin{enumerate}[(i)]
\item Find a cluster $C_\cN(u)$ of a vertex $u$ in $\cN$.
\item For a given subset $X' \subseteq X$, decide if there is a vertex $u$ in $\cN$ with $C_\cN(u) = X'$ and in the case of existence also find all vertices with this property. 
\item For a given subset $X' \subseteq X$, find a vertex $u$ in $\cN$ with $X' \subset C_\cN(u)$ and no child $w$ of $u$ satisfies $X' \subset C_\cN(w)$. 
\end{enumerate}
Note that (i)--(iii) also apply to trees as every phylogenetic tree is a level-1 network.  
}

It remains to show that the running time of {\sc Construct Level-$1$ Network} is $O(|\cP|^2|X|^2)$. We first bound the number of arcs of a level-$1$ network by a function that only depends linearly on $|X|$. Since any level-$1$ network on $X$ is also tree child, it follows from~\cite{cardona09} that such a network has at most $|X|-1$ reticulations. Hence, as each level-$1$ network $\cN_i$ (resp. $\cN_i'$) with $i\in\{0,1,2,\ldots,k\}$ that is reconstructed in \Blue{Lines \ref{alg:start_construction}--\ref{alg:end}} of the algorithm has at most $k$ reticulations, it follows from~\cite[Lemma 2.1]{mcdiarmid15}  that $\cN_i$ (resp. $\cN_i'$) has at most $$3k+2|X|-2\leq 3(|X|-1)+2|X|-2=5|X|-5$$ arcs. \Green{Now, noting that each of \Green{Lines 1--5, 8, 15, 26--30 and 39--42} takes time $O(1)$, we next detail the running time of the remaining steps.}  

\begin{enumerate}[]
\item {\bf \Blue{Line 6.}} Let $\cT$ and $\cT'$ be two phylogenetic $X$-trees. As mentioned at the end of Section~\ref{sec:properties}, it can be checked in \Green{polynomial} time if $d_\rSPR(\cT,\cT')=1$. \Green{A straightforward way to implement this check is as follows. For each arc $(u,v)$ in $\cT$, let $X_v = C_{\cT}(v)$. Then check if there is a vertex $v'$ in $\cT'$ with $C_{\cT'}(v') = X_v$, $\cT| X_v \cong \cT'| X_v$ and $\cT| (X-X_v) \cong \cT'| (X-X_v)$. Since deciding if two phylogenetic trees are isomorphic can be done in $O(|X|)$, e.g. with the algorithm given by Gusfield~\cite{gusfield91}, the above can be implemented such that the check if $d_\rSPR(\cT,\cT')=1$ takes time $O(|X|^2)$ in total.} Hence, it takes the same time to compute $M(\cT,\cT')$ and time \Green{$O(|\cP|^2|X|^2)$} to reconstruct the rSPR graph $G$ of $\cP$ together with the set of moving subtrees for each edge.\\

\item {\bf \Blue{Line 7.}} Checking if $G$ is isomorphic to $Q_k$ takes time $O(|\cP|\log_2|\cP|)$~\cite{bhat80}.\\

\item {\bf \Blue{Lines 9--13.}} Given a moving subtree $X'$, it takes time $O(|X|)$ to compute the ordered pair $(X',Y')$ \Green{ and testing two such pairs for equality takes time $O(|X|\log |X|)$}.  Now, recall that $|E_i|=2^{k-1}$ for each $i\in\{1,2,\ldots,k\}$. Then, as the number of moving subtrees for two phylogenetic trees is at most three (see Proposition~\ref{p:one-or-three}), it takes time $O(2^{k-1}\Green{|X|\log |X|})$ to compute all ordered pairs that verify $E_i$ for each $i\in\{1,2,\ldots,k\}$. Hence, by Proposition~\ref{p:order-does-not-matter} it takes time $O(2^{k-1}k\Green{|X|\log |X|})$ to compute a sequence $$((X_1,Y_1),(X_2,Y_2),\ldots,(X_k,Y_k))$$ of ordered pairs that pairwise satisfy one of (I)--(III) in the definition of the nested subtree property if $G$ has the nested subtree property. With $k=\log_2|\cP|$, it follows that \Blue{Lines 9--13.} take time \Green{$O((|X|\log |X|)(|\cP|\log |\cP|))$}.\\

\item \Green{{\bf Line 14.} We can obtain the permutation in Line 14 by $O(k^2)$ comparisons that each involve checking if $Y_i\cap Y_j=\emptyset$ or $Y_i\subset Y_j$ in time $O(|X|\log |X|)$. If both of these checks fail, we swap the positions of $(X_i, Y_i)$ and $(X_j, Y_j)$. Hence, Line~14 takes time $O(k^2|X|\log |X|)$ which, with $k=\log_2|\cP|$, is $O((\log|\cP|)^2|X|\log |X|)$.}\\

\item {\bf \Blue{Lines 16--25.}} \Blue{Line 17} takes time \Green{$O(|X|\log |X|)$}, \Blue{Line 18} takes time $O(|\cP||X|)$, and \Green{Lines 19--25} \Green{take} time \Green{$O(k|X|\log |X|)$}. \Green{The outer loop is} executed $k$ times and, so, \Blue{Lines 16--25} take time \Green{$O(k|X|\log |X| +k|\cP||X|+k^2|X|\log |X|)$}. Since $k=\log_2 |\cP|$ that is \Green{$O((|X|\log |X|)(|\cP|\log |\cP|))$}.\\

\item {\bf Lines 29--40.} Each of Lines 31, 33, 34, 36 and 38 takes time $O(|X|)$, each of Lines 35 and 37 takes time $O(|X|\log |X|)$, and Line 32 takes time $O(|\cP||X|)$ since $Y_i^{i-1}$ is a cluster of $\cS$ and $\cT$ by construction and non-isomorphism between two phylogenetic trees can be checked in time $O(|X|)$~\cite{gusfield91}. The loop is executed $k$ times and, so, Lines 29--40 take time $O(k|\cP||X|\log |X|)$, that is again $O((|X|\log |X|)(|\cP|\log |\cP|))$.

\end{enumerate}
It now follows that \Blue{Line 6} is the most time-consuming step and {\sc Construct Level-$1$ Network} takes time \Green{$O(|\cP|^2|X|^2)$} as claimed. This completes the proof of the theorem.
\end{proof}

\noindent \Blue{We remark that Whidden and Matsen~\cite{whidden18} have shown that the rSPR graph for a collection $\cP$ of phylogenetic $X$-trees can be computed in time $O(|\cP||X|^2)$. If their result can be extended to not only deciding if $d_\rSPR(\cT,\cT')=1$ for any pair $\cT,\cT'\in\cP$ but, additionally, to compute the set $M(\cT,\cT')$ in the same time, then the running time of {\sc Construct Level-1~Network} can be improved further since Line 6 is the current bottleneck in the running time analysis (see the proof of Theorem~\ref{t:run-time}).}

The next corollary shows that we cannot only reconstruct a level-$1$ network $\cN$ for a collection $\cP$ of phylogenetic trees  such that $T(\cN)=\cP$ if such a network exists but, in fact, reconstruct all level-$1$ networks that have this property. Suppose that the rSPR graph $G$ of $\cP$ is isomorphic to $Q_k$ for some non-negative integer $k$. Then by iterating over all sequences of ordered pairs $((X_1,Y_1),(X_2,Y_2),\ldots,(X_k,Y_k))$ that verify the nested subtree property of $G$, the next corollary is a consequence of  Lemma~\ref{main2} and Theorem~\ref{t:run-time}.

\begin{corollary}
Let $G$ be the rSPR graph of a collection of phylogenetic $X$-trees such that $G\cong Q_k$ for some non-negative integer $k$. For each $E_i$ with $i\in\{1,2\ldots,k\}$, let $n_i$ the number of ordered pairs that verify $E_i$.  If $G$ has the nested subtree property, then there are $\Pi_{i=1}^k n_i$ level-$1$ networks on $X$ with no trivial reticulation whose display set is $\cP$. Moreover, each such network can be reconstructed in polynomial time.
\end{corollary}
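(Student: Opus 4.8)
The plan is to set up a bijection between the level-$1$ networks on $X$ with no trivial reticulation whose display set is $\cP$ (taken up to isomorphism) and the sequences of ordered pairs that verify the nested subtree property of $G$, and then to count the latter. Call a sequence $((X_1,Y_1),\ldots,(X_k,Y_k))$ \emph{valid} if, for each $i$, the pair $(X_i,Y_i)$ verifies $E_i$ and, for all distinct $i,j$, the pairs $(X_i,Y_i)$ and $(X_j,Y_j)$ satisfy one of (I)--(III). I would first count the valid sequences, then show each one yields exactly one network up to isomorphism, and finally that every target network arises from exactly one valid sequence. Note that by Observation~\ref{ob:essential} any such network has exactly $k$ reticulations, since $|\cP|=2^k$.

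For the count I would show there are exactly $\prod_{i=1}^k n_i$ valid sequences. As $G$ has the nested subtree property, at least one valid sequence exists; fix it as a seed. Given any choice of one ordered pair verifying $E_i$ for each $i$, I would reach the corresponding sequence from the seed by replacing its coordinates one at a time. Whenever $n_i>1$, every edge of $E_i$ must admit more than one, hence by Proposition~\ref{p:one-or-three} exactly three, moving subtrees, so Proposition~\ref{p:order-does-not-matter} applies and each single-coordinate replacement preserves the nested subtree property; after at most $k$ replacements the sequence is again valid. Conversely every valid sequence is obtained this way, so the valid sequences are in bijection with the $\prod_{i=1}^k n_i$ ways of choosing one of the $n_i$ verifying pairs for each $i$.

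For the bijection with networks I would define the forward map $\cN\mapsto O(\cN)$ via Lemma~\ref{main1}: the rSPR graph of such an $\cN$ is $G\cong Q_k$, and under a suitable isomorphism the bit edge subset associated with reticulation $v_i$ is labelled $(C(v_i),C(u_i))$, where $u_i$ is the source vertex of $v_i$. By Proposition~\ref{unique-subsets} the partition of $E(G)$ into $E_1,\ldots,E_k$ is independent of the isomorphism, so after ordering the reticulations to match $E_1,\ldots,E_k$ the pairs $(C(v_i),C(u_i))$ form a well-defined valid sequence. For the reverse map I would use the construction in the proof of Lemma~\ref{main2}, equivalently Lines~\ref{alg:start_construction}--\ref{alg:end} of {\sc Construct Level-$1$ Network}: fed a valid sequence it is deterministic, since cases (i) and (ii) are mutually exclusive, and it outputs a level-$1$ network with no trivial reticulation and display set $\cP$ in which the reticulation created at stage $i$ has cluster $X_i$ and its source vertex has cluster $Y_i$.

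The main obstacle will be verifying that these two maps are mutually inverse up to isomorphism. In one direction, because the reconstruction builds the $i$-th reticulation with $(C(v_i),C(u_i))=(X_i,Y_i)$, applying the forward map to a reconstructed network returns exactly the valid sequence it was built from; this makes the reverse map injective, so distinct valid sequences give pairwise non-isomorphic networks. In the other direction, feeding $O(\cN)$ back into the construction must recover $\cN$ up to isomorphism, which is precisely the compatibility between the inductive labelling of Lemma~\ref{main1} and the inductive construction of Lemma~\ref{main2}; this gives surjectivity onto the target networks. Combining the bijection with the count yields exactly $\prod_{i=1}^k n_i$ networks. For the complexity claim, each individual network is reconstructed in polynomial time by Theorem~\ref{t:run-time}; moreover, since any single edge of $G$ admits at most three moving subtrees by Proposition~\ref{p:one-or-three}, we have $n_i\le 3$ for every $i$, so the total number $\prod_{i=1}^k n_i\le 3^k=|\cP|^{\log_2 3}$ of networks is polynomial in $|\cP|$, and all of them can be enumerated and reconstructed in polynomial time.
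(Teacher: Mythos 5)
Your proposal is correct and follows the same route as the paper, which disposes of this corollary in a single sentence: iterate over all sequences of ordered pairs verifying the nested subtree property, obtain one network per sequence from the construction in Lemma~\ref{main2}, and appeal to Theorem~\ref{t:run-time} for the running time, with Proposition~\ref{p:order-does-not-matter} (together with Proposition~\ref{p:one-or-three}) supplying exactly the single-coordinate replacement argument you use to count the verifying sequences as $\prod_{i=1}^k n_i$. The extra care you take over the bijection itself --- well-definedness via Proposition~\ref{unique-subsets}, injectivity via recovering the sequence from the constructed network, and surjectivity via Lemma~\ref{main1} --- goes beyond what the paper writes down (it leaves these points implicit), and the one step you flag but do not fully discharge, namely that feeding $O(\cN)$ back into the construction returns $\cN$ up to isomorphism, is likewise left unverified by the paper.
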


\noindent {\bf Acknowledgements.} We thank the referee for their constructive comments.


\begin{thebibliography}{99}

\bibitem{allen01}
Allen, B. L., Steel, M. (2001). Subtree transfer operations and their induced metrics on evolutionary trees. Annals of Combinatorics, 5:1--15.

\bibitem{allman19}
Allman, E., Ba\~{n}os, H., Rhodes, J. A. (2019). NANUQ: a method for inferring species networks from gene trees under the coalescent model. Algorithms for Molecular Biology, 14:24.

\bibitem{asano12}
\Blue{Asano, T., Jansson, J., Sadakane, K., Uehara, R., Valiente, G. (2012). Faster computation of the Robinson–Foulds distance between phylogenetic networks. Information Sciences, 197:77--90.}

\bibitem{baroni05}
Baroni, M., Gr\"unewald, S., Moulton, V.,  Semple, C. (2005). Bounding the number of hybridisation events for a consistent evolutionary history. Journal of Mathematical Biology, 51:171--182.

\bibitem{barrat22}
Barrat-Charlaix, P., Vaughan, T., Neher, R. A. (2022). TreeKnit: Inferring ancestral reassortment graphs of influenza viruses. PLoS Computational Biology, 18:e1010394. 

\bibitem{bhat80}
Bhat, K. V. S. (1980). On the complexity of testing a graph for $n$-cube. Information Processing Letters, 11:16--19.

\bibitem{bordewich05}
Bordewich, M., Semple, C. (2005). On the computational complexity of the rooted subtree prune and regraft distance. Annals of Combinatorics, 8:409--423. 

\bibitem{cardona09}
Cardona, G., Rossello, F., Valiente, G. (2009). Comparison of tree-child phylogenetic networks. IEEE/ACM Transactions on Computational Biology and Bioinformatics 6:552--569.

\bibitem{cormen09}
\Blue{Cormen, T. H., Leiserson, C. E., Rivest, R. L., Stein, C. (2009). Introduction to Algorithms, 3rd edition. The MIT Press.}

\bibitem{day85}
\Blue{Day, W. H. (1985). Optimal algorithms for comparing trees with labeled leaves. Journal of Classification, 2:7--28.  }

\bibitem{francis15}
Francis, A., Steel, M. (2015). Which phylogenetic networks are merely trees with additional arcs? Systematic Biology, 64:768--777.

\bibitem{goloboff08}
Goloboff, P. A. (2008). Calculating SPR distance between trees. Cladistics, 24:591--597.

\bibitem{gordon13}
Gordon, K., Ford, E., St. John, K. (2013). Hamiltonian walks of phylogenetic treespaces. IEEE/ACM Transactions on Computational Biology and Bioinformatics, 10:1076--1079.

\bibitem{gray53}
Gray, F. (1953). Pulse code communication. U.S. Patent 2,632,058.

\bibitem{grimaldi}
\Blue{R. P. Grimaldi (2003). Discrete and Combinatorial Mathematics: An Applied Introduction, 5th edition. Pearson, Addison Wesley.}

\bibitem{gusfield91}
Gusfield, D. (1991). Efficient algorithms for inferring evolutionary trees. Networks, 21:19--28.

\bibitem{hein96}
Hein, J., Jiang, T., Wang, L., Zhang, K. (1996). On the complexity of comparing evolutionary trees. Discrete Applied Mathematics, 71:153--169.

\bibitem{huson12}
Huson, D., Scornavacca, C. (2012). Dendroscope 3: an interactive tool for rooted phylogenetic trees and networks. Systematic Biology, 61:1061--1067.

\bibitem{huynh05}
T. N. D. Huynh, J. Jansson, N. B. Nguyen, W.-K. Sung. Constructing a smallest refining galled phylogenetic network. In: RECOMB 2005, Lecture Notes in Bioinformatics 3500, pp. 265--280.

\bibitem{iersel10}
van Iersel, L., Semple, C.,  Steel, M. (2010). Locating a tree in a phylogenetic network. Information Processing Letters, 110:1037--1043. 

\bibitem{iersel22}
van Iersel, L., Janssen, R., Jones, M., Murakami, Y., Zeh, N. (2022). A practical fixed-parameter algorithm for constructing tree-child networks from multiple binary trees, Algorithmica, 84:917--960.

\bibitem{kong}
Kong, S., Pons, J. C., Kubatko, L.,  Wicke, K. (2022). Classes of explicit phylogenetic networks and their biological and mathematical significance. Journal of Mathematical Biology, 84:47.

\bibitem{laborde82}
Laborde, J.-M., Hebbare, S. P. R. (1982). Another characterization of hypercubes. Discrete Mathematics, 39:161--166

\bibitem{linz}
Linz, S., Semple, C. (2022). Non-essential arcs in phylogenetic networks. Journal of Computer and System Sciences, 128:1--17.

\bibitem{mcdiarmid15}
McDiarmid, C., Semple, C., Welsh D. (2015). Counting phylogenetic networks. Annals of Combinatorics 19:205--224.

\bibitem{muetze22}
M\"utze, T. (2022). Combinatorial Gray codes---an updated survey, arXiv:2202.01280.

\bibitem{robinson71}
Robinson, D. F. (1971). Comparison of leaf labeled trees with valency three. Journal of Combinatorial Theory, 11:105--119.

\bibitem{stjohn17}
St. John, K. (2017). The shape of phylogenetic treespace. {\it Systematic Biology}, 66:e83--e94.

\bibitem{semple03}
Semple, C., Steel, M. (2003). Phylogenetics. Oxford University Press.

\bibitem{simpson}
Simpson, J. R. (2019). Tree structure in phylogenetic networks. PhD thesis. University of Canterbury.

\bibitem{solis16}
Sol\'is-Lemus, C., An\'e, C. (2016). Inferring phylogenetic networks with maximum pseudolikelihood under incomplete lineage sorting. PLoS Genetics, 12:e1005896.

\bibitem{whidden18}
\Blue{Whidden, C., Matsen, F. A. (2018). Efficiently inferring pairwise subtree prune-and-regraft adjacencies between phylogenetic trees. In: Proceedings of the Fifteenth Workshop on Analytic Algorithmics and Combinatorics, pp. 77--91.}

\bibitem{willson12}
Willson, S. J. (2012). Tree-average distances on certain phylogenetic networks have their weights uniquely determined. Algorithms for Molecular Biology, 7:13. 

\bibitem{wu10}
Wu, Y. (2010). Close lower and upper bounds for the minimum reticulate network of multiple phylogenetic trees. Bioinformatics, 26:i140--i148

\bibitem{zhang16}
L. Zhang (2016). On tree-based phylogenetic networks (2016). Journal of Computational Biology, 23:553--565.

\end{thebibliography}
\end{document}